\newtheorem{thm}{Theorem}[section]
\newtheorem{cor}[thm]{Corollary}
\newtheorem{lem}[thm]{Lemma}
\newtheorem{prop}[thm]{Proposition}
\newtheorem{conj}[thm]{Conjecture}
\theoremstyle{definition}
\newtheorem{defn}[thm]{Definition}
\theoremstyle{remark}
\newtheorem{rem}[thm]{Remark}
\newtheorem{exam}[thm]{Example}
\numberwithin{equation}{section}
\newcommand{\norm}[1]{\left\Vert#1\right\Vert}
\newcommand{\abs}[1]{\left\vert#1\right\vert}
\newcommand{\set}[1]{\left\{#1\right\}}
\newcommand{\Real}{\mathbb R}
\newcommand{\eps}{\varepsilon}
\newcommand{\cali}[1]{\mathscr{#1}}
\newcommand{\cplx}{\mathbb{C}}
\newcommand{\proj}{\mathbb{P}}
\newcommand{\V}{\cali{V}}
\newcommand{\C}{\cali{C}}
\newcommand{\Aut}{\mathrm{Aut}}
\newcommand{\agl}[1]{\langle{#1}\rangle}
\newcommand{\dist}{\mathrm{dist}}
\newcommand{\dsh}{\mathrm{DSH}}
\newcommand{\id}{\mathrm{id}}
\newcommand{\supp}{\mathrm{supp}}
\newcommand{\multi}{\mathrm{multi}}
\newcommand{\codim}{\mathrm{codim}}
\begin{document}

\title[Equidistribution for holomorphic endomorphisms of $\proj^k$]{Equidistribution in higher codimension \protect\\ for holomorphic endomorphisms of $\proj^k$}%
\author{Taeyong Ahn}%

\thanks{The research of the author was supported in part by SRC-GaiA (Center for Geometry and its Applications), the Grant 2011-0030044 from The Ministry of Education, The Republic of Korea.}
\address{Center for Geometry and its Applications and Department of Mathematics, 
POSTECH, Pohang City 790-784, The Republic of Korea}%
\email{triumph@postech.ac.kr}%

\subjclass[2010]{37F10, 32H50, 32U40}%
\keywords{Green current, equidistribution, exceptional set, super-potential}%

\date{March 12, 2014}%
\begin{abstract}
In this paper, we discuss the equidistribution phenomena for holomorphic endomorphisms over $\proj^k$ in the case of bidegree $(p,p)$ with $1\leq p\leq k$, in particular, $1<p<k$. We prove that if $f:\proj^k\to\proj^k$ is a holomorphic endomorphism of degree $d\geq 2$ and $T^p$ denotes the Green $(p,p)$-current associated with $f$, then there exists a proper invariant analytic subset $E$ for $f$ such that $d^{-pn}(f^n)^*(S)$ converges to $T^p$ exponentially fast in the current sense for every positive closed $(p,p)$-current $S$ of mass $1$ which is smooth on $E$.
\end{abstract}
\maketitle

\section{Introduction}
In 1965, Hans Brolin proved the following theorem about the distribution of preimages of points for polynomial maps in one variable in ~\cite{HansBrolin}.
\begin{thm}\label{thm:brolin}
Let $f(z)=z^d+...$ be a given polynomial of degree $d\geq 2$. Then, there exists a subset $\cali{E}\subset\cplx$ such that $\sharp\cali{E}\leq 1$ such that if $a\in\cplx\setminus\cali{E}$, then
\begin{displaymath}
\frac{1}{d^n}\sum_{f^n(z)=a}\delta_z\to\mu\textrm{ as }n\to\infty
\end{displaymath}
where $\mu$ is a harmonic measure on the filled Julia set of $f$. The limit is independent of the choice of $a\in\cplx\setminus\cali{E}$. The exceptional set $\cali{E}=\emptyset$ unless $f$ is affinely conjugate to $z\to z^d$. In this case, the set $\cali{E}=\set{0}$ is totally invariant.
\end{thm}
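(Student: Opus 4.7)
The strategy is to convert the statement about equidistribution of preimages into convergence of logarithmic potentials, exploiting the fact that the Laplacian of $d^{-n}\log\abs{f^n - a}$ is exactly the preimage measure $\nu_n^a := d^{-n}\sum_{f^n z = a}\delta_z$. Composition with $f$ on potentials, after the normalization $d^{-1}$, is a contraction toward a unique fixed point, the Green function of $f$.

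First I would construct the Green function. Since $f(z) = z^d + \cdots$ is monic, there exist $R, c>0$ with $\abs{f(z)}\geq c\abs{z}^d$ for $\abs{z}\geq R$, so the sequence $G_n(z) := d^{-n}\log^+\abs{f^n(z)}$ is Cauchy uniformly on compact subsets of $\cplx$. The limit $G$ is a continuous nonnegative subharmonic function, vanishing exactly on the filled Julia set $K$, with $G(z) = \log\abs{z} + O(1)$ at infinity and the invariance $G\circ f = d\cdot G$. The probability measure $\mu := (2\pi)^{-1}\Delta G$ supported on $\partial K$ is the candidate limit. Next, setting $u_n^a(z) := d^{-n}\log\abs{f^n(z) - a}$, Poincar\'e--Lelong applied to the monic polynomial $f^n - a$ of degree $d^n$ gives
\begin{displaymath}
\frac{1}{2\pi}\Delta u_n^a \;=\; d^{-n}\sum_{f^n(z) = a}\delta_z \;=\; \nu_n^a,
\end{displaymath}
so $\nu_n^a\to\mu$ weakly as soon as $u_n^a\to G$ in $L^1_{loc}(\cplx)$.

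To establish the latter I would first note that $u_n^a\to G$ pointwise on $\cplx\setminus K$, because $\abs{f^n(z)}\to\infty$ there and $u_n^a(z) = d^{-n}\log\abs{f^n(z)} + d^{-n}\log\abs{1 - a/f^n(z)} \to G(z)$. The bound $u_n^a\leq d^{-n}\log(\abs{f^n(z)}+\abs{a})$ gives $\limsup u_n^a\leq G$ locally uniformly, so the family is locally uniformly bounded above. By standard compactness of subharmonic functions, every subsequence has a further $L^1_{loc}$ limit $u^*\leq G$ with $u^* = G$ on $\cplx\setminus K$. The exact functional relation $u_{n+1}^a = d^{-1}(u_n^a\circ f)$ forces the defect $G - u^*$ to be invariant in the sense $G - u^* = d^{-1}((G - u^*)\circ f)$ and to be supported on the grand backward orbit $\bigcup_{k\geq 0}f^{-k}(a)$. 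Positivity and the fact that $G-u^*$ is a difference of locally integrable subharmonic functions then force this grand orbit to be a finite set $\cali{E}_a$ whenever the convergence fails.

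Finally I would classify the exceptional points. If $E\subset\cplx$ is finite with $f^{-1}(E) = E$, then as sets $\abs{E} = \abs{f^{-1}(E)}$ while counting with multiplicity the total preimage count is $d\abs{E}$; the two agree only if every $a\in E$ satisfies $f(z) - a = (z-b)^d$ for some $b\in E$. Subtracting two such identities for distinct $a_1,a_2\in E$ yields $(z-b_1)^d - (z-b_2)^d \equiv $ const, which forces $b_1 = b_2$ and hence $a_1 = a_2$. Therefore $\abs{E}\leq 1$, and equality forces $f$ to be affinely conjugate to $z\mapsto z^d$ with $E = \set{0}$ in the normalized coordinates. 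The main obstacle in the plan is the rigorous deduction that $u^*\ne G$ implies a finite grand orbit: this requires tracking how the logarithmic mass distributes among the $d^n$ preimages of $a$ and is the real content of the theorem beyond the soft potential-theoretic compactness used for the convergence itself.
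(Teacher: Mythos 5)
First, a point of order: the paper does not prove Theorem \ref{thm:brolin}. It is quoted from Brolin \cite{HansBrolin} as classical motivation, so there is no internal proof to compare yours against; I can only assess the proposal on its own terms. The framework you choose is the standard potential-theoretic one, and the peripheral steps are sound: the construction of $G=\lim d^{-n}\log^+\abs{f^n}$ and of $\mu=(2\pi)^{-1}\Delta G$, the identity $\frac{1}{2\pi}\Delta u_n^a=\nu_n^a$ via Poincar\'e--Lelong, the reduction of weak convergence of $\nu_n^a$ to $L^1_{loc}$ convergence of $u_n^a$, and the classification of finite totally invariant sets (the computation $(z-b_1)^d-(z-b_2)^d=\mathrm{const}\Rightarrow b_1=b_2$ is correct and does give $\sharp\cali{E}\leq 1$ together with the conjugacy to $z\mapsto z^d$).

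The genuine gap is the one you flag yourself, and it is not a technicality --- it is the theorem. From $u^*\leq G$ with equality on $\cplx\setminus K$ you cannot conclude that the defect $G-u^*$ is ``supported on the grand backward orbit of $a$'': the set $\set{u^*<G}$ is only constrained to lie in $K$ (which may well have positive Lebesgue measure), and for $z\in K$ whose forward orbit accumulates on $a$ without ever hitting it, nothing established so far prevents $d^{-n}\log\abs{f^n(z)-a}$ from staying bounded away from $0$; the exceptional case $f(z)=z^d$, $a=0$, $\abs{f^n(z)}=\abs{z}^{d^n}$ shows that such persistent decay genuinely occurs and must be excluded by an argument, not by positivity alone. (The invariance step is also shaky as written: $u_{n_j}^a\to u^*$ gives information about the subsequence $u_{n_j+1}^a$, not about $u_{n_j}^a$ again, so extracting $G-u^*=d^{-1}((G-u^*)\circ f)$ requires an additional diagonal or monotonicity argument.) Ruling out slow escape of orbits from small neighborhoods of $a$ is exactly where Brolin's capacity estimates, Lyubich's normal-families argument, or Lojasiewicz-type control of the distance to the exceptional orbit must enter; it is, not coincidentally, the same difficulty whose higher-codimension analogue this paper spends Lemma \ref{lem:lojasiewicz} and Sections \ref{sec:analyticcocycle} and \ref{sec:1}--\ref{sec:4} resolving. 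Until that step is supplied, the proposal proves only $\limsup_n u_n^a\leq G$ everywhere and $u_n^a\to G$ off $K$, which does not yield $\nu_n^a\to\mu$.
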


Such convergence towards a unique invariant probability measure or current is called \emph{equidistribution}.
In the study of the dynamics of $f$, ergodic theory plays an important role. Here, it is crucial to have a dynamically interesting invariant probability measure for $f$. Equidistribution provides a way to construct such an invariant probability measure. 
Also, the invariant probability measure $\mu$ for $f$ is useful in studying the Julia set of $f$.

Theorem ~\ref{thm:brolin} generalizes to more general cases. 
Lyubich\cite{Lyubich}, and Freire-Lopes-Ma\~{n}\'{e}\cite{FLM} independently studied the case of the rational maps of the Riemann sphere $\proj^1$ with $\sharp\cali{E}\leq 2$.

Many authors contributed to the study of the higher dimensional case. The Dirac measure $\delta_z$ generalizes to a positive closed current. Dinh-Sibony\cite{equidistributionspeed} completed the measure case. See also Briend-Duval\cite{BD} and Forn{\ae}ss-Sibony\cite{cplxpotentialtheory}. The case of bidegree $(1,1)$ has been also well investigated. 
The case of $\proj^2$ was finished by Favre-Jonsson\cite{brolinthm}\cite{eigenvaluations}(see also \cite{Guedj}). In the general higher dimensional case, see Dinh-Sibony\cite{equidist_holo}, Forn{\ae}ss-Sibony\cite{cplxdynII}, Guedj\cite{Guedj}, Russakovskii-Shiffman\cite{RS} and Sibony\cite{sibony}. For recent developments, see Parra\cite{Parra} and Taflin\cite{Taflin}.\\ 

However, the intermediate bidegree case, i.e., the case of bidegree $(p,p)$ with $1<p<$ the dimension of the space, does not seem to have been investigated beyond the following theorem (for the definition of $\cali{C}_p$ and the Green $(p, p)$-current $T^p$ associated with $f$, see Section \ref{sec:currents}).

\begin{thm}[See Theorem 5.4.4 in \cite{superpotentials}]\label{thm:544}
Let $\cali{H}_d(\proj^k)$ denote the set of holomorphic endomorphisms of degree $d\geq 2$ on $\proj^k$.
There is a Zariski dense open set $\cali{H}_d^*(\proj^k)$ in $\cali{H}_d(\proj^k)$ such that, if $f$ is in $\cali{H}_d^*(\proj^k)$, then $d^{-pn}(f^n)^*(S)$ converges to $T^p$ uniformly with respect to $S\in\cali{C}_p$. In particular, for $f$ in $\cali{H}_d^*(\proj^k)$, $T^p$ is the unique current in $\cali{C}_p$ which is $f^*$-invariant.
\end{thm}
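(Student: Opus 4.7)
The plan is to lift the convergence from currents to their super-potentials, replacing weak convergence in $\cali{C}_p$ by uniform convergence of functions on the compact set $\cali{C}_{k-p+1}$. For $S \in \cali{C}_p$, the super-potential $\cali{U}_S$ is an upper semi-continuous function on $\cali{C}_{k-p+1}$, built from a quasi-potential $U_S$ of $S$ (with $dd^c U_S = S - \omega^{p}$, where $\omega$ is the Fubini--Study form) by $\cali{U}_S(R) := \langle U_S, R\rangle$, suitably normalized so that $\cali{U}_S(\omega^{k-p+1}) = 0$. Control of super-potentials captures weak convergence of currents together with the required regularity, so it suffices to establish, at an exponential rate, the uniform convergence $\cali{U}_{S_n} \to \cali{U}_{T^p}$ on $\cali{C}_{k-p+1}$, where $S_n := d^{-pn}(f^n)^* S$.

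The duality $\langle f^* S, R\rangle = \langle S, f_* R\rangle$ together with the mass identities $\|f^* S\| = d^p\|S\|$ for $S\in \cali{C}_p$ and $\|f_* R\| = d^{p-1}\|R\|$ for $R \in \cali{C}_{k-p+1}$ translates, after fixing the normalizing constants, into the functional equation
\[
\cali{U}_{d^{-p}f^* S}(R) \;=\; d^{-1}\,\cali{U}_S(\Lambda R) + g(R),
\]
where $\Lambda := d^{-(p-1)}f_*$ preserves $\cali{C}_{k-p+1}$ and $g$ is a continuous function on $\cali{C}_{k-p+1}$ depending only on $f$. Because $T^p$ is a fixed point of $d^{-p}f^*$, its super-potential satisfies the same identity, and subtraction eliminates $g$. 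Iterating yields the key formula
\[
\cali{U}_{S_n}(R) - \cali{U}_{T^p}(R) \;=\; d^{-n}\bigl(\cali{U}_S - \cali{U}_{T^p}\bigr)(\Lambda^n R).
\]
The explicit factor $d^{-n}$ is the expected exponential decay rate; the theorem then reduces to showing that $(\cali{U}_S - \cali{U}_{T^p})(\Lambda^n R)$ is uniformly bounded as $(S, R, n)$ varies over $\cali{C}_p \times \cali{C}_{k-p+1} \times \mathbb{N}$.

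The uniform upper bound is standard, from compactness of $\cali{C}_{k-p+1}$ and upper semi-continuity of super-potentials. The delicate point, and the main obstacle, is a uniform lower bound: super-potentials can take the value $-\infty$ on pluripolar subsets of $\cali{C}_{k-p+1}$, and one must show that the iterates $\Lambda^n R$ stay uniformly away from such polar loci, uniformly in $(S, R)$. This is precisely the content of the Zariski-open condition defining $\cali{H}_d^*(\proj^k)$: for $f$ in this open set, $\Lambda^n R \to T^{k-p+1}$ uniformly on $\cali{C}_{k-p+1}$, with no invariant algebraic obstruction of codimension $k-p+1$ preventing the contraction. Combining this uniform convergence with the continuity of $S \mapsto \cali{U}_S(T^{k-p+1})$ on the compact set $\cali{C}_p$ furnishes the required lower bound, and uniform exponential convergence of $d^{-pn}(f^n)^*(S)$ to $T^p$ follows. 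Uniqueness of the $f^*$-invariant current in $\cali{C}_p$ is immediate, since any such element would be a fixed point of the iteration whose unique limit is $T^p$.
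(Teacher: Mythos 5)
Your overall architecture --- encoding the convergence in super-potentials, deriving the functional equation with the factor $d^{-n}$, and reducing to a two-sided uniform bound on $(\cali{U}_S - \cali{U}_{T^p})(\Lambda^n R)$ --- is on the right track and is close to the Dinh--Sibony argument being cited. (Note, though, that it differs from the present paper's route, which does not reprove Theorem~\ref{thm:544} from scratch but obtains it as a specialization of Theorem~\ref{thm:mainthm}: for $f\in\cali{H}_d^*(\proj^k)$ the multiplicity condition of Lemma~5.4.5 in \cite{superpotentials} forces the exceptional set $E$ to be empty, so every $S\in\cali{C}_p$ is trivially smooth on $E$ and the main theorem applies.)

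The gap is in your last step. You invoke ``$\Lambda^n R \to T^{k-p+1}$ uniformly on $\cali{C}_{k-p+1}$'' --- itself a pushforward equidistribution statement of the same order of difficulty as the theorem, and not a free consequence of the Zariski-open condition --- and then deduce a uniform lower bound on $\cali{U}_S(\Lambda^n R)$ from the continuity of $S\mapsto\cali{U}_S(T^{k-p+1})$. This inference is not valid: for a general $S\in\cali{C}_p$ the super-potential $\cali{U}_S$ is only upper semicontinuous on $\cali{C}_{k-p+1}$, so $dist_1$-proximity of $\Lambda^n R$ to $T^{k-p+1}$ gives only $\limsup_n\cali{U}_S(\Lambda^n R)\le\cali{U}_S(T^{k-p+1})$, i.e.\ the wrong inequality. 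The H\"older regularity available is that of the super-potential of $T^{k-p+1}$ (Theorem~\ref{thm:541}), not of the arbitrary current $S$, so it cannot be used to lower-bound $\cali{U}_S$ at nearby points. What the multiplicity condition actually buys is a quantitative Lojasiewicz-type control of the quasi-potentials near the critical value set and near the exceptional locus; one then regularizes $R$ at each step (the $R_{n,i}$ in this paper, or their analogues in \cite{superpotentials}) and bounds the telescoping error terms directly, rather than appealing to weak convergence of $\Lambda^n R$. Without that estimate the lower bound does not close.
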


The purpose of this paper is to generalize Theorem ~\ref{thm:544} by proving the following theorem.
\begin{thm}\label{thm:mainthm}
Let $f:\proj^k\to\proj^k$ be a holomorphic endomorphism of degree $d\geq 2$. Let $T^p$ denote the Green $(p, p)$-current associated with $f$ on $\proj^k$. Then, there is a proper invariant analytic subset $E$ for $f$ such that 
$d^{-pn}(f^n)^*(S)$ converges to $T^p$
exponentially fast in the current sense for every $S\in\cali{C}_p$ smooth on $E$.
\end{thm}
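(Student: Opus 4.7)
The plan is to work within Dinh--Sibony's super-potential formalism. Recall that the super-potential $\mathcal{U}_R$ of a positive closed $(p,p)$-current $R$ of mass $1$ is a function on a dense subspace of $\cali{C}_{k-p+1}$, and that convergence $R_n \to R$ in the current sense is equivalent to a suitable convergence $\mathcal{U}_{R_n} \to \mathcal{U}_R$, with exponential speed corresponding to exponential decay of the difference evaluated on each smooth test current. The dynamics is transferred to super-potentials via the identity
\begin{displaymath}
\mathcal{U}_{d^{-p}f^*R}(Q) = \mathcal{U}_R(\Lambda Q) + c_Q, \qquad \Lambda := d^{-(p-1)} f_*,
\end{displaymath}
and since $T^p$ is $d^{-p}f^*$-invariant, Theorem~\ref{thm:mainthm} reduces to an exponential bound
\begin{displaymath}
\bigl|\mathcal{U}_{S-T^p}(\Lambda^n Q)\bigr| \leq C(Q)\lambda^n
\end{displaymath}
for some $\lambda \in (0,1)$ and every smooth $Q \in \cali{C}_{k-p+1}$.

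The first step is the construction of the invariant analytic set $E$. Following the strategy of \cite{equidistributionspeed} in the measure case and of \cite{cplxdynII} in bidegree $(1,1)$, I would take $E$ to be the smallest proper analytic subset of $\proj^k$ absorbing the failure of contraction of $\Lambda$: start from a maximal proper analytic subset whose $f$-preimages accumulate mass away from $T^{k-p+1}$, saturate under $f$, and invoke Siu's theorem to keep the result analytic. By construction $E$ is $f$-invariant, and outside $E$ the iterates $\Lambda^n$ act as strict contractions, in the super-potential sense, on test currents in $\cali{C}_{k-p+1}$ that put no mass on $E$.

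To exploit the smoothness of $S$ on $E$, I would approximate $S$ using a smooth plurisubharmonic cut-off associated with a shrinking neighborhood $V_\eta$ of $E$, splitting $S = S_\eta^{\mathrm{sm}} + S_\eta^{\mathrm{er}}$, where $S_\eta^{\mathrm{sm}}$ is a globally smooth positive closed $(p,p)$-current --- this is where the hypothesis that $S$ be smooth on $E$ enters, since it allows the cut-off to be performed without introducing new singularities of $S$ itself --- and $S_\eta^{\mathrm{er}}$ is a correction supported in $V_\eta$ with small mass. For the smooth part the exponential convergence follows from a standard $dd^c$-estimate on $\proj^k$. The error term $\mathcal{U}_{S_\eta^{\mathrm{er}}}(\Lambda^n Q)$ is controlled, after Hartogs-type estimates for super-potentials, by the mass of $\Lambda^n Q$ inside $V_\eta$, which decays exponentially once $\eta$ is balanced against $n$ using the contraction property of $\Lambda$ outside $E$.

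The main obstacle will be the construction of $E$ with the required analyticity, invariance, and absorption properties, together with the quantification of the contraction of $\Lambda^n$ outside $E$. Since the dual problem lives again in intermediate bidegree, one cannot close the loop by duality; instead, one must carry out an intrinsic analysis of $\Lambda$ via a negative Sobolev-type norm on super-potentials as set up in \cite{superpotentials}, and verify that $\Lambda$ is a strict contraction on the appropriate subspace modulo $E$. Quantifying the mass of $\Lambda^n Q$ near $E$ and simultaneously optimising the regularization parameter $\eta$ in $n$ is where the delicate work will lie.
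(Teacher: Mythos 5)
Your general framework---reformulating the theorem via super-potentials and the identity $\mathcal{U}_{d^{-p}f^*R}(Q) = \mathcal{U}_R(\Lambda Q) + \text{const}$---is the same as the paper's, but both of your two key ingredients diverge from the paper and, as stated, have genuine gaps.

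\textbf{Construction of $E$.} The paper builds $E$ concretely from analytic multiplicative cocycles in the sense of Dinh (Section~\ref{sec:analyticcocycle}): one tracks the local multiplicity $\kappa_{-n}$ of $f^n$ along backward orbits and the multiplicity $\iota_{-n}$ of the critical-value hypersurface of $f^n$, passes to the limits $\kappa_-$ and $\iota_-$, and takes $E = E_\lambda \cup E'_\mu$ as upper level sets. The analyticity and $f$-invariance of $E$ come directly from Theorem~\ref{thm:kappa-}. Your description---``the smallest proper analytic subset absorbing the failure of contraction of $\Lambda$, saturate under $f$, invoke Siu''---is not a construction; it presupposes that such a minimal analytic set exists and is proper, which is precisely what has to be proved, and Siu's theorem (analyticity of Lelong level sets of a fixed positive closed current) does not apply to a dynamically defined ``absorption'' condition. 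You need the cocycle machinery, or an equivalent concrete multiplicity-tracking device, to get an $E$ with the properties you want; nothing in your sketch supplies it.

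\textbf{The decomposition $S = S_\eta^{\mathrm{sm}} + S_\eta^{\mathrm{er}}$.} This step would fail as described: multiplying a positive closed $(p,p)$-current by a cut-off function destroys closedness, and there is no general way to extract from $S$ a globally smooth positive closed current $S_\eta^{\mathrm{sm}}$ of nearly full mass plus a positive closed error term of small mass supported near $E$. The paper deliberately avoids decomposing $S$ at all. Instead it fixes $S$ and telescopes the dynamical super-potential
\begin{displaymath}
\cali{V}_{L^n(S)}(R) = \sum_{i=1}^{n} d^{-i}\agl{V_{n-i}, \Lambda(R_{n,i-1}) - R_{n,i}}_{\proj^k\setminus V} + d^{-n}\cali{V}_S(R_{n,n}),
\end{displaymath}
where $R_{n,i}$ is the $\eps_{n,i}$-regularization of $\Lambda(R_{n,i-1})$. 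The smoothness hypothesis on $S$ near $E$ is then used only to gain $\cali{C}^1$ control of the Green quasi-potentials $U_{n-i}$ of $L^{n-i}(S)$ on shrinking neighborhoods of $E$ (Lemma~\ref{lem:derivativeWi1}), which requires $L^{n-i}(S)$ to be smooth there, and this in turn uses the $f$-invariance of $E$. This is a quite different mechanism from ``cut off $S$ and apply a $dd^c$-estimate to the smooth part.'' The heart of the paper is the splitting of $\proj^k$ into three regions at scale $(s_{n,i}, t_{n,i})$ (near $E$, near $V$ but away from $E$, and away from both), with the Lojasiewicz-type Lemma~\ref{lem:lojasiewicz} controlling a quasi-potential of $[V]$ in terms of $\log\mathrm{dist}(\cdot, V)$ and $\log\mathrm{dist}(\cdot, E)$ weighted by the multiplicity bound $\delta$; none of this appears in your plan, and it is exactly the hard part you flag vaguely as ``quantifying the contraction of $\Lambda^n$ outside $E$.''
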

Here, for $S$ in $\cali{C}_p$, we say that $S$ is smooth on $E$ if there exists a neighborhood of $E$ on which the restriction of $S$ can be represented by a smooth $(p, p)$-form.
 

The major difficulty of Theorem ~\ref{thm:mainthm} is finding good localizations of the set $E$ as in Lemma \ref{lem:cut-off}. The set $E$ is obtained from the work of Dinh in ~\cite{analyticmulticocycle}. Roughly speaking, $E$ can be understood as a high multiplicity set invariant under $f$. In order to handle this difficulty, we approximate a quasi-potential of the current of integration on the hypersurface $V$ of the critical values of $f$ in terms of multiplicities and the distances to $V$ and $E$ in Lemma ~\ref{lem:lojasiewicz}. We use Lojasiewicz type inequalities for this approximation. There, we adapt the idea used in \cite{equidistributionspeed}. Lemma ~\ref{lem:lojasiewicz} intuitively means that if a point near $V$ is relatively far from a high-multiplicity set, then the effect of the high multiplicity set is not serious. It is reflected in the coefficient $\delta$ of $\log \dist(\cdot, V)$ in the inequality of Lemma ~\ref{lem:lojasiewicz}. Note that this difficulty does not appear in Theorem \ref{thm:544} since a global multiplicity condition is assumed in Lemma 5.4.5 in ~\cite{superpotentials}.

In addition to the major difficulty, we also have two obstacles in the intermediate bidegree case: lack of good potential/pluripotential theory and lack of good singularity theory in the higher codimensional case, such as the concept of the Lelong number for the case of bidegree $(1, 1)$. Indeed, the main ingredients in the case of bidegree $(1,1)$ are pluripotential theory and the Lelong number.
In Theorem ~\ref{thm:544}, the first obstacle was resolved by super-potentials introduced in \cite{superpotentials} by Dinh-Sibony. However, for the second obstacle, we still do not have a successful theory in the intermediate bidegree case by far. Instead, Dinh-Sibony approximate higher bidegree objects by objects of bidegree $(1,1)$. We can understand this approximation by $f_*(\omega^p)\leq (f_*\omega)^p$, where $\omega$ is the standard Fubini-Study form. For these two difficulties, we followed the strategy of Theorem ~\ref{thm:544} in ~\cite{superpotentials}: super-potentials and the current inequality $f_*(\omega^p)\leq (f_*\omega)^p$.

As a corollary, we obtain
\begin{cor}\label{cor:544tomain}
Theorem ~\ref{thm:mainthm} implies Theorem ~\ref{thm:544}. In particular, the set $E$ is generically empty.
\end{cor}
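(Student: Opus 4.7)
The plan is to split the corollary into two steps: (i) show that the exceptional set $E$ produced by Theorem~\ref{thm:mainthm} is empty whenever $f$ lies in the Zariski dense open subset $\cali{H}_d^*(\proj^k)$, and (ii) check that, once $E=\emptyset$, the convergence from Theorem~\ref{thm:mainthm} upgrades to the uniform convergence over $\C_p$ asserted in Theorem~\ref{thm:544}.

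For (i), I would unpack the construction of $E$ coming from Dinh's work \cite{analyticmulticocycle}. As indicated in the introduction, $E$ is the invariant analytic set along which iterated multiplicities of $f$ grow abnormally fast. The defining property of $\cali{H}_d^*(\proj^k)$, namely the multiplicity hypothesis of Lemma 5.4.5 in \cite{superpotentials}, is precisely a uniform upper bound on exactly these multiplicities. I would match the two descriptions to conclude that the hypothesis on $\cali{H}_d^*(\proj^k)$ forces the invariant high-multiplicity locus $E$ to be empty. This already proves the ``in particular'' statement of the corollary and, at the level of pointwise convergence of currents, gives the implication Theorem~\ref{thm:mainthm} $\Rightarrow$ Theorem~\ref{thm:544}.

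For (ii), with $E$ empty the smoothness condition ``$S$ smooth on $E$'' becomes vacuous, so Theorem~\ref{thm:mainthm} yields exponential convergence $d^{-pn}(f^n)^*(S)\to T^p$ for every $S\in\C_p$. To pass from this family of individual exponential rates to uniformity over $\C_p$, I would pass to super-potentials: smooth forms are dense in $\C_p$, $\C_p$ is weakly compact, and the operations induced by $d^{-pn}(f^n)^*$ on super-potentials are equicontinuous. Since the exponential estimate in Theorem~\ref{thm:mainthm} depends on $S$ only through bounds that are controlled by the mass (which is normalized to one), a standard compactness/equicontinuity argument on the space of super-potentials promotes the pointwise exponential convergence to uniform convergence on $\C_p$. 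The uniqueness of the $f^*$-invariant current in $\C_p$ is then automatic: any such $S$ equals $d^{-pn}(f^n)^*S$ and hence $T^p$ in the limit.

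The main obstacle is step~(i): one has to carefully translate the multiplicity condition of Lemma 5.4.5 of \cite{superpotentials} into the language used to build $E$ in \cite{analyticmulticocycle} and verify that the bound implies the emptiness. Step~(ii) is by comparison a routine super-potential/compactness argument, once the constant in the exponential decay is verified to depend on $S$ only through its mass.
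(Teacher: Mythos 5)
Your step (i) is precisely the paper's justification; the paper never gives a formal proof of this corollary, only the remark in the introduction that the multiplicity hypothesis of Lemma 5.4.5 of \cite{superpotentials} forces the exceptional set $E$ of Theorem~\ref{thm:mainthm} to be empty whenever $f\in\cali{H}_d^*(\proj^k)$. Your step (ii) is more machinery than is needed: Theorem~\ref{thm:mainthm} is derived from Proposition~\ref{prop:mainthm2}, which explicitly asserts that the exponential convergence $\cali{V}_{L^n(S)}(R)\to 0$ is \emph{uniform with respect to} $S$ in the class of currents smooth on $E$, so once $E=\emptyset$ the uniformity over all of $\cali{C}_p$ claimed in Theorem~\ref{thm:544} is immediate, with no compactness or equicontinuity argument needed. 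You also correctly guessed the structural reason, namely that the exponential constants depend on $S$ only through its normalized mass, but this cannot be read off from the \emph{statement} of Theorem~\ref{thm:mainthm}; one must inspect the proof to see that the only pointwise dependence on $S$ (the factor $1+\norm{S}_{\infty, E_{\eps}}$ in Lemma~\ref{lem:derivativeWi1} of Section~\ref{sec:2}) lives on a neighborhood of $E$ and therefore drops out when $E=\emptyset$. Finally, observe that if the rate truly depends on $S$ only through its normalized mass, as you claim, the uniformity over $\cali{C}_p$ follows at once without any compactness step, so the two halves of your step~(ii) are pulling in opposite directions: either the mass claim holds and compactness is redundant, or it does not and the compactness argument alone would not preserve the exponential rate uniformly.
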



\begin{rem}
The following conjecture was posed by Dinh-Sibony for the intermediate bidegree case.
\begin{conj}[See Conjecture 1.4 in ~\cite{equidist_holo}]\label{conj:dinh-sibony}
Let $f:\proj^k\to\proj^k$ be a holomorphic endomorphism of degree $d\geq 2$ and $T$ its Green $(1, 1)$-current. Then $d^{-pn}(f^n)^*[H]$ converges to $sT^p$ for every analytic subset $H$ of $\proj^k$ of pure dimension $p$ and of degree $s$ which is generic. Here, $H$ is generic if either $H\cap \mathcal{E}=\emptyset$ or $\codim H\cap \mathcal{E}=p+\codim \mathcal{E}$ for any irreducible component $\mathcal{E}$ of every totally invariant analytic subset of $\proj^k$ and $[H]$ denotes the current of integration on $H$.
\end{conj}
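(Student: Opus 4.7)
The plan is to adapt the super-potential strategy from \cite{superpotentials} used for Theorem \ref{thm:544}, now allowing the exceptional set $E$ to be non-empty. Writing $\mathcal{S}_n := d^{-pn}(f^n)^*S - T^p$, convergence in the current sense is encoded by a Dinh--Sibony super-potential $\mathcal{U}_n$ of $\mathcal{S}_n$ evaluated against test currents in the standard class $\cali{C}_{k-p+1}$ of positive closed $(k-p+1,k-p+1)$-currents of mass $1$. The goal is to prove $|\mathcal{U}_n(R)| \leq C\lambda^n$ for some $\lambda<1$ and every smooth test $R$, and then to conclude for arbitrary $R$ by the density and continuity properties of super-potentials.

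I would take $E$ to be the totally invariant proper analytic set produced by Dinh's multiplicity cocycle in \cite{analyticmulticocycle}, viewed as the ``high-multiplicity locus'' of the iteration. As in the $(1,1)$ story, the key is to control a quasi-potential $u$ of the critical-values current $[V]$ of $f$, pulled back by $f^n$. Here I would use a Lojasiewicz-type bound (Lemma \ref{lem:lojasiewicz}), roughly of the form $u(z) \gtrsim \log \mathrm{dist}(z,V) - \delta \log \mathrm{dist}(z,E) - C$, in which the coefficient $\delta$ quantifies how the multiplicity attenuates away from $E$. Combined with the cut-off Lemma \ref{lem:cut-off} to localize around $E$ and with the current inequality $f_*(\omega^p) \leq (f_*\omega)^p$ to reduce higher-bidegree estimates to $(1,1)$-estimates, pulling the inequality through $f^n$ should yield geometric decay of the $L^1$-norms of $u \circ f^n$. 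Summing in $n$, and using the smoothness of $S$ on $E$ to produce a bounded quasi-potential for $S$ whose singularities are confined to the critical orbit, then yields the desired exponential estimate $|\mathcal{U}_n(R)| \leq C\lambda^n$.

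The main obstacle is the interplay between Lemmas \ref{lem:cut-off} and \ref{lem:lojasiewicz}: in higher codimension we lack Lelong-number-type tools, so the localization around $E$ must be constructed by hand from the geometry of the critical orbit and of $E$, and the Lojasiewicz exponent must be sharp enough that the loss $\delta \log \mathrm{dist}(\cdot,E)$ is absorbed by the geometric iteration rather than destroying it. Establishing Lemma \ref{lem:lojasiewicz} with the right $\delta$-dependence is therefore the technical heart of the argument, since this is precisely what replaces the global multiplicity hypothesis used in Lemma 5.4.5 of \cite{superpotentials} and is what allows $E$ to be non-empty.
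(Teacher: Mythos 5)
This statement is a \emph{conjecture} (Conjecture 1.4 of Dinh--Sibony), not a theorem of the paper; the paper explicitly says only that Theorem~\ref{thm:mainthm} ``gives a partial answer'' to it. So there is no proof of the statement in the paper to compare against, and your proposal must be judged as an attempt to prove the conjecture itself.

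What you have sketched is, in outline, the paper's proof of Theorem~\ref{thm:mainthm}: produce $E$ from Dinh's multiplicity cocycle, establish the Lojasiewicz-type bound of Lemma~\ref{lem:lojasiewicz}, use the cut-off functions of Lemma~\ref{lem:cut-off} to localize near $E$, and push the estimates through super-potentials with the inequality $f_*(\omega^p)\le (f_*\omega)^p$. That is a correct description of the strategy for Theorem~\ref{thm:mainthm}, but it does not reach Conjecture~\ref{conj:dinh-sibony}. The gap is in the hypothesis on the test current. Theorem~\ref{thm:mainthm} requires $S$ to be \emph{smooth on a neighborhood of} $E$. For $S=[H]$ the current of integration on a $p$-dimensional analytic set $H$, smoothness near $E$ forces $H\cap E=\emptyset$. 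But the conjecture explicitly allows $H\cap\mathcal{E}\ne\emptyset$ provided the intersection is of the expected codimension $p+\operatorname{codim}\mathcal{E}$ — i.e.\ it allows transversal intersections, which your method cannot handle because the smoothing and Green quasi-potential estimates (Lemma~\ref{lem:derivativeWi1}, Section~\ref{sec:2}) break down as soon as $[H]$ has mass concentrating on $E$. In other words, in the conjecture's genericity class the current $[H]$ need not have bounded potentials near $E$, and nothing in the sketched argument replaces that.

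There is a second, subtler mismatch: the conjecture's genericity is phrased relative to \emph{every totally invariant analytic subset} of $\proj^k$, whereas the $E$ in your argument (and in the paper) is the specific cocycle-based set $E_\lambda\cup E'_\mu$ coming from Theorem~\ref{thm:kappa-}. These need not coincide, and no step in your proposal bridges the two notions of exceptional set. So even granting a suitable extension of the estimates to currents of integration, the proposal as written addresses a different genericity hypothesis than the one in the conjecture. To make progress on the conjecture itself you would need a genuinely new idea for the transversal-intersection case — for instance, a regularization of $[H]$ adapted to $E$ together with a replacement of Lemma~\ref{lem:lojasiewicz} that controls the loss coming from the mass of $H$ lying on $E$ — and this is exactly the open part of the problem.
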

Notice that Theorem ~\ref{thm:mainthm} gives a partial answer to Conjecture ~\ref{conj:dinh-sibony}.
\end{rem}

This paper is organized as follows. In Section \ref{sec:analyticcocycle}, we find the desired invariant analytic subset $E$ for a given holomorphic endomorphism $f:\proj^k\to\proj^k$. From Section \ref{sec:lineq} through Section \ref{sec:superpotentials}, we summarize preliminaries. In Section \ref{sec:settings}, we prove the main theorem. From Section \ref{sec:1} to Section \ref{sec:4}, we complete the details of the computations. In the last section, we give examples where Theorem \ref{thm:mainthm} is applicable but Theorem \ref{thm:544} is not and we put some remarks.

\subsection*{Notation} In this paper, we consider an arbitrary holomorphic endomorphism $f:\proj^k\to\proj^k$ of degree $d\geq 2$. We use $f$ only for this endomorphism. For a general function, we will use $g$. We denote by $\Phi_n$ and $\Psi_n$ the hypersurfaces of the critical points and the critical values of $f^n$, respectively.

Since $\Aut(\proj^k)$ is a complex Lie group of dimension $k^2+2k$, we will work with a fixed local holomorphic coordinate chart in a neighborhood of $\id\in\Aut(\proj^k)$. We denote by $\zeta$ a coordinate system over the chart such that $\zeta=0$ at $\id$ and by $\tau_\zeta$ its corresponding automorphism in $\Aut(\proj^k)$. We choose a norm $\norm{\zeta}_A$ on the coordinate system $\zeta$ such that the norm $\norm{\cdot}_A$ is invariant under the involution $\tau\to\tau^{-1}$ and such that $\set{\norm{\zeta}_A<1}$ lies inside the coordinate system. We fix a smooth probability measure $\rho$ for the coordinate system $\set{\zeta}$ with compact support in $\set{\norm{\zeta}_A<1}$ such that $\rho$ is radial and decreasing as $\norm{\zeta}_A$ increases. In particular, $\rho$ is preserved under the involution $\tau\to\tau^{-1}$.

We use $\omega$ for the standard Fubini-Study form and the distance $\dist(\cdot, \cdot)$ on $\proj^k$ is measured with respect to the standard Fubini-Study metric unless stated otherwise. We use $\dist_{\mathrm{euc}}(\cdot, \cdot)$ for the Euclidean distance. We use $\norm{\cdot}$ for the standard Euclidean norm and for the mass of a current (cf. Section \ref{sec:currents}), but the meaning becomes clear from the context. We denote by $\norm{\cdot}_{\cali{L}^p}$ (or $\norm{\cdot}_{\cali{L}^p(U)}$) the $\cali{L}^p$-norm on $\proj^k$ (or on the set $U\subseteq \proj^k$, resp.), by $\norm{\cdot}_\infty$ (or $\norm{\cdot}_{\infty, U}$) the $\sup$-norm on $\proj^k$ (or on the set $U\subseteq \proj^k$, resp.) and by $\norm{\cdot}_{\cali{C}^\alpha}$ (or $\norm{\cdot}_{\cali{C}^\alpha(U)}$) the $\cali{C}^\alpha$-norm on $\proj^k$ (or on the set $U\subseteq \proj^k$, resp.). By the $\cali{C}^\alpha$-norm (or the sup-norm) of a form, we mean the sum of the $\cali{C}^\alpha$-norms (or the sup-norms, resp.) of its coefficients with respect to a fixed finite atlas of $\proj^k$. The $\epsilon$-neighborhood $U_\epsilon$ of a set $U$ in $\proj^k$ for $\epsilon>0$ is defined by $\set{x\in\proj^k:\dist(x, U)<\epsilon}$.

We denote by $\nu(x, R)$ the Lelong number of a positive closed $(1,1)$-current $R$ at $x$ and by $[V]$ the current of integration over $V$ where $V$ is an analytic subset of $\proj^k$. We abbreviates ``(quasi-)plurisubharmonic'' to ``(q-)psh'', respectively.

We will use two different types of multiplicities. By the multiplicity of a function $g$ at a point $x$, we mean the number of the preimages near $x$ of $y$ under $g$ for a generic $y$ close to $g(x)$. The multiplicity of an analytic subvariety (or subset) $V$ of dimension $l$ in $\proj^k$ at a point $P$, denoted by $\multi_P(V)$, is taken to be the number of sheets in the projection, in a small coordinate polydisc in $\cplx^k\subset\proj^k$ around $P$, of $V$ onto a generic $l$-dimensional polydisc. For the details about the projection, see ~\cite{cplxanalvar}.

\subsection*{Acknowledgement}
The author would like to thank John Erik Forn{\ae}ss for introducing this problem and his support and advice. The author would like to thank Tien-Cuong Dinh and Nessim Sibony for their advice and comments. The author would like to thank the referee for his careful reading. His comments and suggestions were helpful in improving and clarifying the arguments.

\section{Analytic (Sub-)Multiplicative Cocycles}\label{sec:analyticcocycle}
In this section, we consider two different types of multiplicities related to $f^n$ as $n\to\infty$. As a tool, we use the concept of the analytic (sub-)multiplicative cocycle. It was first introduced by Favre\cite{favre0}\cite{favre1} and further studied by Dinh\cite{analyticmulticocycle} and Gignac\cite{Gignac}.

Let $X$ be an irreducible compact complex space of dimension $k$, not necessarily smooth. Let $g:X\to X$ be an open holomorphic map. 

\begin{defn}[See Definition 1.1 in ~\cite{analyticmulticocycle}]\label{defn:acocycle}
A sequence $\set{\kappa_n}$ of functions $\kappa_n:X\to (0, \infty)$ for $n\geq 0$ is said to be an analytic sub-multiplicative (resp., multiplicative) cocycle (with respect to $g$), if for all $m, n\geq 0$, for all $x\in X$,
\begin{enumerate}
\item $\kappa_n$ is upper-semicontinuous (usc) with respect to the Zariski topology on $X$ and $\kappa_n\geq {c_\kappa}^n$ for some constant $c_\kappa>0$, and
\item $\kappa_{n+m}(x)\leq\kappa_n(x)\cdot\kappa_m(g^n(x))$ (resp., =).
\end{enumerate}
\end{defn}

\begin{defn}[See Introduction in ~\cite{analyticmulticocycle}]
$\kappa_{-n}(x)=\max_{y\in g^{-n}(x)}\kappa_n(y)$.
\end{defn}

Observe that $\kappa_{-n}$ is usc in the Zariski sense. The key theorem to use in this section is the following:

\begin{thm}[See Theorem 1.2 in ~\cite{analyticmulticocycle}]\label{thm:kappa-}
The sequence $\set{(\kappa_{-n})^{1/n}}$ converges to a function $\kappa_-$ defined over $X$ with the following properties: for all $\delta>\inf_X\kappa_-$, $\set{\kappa_-\geq\delta}$ is a proper analytic subset of $X$, invariant under $g$ and contained in the orbit of $\set{\kappa_n\geq\delta^n}$ for all $n\geq 0$. In particular, $\kappa_-$ is usc in the Zariski sense.
\end{thm}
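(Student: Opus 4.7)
The plan is to combine a Fekete-type submultiplicative estimate on the global growth of $\kappa_{-n}$ with the Noetherian property of the Zariski topology on $X$.

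First I would derive a submultiplicative inequality for $\kappa_{-n}$. For any $y\in f^{-(n+m)}(x)$ the cocycle condition gives $\kappa_{n+m}(y)\leq\kappa_n(y)\,\kappa_m(f^n(y))$, and since $f^n(y)\in f^{-m}(x)$, taking the maximum over $y$ yields
\begin{equation*}
\kappa_{-(n+m)}(x)\;\leq\;\max_{z\in f^{-m}(x)}\kappa_{-n}(z)\,\kappa_m(z)\;\leq\;\bigl(\sup_X\kappa_{-n}\bigr)\cdot\kappa_{-m}(x).
\end{equation*}
In particular $K_n:=\sup_X\kappa_{-n}$ is submultiplicative, so by Fekete's lemma $K_n^{1/n}$ decreases to a limit $L\geq 1$, and $\limsup_n\kappa_{-n}(x)^{1/n}\leq L$ uniformly in $x$.

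Next, for fixed $\delta>1$, the level set $F_n(\delta):=\{\kappa_n\geq\delta^n\}$ is Zariski-closed by upper semicontinuity of $\kappa_n$ and proper because $\min_X\kappa_n=1$. Since $f^n$ is a finite map, Remmert's proper mapping theorem gives that $E_n(\delta):=f^n(F_n(\delta))=\{\kappa_{-n}\geq\delta^n\}$ is Zariski-closed, and proper by the dimension count $\dim E_n(\delta)\leq\dim F_n(\delta)<\dim X$. I would then use Noetherianity of $X$ to define $E(\delta)$ via stabilization of the decreasing chain $\widetilde{E}_N(\delta):=\overline{\bigcup_{n\geq N}E_n(\delta)}^{\mathrm{Zar}}$; outside $E(\delta)$, only finitely many $E_n(\delta)$ contain $x$, forcing $\kappa_{-n}(x)<\delta^n$ eventually and $\limsup_n\kappa_{-n}(x)^{1/n}\leq\delta$. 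Setting $\kappa_-(x):=\inf\{\delta>1:x\notin E(\delta)\}$ gives $\{\kappa_-\geq\delta\}=\bigcap_{\delta'<\delta}E(\delta')$, a countable intersection of analytic sets, hence analytic by Noetherianity. The orbit containment follows by iterating the cocycle: for $m=qn_0+r$ the inequality $\kappa_m(y)\leq\prod_{j=0}^{q-1}\kappa_{n_0}(f^{jn_0}(y))\cdot\kappa_r(f^{qn_0}(y))$ forces some $\kappa_{n_0}(f^{jn_0}(y))\geq\delta^{n_0}$ whenever $\kappa_m(y)\geq\delta^m$ and $m$ is large, placing $x=f^m(y)$ in the orbit $\bigcup_k f^k(F_{n_0}(\delta))$.

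The remaining points are forward $f$-invariance of $\{\kappa_-\geq\delta\}$, properness of $E(\delta)$ for \emph{every} $\delta>1$ (not only $\delta>L$), and promotion of $\limsup$ to a genuine pointwise limit. Forward invariance uses the inclusion $f^{-(n-1)}(x)\subseteq f^{-n}(f(x))$, which in the multiplicative case yields $\kappa_{-n}(f(x))\geq\kappa_{-(n-1)}(x)$ directly and in the general case is bootstrapped through the orbit containment. The liminf lower bound $\liminf_n\kappa_{-n}(x)^{1/n}\geq\kappa_-(x)$ is the subtlest point: one must transfer the growth $\kappa_{-n}\geq(\delta')^n$ from $\bigcup_{n\geq N}E_n(\delta')$ to its Zariski closure, combining the analytic rigidity of the stabilized $E(\delta')$ with the cocycle inequality. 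This last step, together with keeping $E(\delta)$ properly contained in $X$ for $1<\delta\leq L$, is the main obstacle: a countable union of proper analytic subsets can easily be Zariski-dense, so properness of the stabilization must be extracted from the submultiplicative growth of $K_n$ rather than from dimension alone, and the core of the argument lies in balancing this global Fekete control against the local analytic geometry to force both properness and pointwise convergence.
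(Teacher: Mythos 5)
This theorem is not proved in the paper at all; it is quoted verbatim from Dinh's article \cite{analyticmulticocycle}, so there is no ``paper's own proof'' to compare against. I will therefore assess your outline on its own terms.

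The high-level ingredients you identify — submultiplicativity of $\sup_X\kappa_{-n}$ via the cocycle inequality plus Fekete, Zariski-closedness of $\{\kappa_{-n}\geq\delta^n\}$ via Remmert, a Noetherian stabilization, and a pigeonhole on the chain $\kappa_m(y)\leq\prod_j\kappa_{n_0}(f^{jn_0}(y))\cdot\kappa_r(f^{qn_0}(y))$ — are all present in the actual argument. But the way you assemble them leaves a genuine gap, which you yourself flag and do not resolve: you define $E(\delta)$ as the stabilized value of $\widetilde{E}_N(\delta)=\overline{\bigcup_{n\geq N}E_n(\delta)}^{\mathrm{Zar}}$, and nothing forces this Zariski closure to be a \emph{proper} subset of $X$. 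A countable union of proper analytic subsets can very well be Zariski dense, and dimension counts on the individual $E_n(\delta)$ do not pass to the closure of the infinite union. Since properness of the exceptional set is the whole content of the theorem, this is not a loose end; it is the crux.

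The missing idea is to run the Noetherian stabilization in the \emph{other} direction, along the forward orbit of a single level set rather than along the tails of the sequence. Fix $\delta>1$ and $n_0\geq 1$, put $A:=\{\kappa_{n_0}\geq\delta^{n_0}\}$, and consider the increasing chain of Zariski closures $B_N:=\overline{A\cup f(A)\cup\cdots\cup f^N(A)}$. Each $B_N$ is the closure of a \emph{finite} union of proper analytic subsets of the irreducible space $X$, hence a proper analytic subset; by Noetherianity the chain stabilizes at some $B=B_{N_0}$, and $f(B)\subseteq B$ by construction. Thus $B$ is proper, analytic, and $f$-invariant — exactly the kind of exceptional set the theorem promises. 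Your pigeonhole computation then shows: if $x\notin B$, every $y\in f^{-n}(x)$ has all iterates $f^{jn_0}(y)$ outside $A$, so $\kappa_n(y)<\delta^{qn_0}\cdot\sup\kappa_r\leq C\delta^n$ for a constant $C$ depending only on $n_0$, giving $\limsup_n\kappa_{-n}(x)^{1/n}\leq\delta$ off $B$. This fixes the properness at a stroke and simultaneously delivers the stated containment in the orbit of $\{\kappa_{n_0}\geq\delta^{n_0}\}$, because $B$ \emph{is} the Zariski closure of that orbit. Your decreasing chain $\widetilde{E}_N(\delta)$ never uses the $f$-invariance of the problem and therefore cannot see why the limit set is small; the forward-orbit closure does.

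Even with this fix, two points remain to be written out carefully, and you correctly identify them as the subtlest: the upgrade from $\limsup$ to an actual pointwise limit of $(\kappa_{-n})^{1/n}$, and the bookkeeping in the pigeonhole where $\delta$ degrades to $\delta-\epsilon$ (you must exploit that $\kappa_{n_0}$ is integer-valued, so $\{\kappa_{n_0}\geq\delta'^{n_0}\}$ is locally constant in $\delta'$, to remove the loss). Neither is filled in your proposal. Since the paper merely cites Dinh's Theorem 1.2, the honest assessment is that your sketch captures the right tools but misorganizes the Noetherian step in a way that breaks the properness claim, and leaves the convergence statement unproved.
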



For each $n\in\mathbb{N}$, define $\mu_n(x)$ to be the local multiplicity of $f^n$ at $x\in\proj^k$. Then, 
$\set{\mu_n}$ is an analytic multiplicative cocycle with respect to $f$.
We consider the following two types of multiplicities related to $f^n$:
\subsection{The function $\mu_{-n}$} By Theorem \ref{thm:kappa-}, the limit function $\mu_-$ exists for $\set{\mu_n}$ with $\min_{\proj^k}\mu_-=1$ and $1\leq \mu_-\leq d$.
Let $\lambda$ be arbitrarily given such that $1<\lambda<d$. We define $E_\lambda:=\set{\mu_-\geq d\lambda^{-1}}$. The set $E_\lambda$ is a proper invariant analytic subset for $f$. 
We obtain:
\begin{lem}[See Theorem 1.2  in \cite{analyticmulticocycle}] \label{lem:multiplicitycondition1}
There are a sufficiently large $n_\lambda\in\mathbb{N}$ and some $\delta_\lambda$ with $1<\delta_\lambda<d\lambda^{-1}$ such that $\mu_{-n_\lambda m}<{\delta_\lambda}^{n_\lambda m}$ on $\proj^k\setminus E_\lambda$ for all $m\in \mathbb{N}$.
\end{lem}
\subsection{The multiplicity of $\Psi_n$ as an analytic subset} 
We modify the argument in Section 3 of Favre-Jonsson\cite{brolinthm}.
For notational convenience, let $\iota(x, g):=\nu(x, dd^c\log\abs{g})$. For each $n\in\mathbb{N}$, we define $\mu'_n(x):=2k-1+2\iota(x, J_{f^n})$ on $\proj^k$, where $J_{f^n}$ denotes the Jacobian determinant of $f^n$. Then, $\iota(x, J_{f^{m+n}})=\iota(x, J_{f^n})+\iota(x, J_{f^m}\circ f^n)$ for any $x\in\proj^k$ and any $m, n\in\mathbb{N}$. The proposition below
 implies that $\set{\mu'_n}$ is an analytic submultiplicative cocycle with respect to $f$ (see Section 3 in \cite{brolinthm}).
\begin{prop}[See Remark 3 in ~\cite{favre}; for a sharper version, see also ~\cite{Parra1}]
For any $x\in\proj^k$ and any $m, n\geq 0$, the following inequality
$$\iota(x, J_{f^m}\circ f^n)\leq(2k-1+2\iota(x, J_{f^n}))\cdot\iota(f^n(x), J_{f^m})$$
holds.
\end{prop}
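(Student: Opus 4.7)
The plan is to reduce the inequality to a purely local statement about vanishing orders of holomorphic functions. For a holomorphic $g$, the current $dd^c\log|g|$ is integration over the divisor of $g$ with multiplicity, so $\xi(x,g)=\nu(x,dd^c\log|g|)$ equals the order of vanishing $\text{ord}_x(g)$ of $g$ at $x$. Choose local charts around $p$ and $q:=f^n(p)$, write $F:=f^n$ and $h:=J_{f^m}$, and set $a:=\xi(q,J_{f^m})$, $b:=\xi(p,J_{f^n})$. The proposition becomes
\[
\text{ord}_p(h\circ F)\leq(2k-1+2b)\,a.
\]

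The main tool is a \L ojasiewicz-type estimate for the finite holomorphic germ $F:(\cplx^k,p)\to(\cplx^k,q)$. Since $f$ is finite, $F^{-1}(q)=\{p\}$ in a neighbourhood of $p$, so there exist $c>0$ and an exponent $L>0$ such that $\|F(z)-q\|\geq c\|z-p\|^L$ near $p$. The crucial input is the sharp linear-in-$b$ bound $L\leq 2b+2k-1$, obtained by first controlling the local multiplicity $e(F,p)$ in terms of $\text{ord}_p(J_F)$ and then invoking a classical bound on the \L ojasiewicz exponent by the local multiplicity (in the spirit of results of P\l oski or Teissier). This is the step supplied by Favre's \cite{favre}.

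Granted the estimate, the composition inequality follows by a generic-line argument. For a direction $v\in\cplx^k$, restrict $F$ to the line $\{p+tv\}$ and write $F(p+tv)-q=t^{\mu_v}W_v+O(t^{\mu_v+1})$ with $W_v\neq 0$. The \L ojasiewicz estimate forces $\mu_v\leq L$ for every $v$. For $v$ in a Zariski-open set the leading direction $W_v$ avoids the tangent cone of $\{h=0\}$ at $q$, hence $h(F(p+tv))$ vanishes in $t$ to order exactly $\mu_v\cdot a$. Since the vanishing order of a holomorphic function at $p$ equals its generic order along complex lines through $p$, we conclude
\[
\text{ord}_p(h\circ F)\leq \mu_v\cdot a\leq L\cdot a\leq(2k-1+2b)\,a.
\]

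The principal obstacle is the sharp linear-in-$b$ bound on the \L ojasiewicz exponent; naive estimates yield only polynomial bounds of the form $(b+1)^{k-1}$, which are far too weak, so one needs a more refined argument, presumably the Newton-polygon or weighted blow-up type of analysis underlying Favre's result. A secondary technical point is arranging that the leading direction $W_v$ avoids the tangent cone of $h$ at $q$; this is a density argument on the space of directions and presents no serious difficulty once the \L ojasiewicz estimate is in hand.
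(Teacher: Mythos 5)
The paper offers no proof of this proposition: it is quoted directly from Remark~3 of Favre's note \cite{favre}, so there is no argument in the text to compare yours against, and your proposal is necessarily a from-scratch reconstruction rather than a parallel to anything the author wrote.

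Your reduction to the local statement (with $F=f^n$, $h=J_{f^m}$, $a=\xi(q,h)$, $b=\xi(p,J_F)$) is correct, and the two-step skeleton --- control the Lojasiewicz exponent $L$ of $F$ at $p$, then show $\mathrm{ord}_p(h\circ F)\leq La$ --- is a sensible shape for such a proof. But the generic-line argument for the second step has a genuine gap, and it is not the harmless ``density argument'' you describe at the end. Write $F(p+w)-q=\sum_{j\geq\mu}P_j(w)$ with $P_j$ homogeneous of degree $j$ and $P_\mu\not\equiv 0$, and $h(q+w)=H_a(w)+\cdots$ with $H_a$ homogeneous of degree $a$. For generic $v$ one has $\mu_v=\mu$ and $W_v=P_\mu(v)$, so your argument needs $H_a(P_\mu(v))\neq 0$ generically, i.e.\ $H_a\circ P_\mu\not\equiv 0$. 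This is a nontrivial condition that can fail identically: take $k=2$, $F(w_1,w_2)=(w_1^2,\,w_1^2+w_2^4)$ (finite at $0$, $L=4$) and $h(z_1,z_2)=z_1-z_2$. Then $P_2(v)=(v_1^2,v_1^2)$ and $H_1\circ P_2\equiv 0$; one has $\mathrm{ord}_t\,h(F(tv))=4$ for every $v$, not $\mu_v a=2$. The inequality you want still holds ($4\leq La=4$), but lines do not see it. The standard repair is to pass from lines to analytic arcs: the vanishing order of a germ at $p$ is the minimum of $\mathrm{ord}_0(g\circ\gamma)/\mathrm{ord}_0(\gamma-p)$ over parametrized arcs $\gamma$ through $p$, the Lojasiewicz exponent is the maximum of $\mathrm{ord}_0(F\circ\gamma-q)/\mathrm{ord}_0(\gamma-p)$ over such arcs (Lejeune--Teissier/P\l oski), and arcs through $q$ lift through the finite germ $F$ to arcs through $p$; lifting an arc realizing $a$ and multiplying the two quotients gives $\mathrm{ord}_p(h\circ F)\leq La$ directly. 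Finally, you rightly flag that the bound $L\leq 2k-1+2b$ (or whatever estimate Favre in fact proves in Remark~3) is where all the real work lies and defer it back to \cite{favre}; that is honest, but it means your proposal supplies only a frame around the proof, not the proof itself.
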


Hence, by Theorem \ref{thm:kappa-}, the limit function $\mu'_-$ exists for $\set{\mu'_n}$.
 We have $\min_{\proj^k}\mu'_-=1$ and $1\leq \mu'_-\leq d$. 
Let $\lambda'$ be arbitrarily given so that $1<\lambda'<d$. Let $E'_{\lambda'}:=\set{\mu'_-\geq d(\lambda')^{-1}}$. The set $E'_{\lambda'}$ is a proper invariant analytic subset for $f$. Then, as previously done for $\set{\mu_n}$, there are a sufficiently large $n'_{\lambda'}\in\mathbb{N}$ and some $\delta'_{\lambda'}$ with $1<\delta'_{\lambda'}<d(\lambda')^{-1}$ such that $\mu'_{-n'_{\lambda'}m}<(\delta'_{\lambda'})^{n'_{\lambda'}m}$ on $\proj^k\setminus E_{\lambda'}$ for all $m\in \mathbb{N}$. In particular, for all $m\in\mathbb{N}$, for each $x\in\Psi_{n'_{\lambda'}m}\setminus E'_{\lambda'}$, we have that
\begin{eqnarray}
\label{ineq:maxmulticrit}&&\max_{y\in \Phi_{n'_{\lambda'} m}\cap f^{-n'_{\lambda'} m}(x)}\nu(y, [\Phi_{n'_{\lambda'} m}])\\
&&\nonumber\leq \max_{y\in \Phi_{n'_{\lambda'} m}\cap f^{-n'_{\lambda'} m}(x)}\nu(y, dd^c\log\abs{J_{f^{n'_{\lambda'}m}}})\leq(\delta'_{\lambda'})^{n'_{\lambda'} m}<\left(\frac{d}{\lambda'}\right)^{n'_{\lambda'} m}.
\end{eqnarray}

From this inequality, we obtain a main lemma of this section:
\begin{lem}\label{lem:multiplicityconditions}
Assume that $\lambda$ and $\lambda'$ are arbitrarily given such that $1<\lambda, \lambda'<d$. Let $E_\lambda$, $n_\lambda$ and $E'_{\lambda'}$, $n'_{\lambda'}$ be defined as above in the discussion. Let $E:=E_\lambda\cup E'_{\lambda'}$ and $N_E:=n_\lambda n'_{\lambda'}$. Then, for any positive integer $j\in\mathbb{N}$,
\begin{enumerate}
\item $E$ is invariant under $f$,
\item $\mu_{-jN_E}(x)<(d\lambda^{-1})^{jN_E}$ for $x\in\proj^k\setminus E$ and
\item $\nu(x, [\Psi_{jN_E}])<c_\Psi jN_E(d^{k+1}(\lambda^k\lambda')^{-1})^{jN_E}$ for $x\in\Psi_{jN_E}\setminus E$,
\end{enumerate}
where $c_\Psi$ denotes the number of the irreducible components in the hypersurface $\Psi_1$ of the critical values of $f$.
\end{lem}

\begin{proof}
The first assertion is from Theorem \ref{thm:kappa-} and the second one from Lemma \ref{lem:multiplicitycondition1}. We prove the last assertion. For notational convenience, in the rest of the proof, $n$ denotes $jN_E$.

Let $\Psi'_n$ denote an irreducible hypersurface in $\Psi_n$. Let $\set{\Phi'_{n,l}}_l$ denote the irreducible hypersurfaces in $f^{-n}(\Psi'_n)$. Then, for each $l$, we have that $(f^n)_*(\Phi'_{n,l})=\Psi'_n$ and that $f^n$ induces a ramified covering of $\Phi'_{n,l}$ over $\Psi'_n$ with the covering number $m'_{n,l}$. Then we have $m'_{n,l}[\Psi'_n]=m'_{n,l}[(f^n)_*(\Phi'_{n,l})]=(f^n)_*[\Phi'_{n,l}]$ in the current sense. For this computation, for example, see Example 3.9.2 in \cite{Demailly}. Since $E_\lambda\subseteq E$ and $n$ is an integral multiple of $n_\lambda$, Lemma \ref{lem:multiplicitycondition1} implies that $\mu_{-n}(x)$ for $x\in\proj^k\setminus E$ is uniformly bounded by $(d\lambda^{-1})^n$. By Theorem 3.9.12 in \cite{Demailly}, we have that for $x\in\Psi'_n\setminus E$,
\begin{eqnarray*}
m'_{n,l}\nu(x, [\Psi'_n])&=&\nu(x, (f^n)_*[\Phi'_{n,l}])\leq\sum_{y\in \Phi'_{n,l}\cap f^{-n}(x)} \left(\frac{d}{\lambda}\right)^{nk}\nu(y, [\Phi'_{n,l}])\\
&\leq& m'_{n,l}\left(\frac{d}{\lambda}\right)^{nk}\max_{y\in \Phi'_{n,l}\cap f^{-n}(x)}\nu(y, [\Phi'_{n,l}]).
\end{eqnarray*}

Since $n$ is an integral multiple of $n'_{\lambda'}$, Inequality \ref{ineq:maxmulticrit} induces that for $x\in\Psi'_n\setminus E$,
\begin{equation}\label{ineq:irreduciblecasebound}
\nu(x, [\Psi'_n])< \left(\frac{d}{\lambda}\right)^{nk}\max_{y\in \Phi'_{n,l}\cap f^{-n}(x)}\nu(y, [\Phi'_{n,l}])<\left(\frac{d^{k+1}}{\lambda^k\lambda'}\right)^n.
\end{equation}
Observe that this inequality is independent of the choice of $\Psi'_n$ and $\Phi'_{n,l}$.

We consider the general case. By the chain rule, $\Phi_n=\cup_{i=0}^{n-1}f^{-i}(\Phi_1)$ and therefore, $\Psi_n=\cup_{i=0}^{n-1}f^{i}(\Psi_1)$. Since $f^i$ sends an irreducible hypersurface to an irreducible hypersurface for $i=0, ..., n-1$, the number of irreducible hypersurfaces in $\Psi_n$ is bounded by $c_\Psi n$. Together with Inequality \ref{ineq:irreduciblecasebound}, we obtain the general case, that is, for $x\in\Psi_n\setminus E$,
\begin{displaymath}
\nu(x, [\Psi_n])<c_\Psi n\Big(\frac{d^{k+1}}{\lambda^k\lambda'}\Big)^n.
\end{displaymath}
\end{proof}

\section{Lojasiewicz Type Inequalities}\label{sec:lineq}
In this section, we use Lojasiewicz type inequalities to approximate a certain type of q-psh functions in Lemma \ref{lem:lojasiewicz}. We start by recalling two Lojasiewicz type inequalities. In this section, we denote the $m$-th power of $z$ by $[z]^m$ to distinguish it from $z^m$ with an index $m$.  Let $B_a(r)$ denote a ball centered at $a\in\cplx^k$ and of radius $r>0$ with respect to the Euclidean distance in $\cplx^k$. Let $\pi$ denote the canonical projection from $\cplx^k\times\cplx^k$ onto its first factor just in the next two propositions.

\begin{prop}[See ~\cite{equidist_holo}, ~\cite{cplxdynII}]\label{prop:lojasiewicz}
Let $X$ be an analytic subset of $B_0(1)\times B_0(1)$ of pure dimension $k$ and $s$ a fixed integer. Assume that $\pi:X\to B_0(1)$ defines a ramified covering of degree $m\leq s$ over $B_0(1)$. Then there is a constant $c_1>0$ such that if $x, y$ are two points in $B_0(3/4)$ we can write
\begin{displaymath}
\pi^{-1}(x)\cap X=\set{x^1, ..., x^m}\,\,\,\,\, and\,\,\,\,\,\pi^{-1}(y)\cap X=\set{y^1, ..., y^m}
\end{displaymath}
with $\norm{x^i-y^i}\leq c_1\norm{x-y}^{1/m}$. Moreover, the constant $c_1$ depends on $s$ but not on $X$. Note that the points in the fibers $\pi^{-1}(x)\cap X$ and $\pi^{-1}(y)\cap X$ are repeated according to their multiplicities.
\end{prop}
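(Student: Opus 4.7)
The plan is to reduce the statement to the classical Hölder dependence of the roots of a monic polynomial on its coefficients (Ostrowski-type estimate): if two monic polynomials of degree $m$ have coefficients differing by at most $\delta$ and uniformly bounded roots, then their roots can be matched so that each corresponding pair differs by at most $C\delta^{1/m}$, with $C$ depending only on $m$ and the uniform root bound.

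Write points of $\Delta_1\times\Delta_1$ as $(z,w)$ so that $\pi(z,w)=z$. For any holomorphic function $h$ on $\Delta_1\times\Delta_1$, the elementary symmetric polynomials in the values $h(z,w^i(z))$ taken over the fiber $\pi^{-1}(z)\cap X=\set{(z,w^i(z))}_{i=1}^m$ are holomorphic on $\Delta_1$ (this is a standard consequence of $\pi$ being a finite ramified cover, via integration along the fiber). Applying this with $h(z,w)=\agl{w,v}$ for a unit vector $v\in\cplx^k$, the elementary symmetric polynomials in the complex numbers $\agl{w^i(z),v}$, $i=1,\dots,m$, are holomorphic in $z$ and uniformly bounded by a constant depending only on $m\leq s$, since every $w^i(z)$ lies in $\Delta_1$. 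By the Cauchy estimates on $\Delta_{3/4}$, these coefficients are then Lipschitz in $z$ with constant depending only on $s$.

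Given $x,y\in\Delta_{3/4}$, applying the Ostrowski estimate to the two monic degree-$m$ polynomials with roots $\set{\agl{w^i(x),v}}_i$ and $\set{\agl{w^i(y),v}}_i$ yields a permutation $\sigma_v$ such that $\abs{\agl{w^i(x),v}-\agl{w^{\sigma_v(i)}(y),v}}\leq C\norm{x-y}^{1/m}$ for all $i$, with $C$ depending only on $s$. Running this argument through finitely many directions $v_1,\ldots,v_N$ spanning $\cplx^k$ (with $N$ depending only on $k$), and recovering vectors from their scalar projections, gives after a matching argument a single permutation $\sigma$ and the bound $\norm{w^i(x)-w^{\sigma(i)}(y)}\leq c_1\norm{x-y}^{1/m}$ with $c_1$ depending only on $s$.

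The main obstacle I anticipate is producing one permutation $\sigma$ that is simultaneously valid across all chosen directions. Off the branch locus of $\pi$, the fiber points are distinct and the matching $\sigma_v$ is uniquely determined by proximity, so the various $\sigma_v$ coincide; to reach the full $\Delta_{3/4}$, one uses that the coefficient estimates and the constant $C$ are uniform and passes to the limit along the dense complement of the branch locus. The uniformity of $c_1$ in $X$ is then automatic, since every ingredient (the fiber bound, the Cauchy estimate, and Ostrowski's constant) depends only on $s$ and on the ambient radius, not on the particular analytic set $X$.
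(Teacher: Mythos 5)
The paper does not prove Proposition~\ref{prop:lojasiewicz}; it is cited from \cite{equidist_holo} and \cite{cplxdynII}. So your proof must stand on its own, and it does not: there is a genuine gap at the step you yourself flag, namely producing a \emph{single} permutation $\sigma$ from the family of scalar permutations $\sigma_v$. The reduction of the fiber coefficients to holomorphic, uniformly bounded, hence Lipschitz functions on $\Delta_{3/4}$ is correct, and invoking the scalar Ostrowski bound for each direction is fine. But the Ostrowski estimate for direction $v$ only asserts the existence of \emph{some} permutation of the scalar projections satisfying the $\norm{x-y}^{1/m}$ bound; it does not say that this permutation is the ``nearest point'' matching, nor that the permutations for different directions agree. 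Your patch --- that off the branch locus the fiber points are distinct, hence the matchings agree and one passes to the limit --- does not hold: distinct points of $\cplx^k$ can have nearly or exactly coinciding projections onto a fixed $v$, so even away from the ramification locus $\sigma_v$ is not determined ``by proximity,'' and different directions can genuinely force incompatible permutations. More fundamentally, a bound on $\abs{\agl{w^i(x)-w^{\sigma(i)}(y),v}}$ for the finitely many $v_j$ you chose does not bound $\norm{w^i(x)-w^{\sigma(i)}(y)}$ unless that \emph{same} $\sigma$ is small in every direction, which is exactly what needs proof.

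To see that the data you are using (Lipschitz control of the symmetric functions of each scalar projection) is not even sufficient in principle, consider the two multisets $\set{(0,0),(1,1)}$ and $\set{(\eps,1+\eps),(1+\eps,\eps)}$ in $\cplx^2$: the coordinate-wise power sums (equivalently, the coefficients of the projection polynomials onto $e_1$ and onto $e_2$) agree up to $O(\eps)$, while any matching of the two pairs has error $\simeq 1$. Of course this configuration cannot occur for an actual analytic cover, precisely because the \emph{mixed} power sum $\sum_i w^i_1 w^i_2$ distinguishes it, and that quantity is also holomorphic and Lipschitz in $z$; but your argument never uses it. What is really needed is a vector-valued Ostrowski/Lojasiewicz statement: if all polarized symmetric functions $\sum_i (w^i)^\alpha$, $\abs{\alpha}\leq m$, of two bounded $m$-point multisets in $\cplx^k$ agree up to $\delta$, then the multisets can be matched within $C\delta^{1/m}$. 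That estimate is the actual content of the cited references and is not a formal consequence of the one-variable case plus coordinate projections; supplying it (or citing it) is the missing piece.
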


\begin{prop}[See Proposition 4.2 in ~\cite{equidist_holo}]\label{prop:lojasiewicz1}
Let $X$ be an analytic subset of $B_0(1)\times B_0(1)$ of pure dimension $k$ and let $\delta$ be an integer. Assume $\pi:X\to B_0(1)$ defines a ramified covering of degree $m$ over $B_0(1)$. Let $Z\subset B_0(1)$ be a proper analytic set such that the multiplicity of every point in $\pi^{-1}(x)\cap X$ is at most equal to $\delta<m$ for $x\in B_0(1)\setminus Z$. Then there are constants $c_2>0, N_2\geq 1$ such that for any $0<t<1$ and all $x, y\in B_0(1/2)$ with $\dist_{\mathrm{euc}}(x, Z)\geq t$ and $\dist_{\mathrm{euc}}(y, Z)\geq t$, we can write
\begin{displaymath}
\pi^{-1}(x)\cap X=\set{x^1, ..., x^m}\,\,\,\textrm{ and }\,\,\,\pi^{-1}(y)\cap Y=\set{y^1, ..., y^m}
\end{displaymath}
with $\norm{x^i-y^i}\leq c_2t^{-N_2}\norm{x-y}^{1/\delta}$.
\end{prop}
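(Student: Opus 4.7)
The plan is to reduce to a local statement where Proposition~\ref{prop:lojasiewicz} can be applied, exploiting the fact that outside $Z$ the relevant local covering degree is bounded by $\delta$ rather than by the global degree $m$. The dependence on $t$ then comes from quantifying the size of the local charts in which this reduction is valid.

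First I would fix a point $z_0 \in \Delta_{1/2}$ with $dist(z_0, Z) \geq t$ and study the restriction of $\pi$ to $X \cap \pi^{-1}(\Delta(z_0, r))$ for a suitable small $r > 0$. Over such a polydisk, $X$ decomposes near each point of $\pi^{-1}(z_0)$ into its local irreducible components, each of which is a ramified covering of some degree $m_j$ onto $\Delta(z_0, r)$. Since the multiplicity of $X$ at any point of $\pi^{-1}(z_0')$ with $z_0' \in \Delta(z_0, r) \subset \Delta_1 \setminus Z$ agrees with the local covering degree of the component through that point, the hypothesis forces $m_j \leq \delta$ for every $j$. After rescaling $\Delta(z_0, r)$ to the unit polydisk and applying Proposition~\ref{prop:lojasiewicz} to each component with $s = \delta$, one obtains
\[
\|x^i - y^i\| \leq c_1 \|x - y\|^{1/m_j} \leq c_1 \|x - y\|^{1/\delta}
\]
for $x, y \in \Delta(z_0, r/2)$, the last inequality using $\|x-y\| \leq 1$ and $m_j \leq \delta$. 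Unscaling reintroduces a factor that is polynomial in $1/r$.

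The crucial quantitative task is to show that $r$ may be chosen of the form $c_3 t^{N_3}$ uniformly in $z_0$. Here I would represent $X$ locally as the zero set of a Weierstrass polynomial $P(z, w)$ and factor it into irreducible factors; the radius in which the factorization persists can be bounded below in terms of the distance from $z_0$ to the discriminant locus of $P$. Since every point of $\Delta_1$ over which $X$ has a fibre-point of multiplicity strictly larger than $\delta$ lies in $Z$, the part of the discriminant that can obstruct local splittings is controlled by $Z$, and a Puiseux-type separation estimate on the roots of $P$ yields a lower bound on $r$ of the required polynomial form in $dist(z_0, Z)$.

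For pairs $x, y \in \Delta_{1/2}$ each at distance at least $t$ from $Z$ but with $\|x - y\|$ exceeding $r(t) = c_3 t^{N_3}$, I would connect them by a chain of $O(t^{-N_4})$ intermediate points lying at distance at least $t$ from $Z$ with consecutive distance at most $r(t)/2$, iterate the local estimate along the chain, and invoke the triangle inequality together with the monotonicity of $\|\cdot\|^{1/\delta}$ to preserve the H\"older exponent globally; the accumulated polynomial factors in $1/t$ combine into a single $t^{-N_2}$. I expect the main obstacle to be the effective separation estimate invoked above: the hypothesis provides only \emph{pointwise} multiplicity control outside $Z$, not a bound on the degree of any global irreducible factor of $X$, so converting pointwise multiplicity information into a quantitative lower bound on the size of local charts demands careful handling of the resultant and discriminant of an auxiliary Weierstrass representation.
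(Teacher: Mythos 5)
The paper does not prove Proposition~\ref{prop:lojasiewicz1}; it is cited verbatim from Dinh--Sibony~\cite{equidist_holo}, so there is no in-paper proof to compare against. On its own terms, your outline (localize to a polydisk of radius $r(t)\sim t^{N}$, apply Proposition~\ref{prop:lojasiewicz} to pieces of degree $\leq\delta$, then globalize) is the right shape, and the chain step at the end is actually unnecessary: once $\|x-y\|\geq r(t)$ one simply uses the trivial bound $\|x^i-y^i\|\leq\mathrm{diam}\,X\lesssim r(t)^{-1/\delta}\|x-y\|^{1/\delta}$, absorbing the loss into $t^{-N_2}$.

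The genuine gap is in the step you yourself flag as the main obstacle, and the mechanism you sketch for it does not close it. You want: over a polydisk $\Delta(z_0,r)$ with $r\gtrsim t^{N_3}$, the part of $X$ over $\Delta(z_0,r)$ splits into pieces each of degree $\leq\delta$. The hypothesis gives only pointwise germ multiplicities $\leq\delta$ on $\pi^{-1}(z_0')$, $z_0'\notin Z$; it does \emph{not} forbid several low-multiplicity fiber points from merging over a region disjoint from $Z$ (e.g.\ a connected degree-$3$ cover with two simple branch points, $\delta=2$, $Z=\emptyset$). Your proposed control via ``the discriminant locus of $P$'' therefore keys on the wrong object: the discriminant of a Weierstrass/characteristic polynomial vanishes wherever \emph{any} two fiber values coincide, i.e.\ on the entire branch locus, which in general strictly contains $Z$. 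A Lojasiewicz estimate against the discriminant locus yields a radius polynomial in $\mathrm{dist}(z_0,\text{branch locus})$, which can be $0$ at points where $\mathrm{dist}(z_0,Z)$ is bounded away from $0$, so it does not give $r\gtrsim t^{N_3}$. What is needed instead is a function that vanishes precisely when $\delta+1$ fiber points coincide, so that its zero set is contained in $Z$ and Lojasiewicz can be run against $\mathrm{dist}(\cdot,Z)$; this is exactly the role of the graph $\Gamma_{\delta+1}$ and the height function $h$ in Lemma~\ref{lem:heightapprox} of the present paper (in the hypersurface setting of Lemma~\ref{lem:lojasiewicz}), and of the corresponding construction in Dinh--Sibony's proof of Proposition~4.2 in~\cite{equidist_holo}. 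Without that refinement, the crucial quantitative clustering estimate, and hence the exponent $1/\delta$ with only a $t^{-N_2}$ loss, does not follow.
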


Let $V$ be an analytic hypersurface in $\proj^k$ of degree $d_V$. We denote by $[V]$ the current of integration over $V$, which is of mass $d_V$. Then, we can find a unique negative q-psh function $\varphi_V$ over $\proj^k$ such that $\sup_{\proj^k}\varphi_V=0$ and $dd^c\varphi_V=[V]-d_V\omega$. Let $0<\delta\leq d_V$ be given. Let $E_V$ be an analytic subset of $V$ such that for all $P\in V\setminus E_V$, $\multi_P(V)<\delta$. We prove the following lemma:
\begin{lem}\label{lem:lojasiewicz}
There are constants $C, A>0$ such that on $\proj^k$,
\begin{equation}\label{ineq:lojasiewicz}
\delta \log\dist(\cdot, V)+C\log\dist(\cdot, E_V)-A\leq \varphi_V \leq \log \dist(\cdot, V)+A.
\end{equation}
\end{lem}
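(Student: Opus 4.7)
The plan is to prove the two inequalities in \eqref{ineq:lojasiewicz} separately. The upper bound is standard; the lower bound is the substantive content of the lemma, where Proposition~\ref{prop:lojasiewicz1} is used in an essential way.

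For the upper bound, I would cover $\proj^k$ by a finite atlas of coordinate polydiscs $\{U_\alpha\}$, in each of which $V \cap U_\alpha$ is cut out by a single holomorphic function $f_\alpha$; then $\varphi = \log|f_\alpha| + h_\alpha$ with $h_\alpha$ smooth on a slightly shrunken subchart. Since $f_\alpha$ is Lipschitz and vanishes on $V$, one has $|f_\alpha(z)| \le L_\alpha\, dist(z,V)$, and taking logarithms and combining over the atlas yields $\varphi \le \log dist(\cdot,V) + A$.

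For the lower bound, I would cover $\proj^k$ by finitely many bidisc charts $U_\alpha \cong \Delta' \times \Delta$ with $\dim \Delta' = k-1$, chosen generically so that the projection $\pi_\alpha : U_\alpha \to \Delta'$ restricts on $V \cap U_\alpha$ to a ramified cover of some degree $m_\alpha$. By Weierstrass preparation, $V \cap U_\alpha = \{P_\alpha = 0\}$ with $P_\alpha(x,w) = \prod_{i=1}^{m_\alpha}(w - w_i(x))$, and $\varphi = \log|P_\alpha| + h_\alpha$ for a smooth $h_\alpha$. Take $Z_\alpha \subset \Delta'$ to be the proper analytic set $\pi_\alpha(E \cap V \cap U_\alpha)$ augmented by the critical values of $\pi_\alpha|_V$. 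By hypothesis, above $\Delta' \setminus Z_\alpha$ every sheet point of $V$ has intrinsic multiplicity strictly less than $\delta$, so Proposition~\ref{prop:lojasiewicz1} applies and provides a H\"older-type bound on the sheet functions $w_i$ with prefactor of order $dist(x,Z_\alpha)^{-N_2}$. For $z = (x,w) \in U_\alpha$ with $dist(z,V) > 0$, reorder so that $|w - w_1(x)|$ is the minimum, whence $|w - w_1(x)| \asymp dist(z,V)$. Split the remaining indices into ``close'' sheets $J = \{i \neq 1 : |w_i(x) - w_1(x)| \le 2\,dist(z,V)\}$ and ``far'' ones. The far factors of $|P_\alpha(z)| = \prod_i |w - w_i(x)|$ are uniformly bounded below by the chart geometry; the close factors are handled using the H\"older estimate, which bounds $|J| < \delta$ (the sheets clustering near $w_1(x)$ count the multiplicity at that point, which is strictly less than $\delta$ above $Z_\alpha$) and bounds each close factor below by a positive power of $dist(x,Z_\alpha)$. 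Multiplying the factors gives
\begin{displaymath}
|P_\alpha(z)| \ge c\, dist(z,V)^{\delta}\, dist(x,Z_\alpha)^{N}
\end{displaymath}
for uniform $c, N > 0$. Taking logs and using the Lipschitz property of $\pi_\alpha$ together with the inclusion $\pi_\alpha(E) \subset Z_\alpha$ and a sufficiently generic choice of the atlas, one converts $dist(x,Z_\alpha)$ into a constant multiple of $dist(z,E)$, producing $\varphi(z) \ge \delta \log dist(z,V) + C \log dist(z,E) - A$ on $U_\alpha$; a finite cover argument then gives the estimate globally.

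The main obstacle will be the sheet-separation analysis combined with the passage from $dist(x, Z_\alpha)$ to $dist(z, E)$. Sheets of the ramified cover cluster precisely at high-multiplicity points of $V$, where the Lojasiewicz estimate loses a negative power of $dist(\cdot, Z_\alpha)$; the careful count that at most $\delta-1$ close sheets can cluster outside $Z_\alpha$ is what generates the sharp coefficient $\delta$ in front of $\log dist(\cdot,V)$. The conversion to $dist(\cdot,E)$ is delicate because $Z_\alpha$ may contain critical values of $\pi_\alpha|_V$ that are not projections of points of $E$, so the atlas must be chosen so that each $z \notin E$ is ``well-separated'' from $\pi_\alpha(E)$ in at least one chart; this is where one adapts the strategy of \cite{equidistributionspeed}, as referenced in the introduction.
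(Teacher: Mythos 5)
Your high-level strategy (Weierstrass factorization, split sheets into close and far, use a Lojasiewicz-type input to handle clustering near $E$, convert back to $dist(\cdot,E)$) is the right shape, and your upper-bound argument is fine, but the key quantitative step is missing and the justification offered for it is incorrect. You assert that the close-sheet count $|J|$ is $<\delta$ because ``the sheets clustering near $w_1(x)$ count the multiplicity at that point,'' but multiplicity of $V$ at $(x,w_1(x))$ counts sheets that actually meet at that point, not sheets that are merely nearby. Near a branch point of $\pi_\alpha|_V$ two or more sheets are close but do not coincide, so at a nearby $z$ one can have $|J|$ far larger than any pointwise multiplicity. Proposition~\ref{prop:lojasiewicz1} does not repair this: it gives H\"older continuity of the sheet functions in the base variable, i.e.\ it controls how sheets move as $x$ moves, not how far apart the sheets are within a single fiber. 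What is actually needed is a lower bound on the separation of any $\delta+1$ sheets in a fiber in terms of $dist(z,E)$ (so that one can fix a ball of radius $\approx dist(z,E)^{N}$ and know it meets at most $\delta$ local components). The paper obtains exactly this by introducing the symmetric-tuple graph $\Gamma_{\delta+1}$ and a height function $h(z)$, and proving a fresh Lojasiewicz inequality $h(z)\gtrsim dist(z,E)^{M_3}$ (Lemma~\ref{lem:heightapprox}); Proposition~\ref{prop:lojasiewicz} is then used only in a secondary ``gap'' argument. Without this separation estimate your $|J|<\delta$ step is unjustified.

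Two further points that would also need to be fixed. First, with your threshold $2\,dist(z,V)$ the ``far'' factors are only bounded below by $dist(z,V)$ (triangle inequality), not by a constant depending on the chart, so the far product contributes another $dist(z,V)^{m-\delta}$ rather than a harmless constant. The correct device (as in the paper) is to fix the close/far threshold at a radius $R\approx dist(z,E)^{N_3\delta}$, independent of $dist(z,V)$; then close factors are trivially $\geq dist(z,V)$ (at most $\delta$ of them, by the separation estimate), and far factors are $\geq R$, giving precisely $\delta\log dist(z,V)+C\log dist(z,E)-A$. Second, your $Z_\alpha$ contains the critical values of $\pi_\alpha|_V$, which generically form a hypersurface in $\Delta'$ much larger than $\pi_\alpha(E)$; so $dist(x,Z_\alpha)$ can be small for $z$ far from $E$, and the inequality $|P_\alpha(z)|\gtrsim dist(z,V)^\delta dist(x,Z_\alpha)^N$ does not convert into the desired $C\log dist(\cdot,E)$ bound by a genericity-of-atlas argument alone. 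The paper avoids this entirely: the Lojasiewicz estimate in Lemma~\ref{lem:heightapprox} is phrased directly in terms of $dist(z,E)$, sidestepping the branch locus of the projection.
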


We use the proofs of Proposition 4.2 in ~\cite{equidistributionspeed} and Lemma 2.2.5 in ~\cite{superpotentials}.

\begin{proof}
We first consider the left-side of Inequality \ref{ineq:lojasiewicz}. Since $V$ is compact and $\varphi_V$ is smooth outside $V$, this problem is of local nature. It is enough to prove Inequality \ref{ineq:lojasiewicz} in a neighborhood $U_x$ of $x\in V$.

Consider a point $x\in V$ and denote $m=\multi_x(V)$. Then, we can find small polydiscs $\Delta_x\subset\cplx^k$ of complex dimension $k$ centered at $x$ and $\Delta'_x\subset\Delta_x$ of complex dimension $k-1$, and a projection map $\pi:\Delta_x\cap V\to\Delta'_x$ defining a $m$-sheeted ramified covering over $\Delta'_x$. Fix a local coordinate chart $(z_1, ..., z_k)$ for $\Delta_x$ such that $\pi(z_1, ..., z_k)=(z_1, ..., z_{k-1})$. For convenience, we write $(z_1, ..., z_k)=(z', z_k)$.

For each $z'\in\Delta'_x$, we can find $m$ points $\set{z^1, \cdots, z^m}$ such that $\pi(z^j)=z'$ where $z^j$'s are repeated according to their multiplicities. Let ${z^j}_k$ denote the $k$-th coordinate of $z^j$. Consider a Weierstrass polynomial $H(z)$ defined by
\begin{equation}\label{eqn:H}
H(z):=(z_k-{z^1}_k)\cdots(z_k-{z^m}_k).
\end{equation}
Its zero set is $V$ and it is a holomorphic function on $\Delta_x$. Shrink $\Delta_x$ and $\Delta'_x$ correspondingly with the ramified covering structure preserved, if necessary. Then, since $\varphi_V(z)-\log\abs{H(z)}$ is a smooth function in a neighborhood of $\overline{\Delta_x}$, we only need to show that in a neighborhood $U_x\subseteq \Delta_x$ of $x\in V$,
\begin{equation}\label{ineq:2ndlojasiewicz}
\delta \log{\dist(\cdot, V)}+C_x\log{\dist(\cdot, E_V)}-A_x\leq \log\abs{H(\cdot)}
\end{equation}
for some constants $C_x, A_x>0$.

For this inequality, we claim that there exists a neighborhood $U_x\subseteq \Delta_x$ of $x\in V$ such that for any $z\in U_x$, the ball $B_z$ with its center at $z$ and of radius $\frac{1}{3}{\gamma_3}(\frac{1}{2}\dist_{\mathrm{euc}}(z, E_V))^{N_3\delta}$ with respect to the Euclidean metric intersects at most $\delta$ irreducible components of $V$ for some ${\gamma_3}>0$ and $N_3>0$ both of which are independent of $z$. Our claim works as follows. From Equation \ref{eqn:H}, we see that $\abs{H(z)}$ is bounded below by the product of the distances to each irreducible component of $V$ in $\Delta_x$. If $\delta<m$, then since $B_z$ intersects at most $\delta$ irreducible components of $V$, we have that in $U_x$
\begin{displaymath}
\dist_{\mathrm{euc}}(\cdot, V)^\delta\left[\frac{1}{3}{\gamma_3}\left(\frac{1}{2}\dist_{\mathrm{euc}}(\cdot, E_V)\right)^{N_3\delta}\right]^{m}\leq \abs{H(\cdot)}
\end{displaymath}
and then,
\begin{displaymath}
\delta\log \dist_{\mathrm{euc}}(\cdot, V)+m\log \left(\frac{1}{3}{\gamma_3}\left(\frac{1}{2}\dist_{\mathrm{euc}}(\cdot, E_V)\right)^{N_3\delta}\right)\leq\log\abs{H(\cdot)}.
\end{displaymath}
So, since the Fubini-Study metric and the Euclidean metric are locally equivalent, we take $C_x=m\delta N_3$ and $A_x=-m\log\left(\frac{1}{3}{\gamma_3}\left(\frac{1}{2}\right)^{N_3\delta}\right)+c_{\mathrm{FE}, x}$ for $U_x$, where $c_{\mathrm{FE}, x}$ is a constant due to the difference of the Fubini-Study metric and the Euclidean metric in $\Delta_x$.
If $\delta\geq m$, our claim becomes trivial and $C_x=0, A_x=c_{\mathrm{FE}, x}$ in $\frac{1}{2}\Delta_x$. In particular, if $x\in V\setminus E_V$, then, $C_x=0, A_x=c_{\mathrm{FE}, x}$ in $\frac{1}{2}\Delta_x$. Hence, since $E_V$ is compact, without loss of generality, we may assume that $x\in E_V$ and prove our claim. In the remaining of the proof, the distance is measured with respect to the Euclidean distance. Express $H(z)$ as
\begin{displaymath}
H(z)=[z_k]^m+a_{m-1}(z')[z_k]^{m-1}+\cdots+a_0(z'),
\end{displaymath}
where $a_l(z')$ are holomorphic functions of $z'$ over $\Delta'_x$ for $l=0, ..., m-1$. Define an analytic subset $\Gamma\subset\Delta'_x\times\Delta_x^m$ by $(z', z^1, ..., z^m)\in\Gamma$ if and only $\sum {z^i}_k=-a_{m-1}(z')$, $\sum_{i<j} {z^i}_k{z^j}_k=a_{m-2}(z')$, ..., $\prod {z^i}_k=(-1)^ma_0(z')$. Define $\pi^m_{q+1}:\Delta'_x\times\Delta_x^m\to\Delta'_x\times\Delta_x^{q+1}$ by $\pi^m_{q+1}(z', z^1, ..., z^m)=(z', z^1, ..., z^{q+1})$. Remmert's Proper Mapping Theorem implies that the image $\pi_{q+1}^m(\Gamma)\subset\Delta'_x\times\Delta_x^{q+1}$ of $\Gamma$ under $\pi_{q+1}^m$ is an analytic subset. We denote $\Gamma_{q+1}:=\pi_{q+1}^m(\Gamma)$. Observe that all $z^j$'s for $j=1, ..., q+1$ in $(z', z^1, ..., z^{q+1})\in\Gamma_{q+1}$ have the same first $k-1$ coordinates which equal $z'$.

The following is a modification of Lemma 4.3. in \cite{equidistributionspeed}.
For each $z=(z', z_n)\in\Delta_x\cap V$, define
\begin{displaymath}
h(z)=\min_{\substack{P_z\in\Gamma_{\delta+1}
\\z^1=z}}\sum_{1\leq i, j\leq \delta+1}\norm{z^i-z^j},
\end{displaymath}
where $P_z=(z', z^1, ..., z^{\delta+1})\in\Gamma_{\delta+1}$, and $z^i$ and $z^j$ are the $i$-th and $j$-th coordinates of $P_z$, respectively.

\begin{lem}\label{lem:heightapprox}
There exist $M_3\geq 1$ and ${A_3}>0$ such that for $z\in\frac{1}{2}\Delta_x\cap V$,
\begin{displaymath}
h(z)\geq {A_3}^{-1}\dist_{\mathrm{euc}}(z, E_V)^{M_3}
\end{displaymath}
\end{lem}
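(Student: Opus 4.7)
The plan is to deduce the lemma from the real-analytic Lojasiewicz inequality applied to a polynomial function on the analytic subvariety $\Gamma_{\delta+1}$. To sidestep the non-analyticity of the Euclidean norm, first pass to the comparable quantity
\begin{displaymath}
\tilde h(z) := \min_{\substack{P\in\Gamma_{\delta+1} \\ z^1 = z}} \sum_{1\leq i_p, i_q\leq \delta+1} \norm{z^{i_p}-z^{i_q}}^2,
\end{displaymath}
where $P = (z', z^1, \ldots, z^{\delta+1})$ ranges over tuples in $\Gamma_{\delta+1}$ with $\pi_0^{\delta+1}(P)=\pi(z)$. Since the sum has a fixed finite number of terms, $h(z)^2$ and $\tilde h(z)$ differ by bounded multiplicative constants on $V\cap\Delta$, so it suffices to bound $\tilde h$ below by $dist(z,E)^N$ for some $N$.

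Now view the polynomial $\Psi(z', z^1, \ldots, z^{\delta+1}) := \sum\norm{z^{i_p}-z^{i_q}}^2$ as a real-analytic function on $\Gamma_{\delta+1}$. Its zero locus $\Sigma\subset\Gamma_{\delta+1}$ is the analytic diagonal $\set{z^1=\cdots=z^{\delta+1}}$, parametrising those tuples $(z', w, \ldots, w)$ for which $w\in V\cap\Delta$ appears in the fiber $\pi^{-1}(\pi(w))\cap V$ with multiplicity at least $\delta+1$. After choosing the coordinate chart so that the projection $\pi$ is generic on all of $V\cap\Delta$ (shrinking $\Delta$ if necessary), the fiber multiplicity of $\pi$ at every $w\in V\cap\Delta$ coincides with $multi_w(V)$; hence, letting $\Sigma_1\subset V\cap\Delta$ denote the image of $\Sigma$ under the projection $(z', z^1, \ldots, z^{\delta+1})\mapsto z^1$, we obtain $\Sigma_1\subseteq \set{w\in V\cap\Delta: multi_w(V)\geq\delta+1}\subseteq E\cap\Delta$.

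The standard Lojasiewicz inequality applied to $\Psi|_{\Gamma_{\delta+1}}$ on a compact piece of $\Gamma_{\delta+1}$ that contains the relevant fibers over $\frac{1}{2}\Delta$ yields constants $c>0$ and $N\geq 1$ with $\Psi(P)\geq c\cdot dist(P,\Sigma)^N$. For $z\in V\cap\frac{1}{2}\Delta$ and any admissible $P$ with first point coordinate equal to $z$, projecting onto this coordinate contracts distances, giving
\begin{displaymath}
dist(P,\Sigma)\geq dist(z,\Sigma_1)\geq dist(z,E).
\end{displaymath}
Minimising over $P$ yields $\tilde h(z)\geq c\cdot dist(z,E)^N$, and combining with the comparison between $\tilde h$ and $h^2$ produces $h(z)\geq A_3^{-1}\cdot dist(z,E)^{M_3}$ with $M_3=N/2$.

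The main technical obstacle is the middle step, where we need the coordinate projection $\pi$ (initially chosen only to realise $multi_x(V) = m$ at the base point $x$) to remain generic on the whole of $V\cap\Delta$, so that the fiber multiplicities coincide with the intrinsic multiplicities used in the definition of $E$. If $\pi$ degenerates along some auxiliary analytic subset of $V\cap\Delta$, then $\Sigma$ could project to points where $multi_w(V)<\delta$, breaking the inclusion $\Sigma_1\subseteq E$ and collapsing the final chain of inequalities; resolving this requires a careful perturbation of the chart, possibly combined with further shrinking of $\Delta$.
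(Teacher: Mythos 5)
Your argument follows essentially the same route as the paper: encode the tuples of fiber points into an analytic set built from $\Gamma_{\delta+1}$, identify the "collapsed-fiber" locus, and apply a Lojasiewicz inequality. The paper uses the Lojasiewicz inequality comparing $dist(P, X_2)$ with $dist(P, X_1\cap X_2)$ for two analytic sets $X_1, X_2$ in a product space (which records the differences $z^{i_p}-z^{i_q}$ directly), while you pass to squared norms and use the function-versus-zero-locus form $\Psi(P)\gtrsim dist(P,\Sigma)^N$; these are standard variants of the same inequality and the difference is cosmetic. Your $M_3 = N/2$ should be $\max\{N/2,1\}$ to meet the stated normalization $M_3\geq 1$, but that is trivial.

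The substantive point is the one you flag at the end, and you are right to worry about it: the conclusion needs the inclusion $\Sigma_1\subseteq E$ (equivalently, in the paper's notation, that $X_1\cap X_2$ lies over $E$), and that inclusion is an assertion about fiber multiplicities of the \emph{fixed} Weierstrass projection $\pi$, not about the intrinsic multiplicities $multi_w(V)$ that define $E$. The projection is chosen only to realize $multi_x(V)=m$ at the base point, and at other points $w\in V\cap\Delta$ the fiber multiplicity of $\pi$ can strictly exceed $multi_w(V)$ (the cusp $y^2=x^3$ projected to the $y$-axis is the standard example). The paper handles this step with a one-line remark ("the multiplicity of $V$ is $<\delta+1$ outside $E$") without distinguishing the two notions of multiplicity, so the paper's proof is no more explicit about this point than yours. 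However, since you explicitly stop short of resolving it — sketching only a hope that a perturbation of the chart together with further shrinking of $\Delta$ would fix the inclusion — your proof as written is incomplete. To close it one would need to argue, e.g., that after shrinking $\Delta$ the degeneracy locus of $\pi$ on $V\cap\Delta$ is contained in $E$, or rework the statement so that the lower bound is in terms of the fiber-multiplicity exceptional set (and then check the later applications of Lemma~\ref{lem:lojasiewicz} go through). As it stands, the missing step is exactly the one carrying all the content.
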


\begin{proof}
Let $X_1$ be the set of points $(z, z', z^I)\in\Delta_x\times\Delta'_x\times(\cplx^k)^{(\delta+1)^2}$ for $P_z=(z', z^1, ..., z^{q+1})\in\Gamma_{\delta+1}$ where $z=z^1$ and $z^I$ is defined by $z^i-z^j$ with $1\leq i, j\leq \delta+1$. Since $\Gamma_{\delta+1}$ is analytic, $X_1$ is analytic. The set $X_1$ can be seen as a ramified covering over $\Delta'_x$. Let $X_2:=\Delta_x\times\Delta'_x\times\set{0}\subset \Delta_x\times\Delta'_x\times(\cplx^k)^{(\delta+1)^2}$.

A Lojasiewicz inequality (for example, see p. 14, p. 62 in \cite{idealsofdiffftns}) proves that for sufficiently large ${A_3}, M_3$, for all $P\in X_1\cap (\frac{1}{2}\Delta_x\times\frac{1}{2}\Delta'_x\times(\cplx^k)^{(\delta+1)^2})$,
\begin{displaymath}
\dist_{\mathrm{euc}}(P, X_2)\geq {A_3}^{-1}\dist_{\mathrm{euc}}(P, X_1\cap X_2)^{M_3}.
\end{displaymath}
Thus, for $z\in \frac{1}{2}\Delta_x\cap V$,
\begin{displaymath}
h(z)\geq \dist_{\mathrm{euc}}(P, X_2)\geq {A_3}^{-1}\dist_{\mathrm{euc}}(P, X_1\cap X_2)^{M_3}\geq {A_3}^{-1}\dist_{\mathrm{euc}}(z, E_V)^{M_3}.
\end{displaymath}
For the last inequality, the multiplicity of $V$ is $<\delta+1$ outside $E_V$.
\end{proof}

We continue the proof of our claim. Define $N_3:=M_3m$ with $M_3$ in the previous lemma. Fix a sufficiently small ${\gamma_3}>0$. Consider $z=(z', z_k)\in V$ such that $\norm{z}\leq\frac{1}{4}r$ in the coordinate system of $\Delta_x$ where $r$ is the smallest polyradius of $\Delta_x$. Take $t=\frac{1}{2}\dist_{\mathrm{euc}}(z, E_V)$. Let $B'$ be the ball of center $z'$ and of radius ${\gamma_3} t^{N_3\delta}$ in $\Delta_x'$. For sufficiently small ${\gamma_3}>0$, we have that for all $z\in V$ with $\norm{z}\leq\frac{1}{4}r$ every open neighborhood of the form $B'\times B_{z_k}((8m+2)c_3{\gamma_3}^{1/m}t^{N_3\delta/m})$ sits inside $\Delta_x$, where $c_3$ is the constant from the application of Proposition \ref{prop:lojasiewicz} to this case and $B_{z_k}((8m+2)c_3{\gamma_3}^{1/m}t^{N_3\delta/m})$ denotes the ball in $\cplx$ of center $z_k$ and of radius $(8m+2)c_3{\gamma_3}^{1/m}t^{N_3\delta/m}$. Therefore, we can use the coordinate system of $\Delta_x$ for such open neighborhoods.

We show that the connected component of $V\cap\pi^{-1}(B')$ containing $z$ defines a ramified covering of degree $\leq \delta$ over $B'$ and estimate the size of a polydisc containing the connected component. As in Lemma 4.3 of \cite{equidistributionspeed}, there is an integer $2\leq l\leq 8m$ such that if $\tilde{z}=(z', \tilde{z_k})\in\pi^{-1}(z')\cap V$, we have either $\norm{z-\tilde{z}}\leq(l-2)c_3{\gamma_3}^{1/m}t^{N_3\delta/m}$ or $\norm{z-\tilde{z}}\geq(l+2)c_3{\gamma_3}^{1/m}t^{N_3\delta/m}$.

Let $B''$ denote the ball of center $z_k$ and of radius $lc_3{\gamma_3}^{1/m}t^{N_3\delta/m}$ in $\cplx$ and $\partial B''$ its boundary. Note that $\dist(\pi^{-1}(z')\cap V, B'\times\partial B'')>c_3{\gamma_3}^{1/m}t^{N_3\delta/m}$. Proposition ~\ref{prop:lojasiewicz} gives us $\dist_{\mathrm{euc}}(y, \pi^{-1}(z')\cap V)\leq c_3\norm{y'-z'}^{1/m}$, where $y'\in B'$ and $y\in\pi^{-1}(y')\cap V$. So, $V\cap(B'\times\partial B'')=\emptyset$ and therefore, $\pi$ is proper on $V\cap(B'\times B'')$ over $B'$.

We want to show that the degree of the covering is $\leq \delta$. If not, $h(z)$ is of order ${\gamma_3}^{1/m}t^{N_3\delta/m}\ll {A_3}^{-1}t^{N_3/m}={A_3}^{-1}t^{M_3}$ because ${\gamma_3}$ is small. Note that $z$ obviously sits inside $\frac{1}{2}\Delta_x$ from our choice of $z$ such that $\norm{z}\leq\frac{1}{4}r$ and therefore, Lemma ~\ref{lem:heightapprox} is valid. This is a contradiction since $h(z)\geq {A_3}^{-1}\dist_{\mathrm{euc}}(z, E_V)^{M_3}\geq {A_3}^{-1}t^{M_3}$ from Lemma ~\ref{lem:heightapprox}.

So, the box $B'\times B''$ contains at most $\delta$ locally irreducible components. The ball of radius ${\gamma_3} t^{N_3\delta}=\min\set{c_3{\gamma_3}^{1/m}t^{N_3\delta/m}, {\gamma_3} t^{N_3\delta}}$ 
and of center $z\in V$ such that $\norm{z}\leq\frac{1}{4}r$ can intersect at most $\delta$ irreducible components and the distances to the other irreducible components from $z$ are $>\gamma_3 t^{N_3\delta}$.

Take the neighborhood $U_x$ of $x\in E_V$ 
to be
the ball $\set{\norm{z}<\frac{1}{4}r}\subseteq \Delta_x$ in the coordinate system of $\Delta_x$. This is the desired open neighborhood of $x$. Indeed, for an arbitrary point $z\in U_x$, the ball $B_z$ centered at $z$ and of radius $\frac{1}{3}{\gamma_3}(\frac{1}{2}\dist_{\mathrm{euc}}(z, E_V))^{N_3\delta}$ can contain at most $\delta$ irreducible components of $V$. Hence, our claim is proved.

The right-side of Inequality ~\ref{ineq:lojasiewicz} is from Lemma 2.2.5 in ~\cite{superpotentials}. It is simply from the compactness of $V$.
\end{proof}

\section{Currents}\label{sec:currents}
In this paper, we assume some familiarity of the reader to currents. For details, see, for example, \cite{Demailly}, Appendix of \cite{dynamicsinscv}, \cite{federer}, \cite{lelong}, \cite{ueda} and \cite{sibony}. For the convenience of the reader, we summarize some concepts, properties and remarks.

For a positive $(p,p)$-current $S$ on $\proj^k$, the mass of $S$ is equivalent to 
\begin{displaymath}
\norm{S}:=\agl{S, \omega^{k-p}}=\int_{\proj^k}S\wedge\omega^{k-p}.
\end{displaymath}
In the same way, for negative $(p,p)$-currents, the mass of $S'$ is equivalent to $\norm{S'}:=\agl{-S',\omega^{k-p}}$. 
In the rest of this paper, we denote by $\cali{C}_p$ the set of positive closed $(p,p)$-currents of mass $1$.

We recall some properties of $\cali{C}_p$. The space $\cali{C}_p$ is a Polish space. It is compact in the current sense. Moreover, the weak topology on $\cali{C}_p$ is metrizable (see \cite{superpotentials}). Indeed, for $\alpha\geq 0$, let $[\alpha]$ denote the largest integer $\leq \alpha$. Let $\cali{C}_q^\alpha$ be the space of $(q, q)$-forms whose coefficients admit derivatives of all orders $\leq [\alpha]$ and these derivatives are $(\alpha-[\alpha])$-H\"{o}lder continuous. We use here the sum of $\cali{C}^\alpha$-norms of the coefficients for a fixed atlas. If $R$ and $R'$ are currents in $\cali{C}_p$, we define
\begin{displaymath}
\dist_\alpha(R, R'):=\sup_{\norm{\Phi}_{\cali{C}^\alpha}\leq 1}\abs{\agl{R-R', \Phi}},
\end{displaymath}
where $\Phi$ is a smooth $(k-p, k-p)$-form on $\proj^k$.
\begin{prop}[See Proposition 2.1.4 in \cite{superpotentials}]
Then, the topology induced by the distance $\dist_\alpha$ with any $\alpha>0$ is equivalent to the weak topology on $\cali{C}_p$.
\end{prop}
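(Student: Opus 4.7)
\smallskip

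\noindent\textbf{Proof proposal.} Since the weak topology on $\cali{C}_p$ is known to be metrizable and $dist_\alpha$ is itself a metric, it suffices to show that both topologies produce the same convergent sequences. Throughout, fix $\alpha > 0$ and assume $R_n, R \in \cali{C}_p$.

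The implication $dist_\alpha(R_n, R) \to 0 \Rightarrow R_n \to R$ weakly is immediate. Indeed, any smooth test form $\Phi$ on $\proj^k$ has finite $\cali{C}^\alpha$-norm, and the bound
$$|\agl{R_n - R, \Phi}| \leq \norm{\Phi}_{\cali{C}^\alpha}\cdot dist_\alpha(R_n, R)$$
shows that $\agl{R_n, \Phi} \to \agl{R, \Phi}$. Hence one direction needs no further work.

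For the reverse implication, suppose $R_n \to R$ weakly, and assume by contradiction that $dist_\alpha(R_n, R) \not\to 0$. Extracting a subsequence, there exist $\eps_0 > 0$ and $(k-p,k-p)$-forms $\Phi_n$ with $\norm{\Phi_n}_{\cali{C}^\alpha}\leq 1$ such that $|\agl{R_n-R,\Phi_n}| \geq \eps_0$. Since $\alpha > 0$, by the Arzel\`a--Ascoli theorem applied on a finite atlas of $\proj^k$, the unit ball of $\cali{C}^\alpha$ embeds compactly in $\cali{C}^0$. Passing to a further subsequence we may assume $\Phi_n \to \Phi$ uniformly, with $\Phi$ a continuous $(k-p,k-p)$-form. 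Writing
$$\agl{R_n-R,\Phi_n} = \agl{R_n-R,\Phi} + \agl{R_n-R,\Phi_n-\Phi},$$
the positivity of $R_n, R$ and the uniform mass bound $\norm{R_n}=\norm{R}=1$ give $|\agl{R_n-R,\Phi_n-\Phi}| \leq C\norm{\Phi_n-\Phi}_{\cali{C}^0}\to 0$, where $C$ depends only on $k$ and $p$.

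It remains to show that $\agl{R_n-R,\Phi} \to 0$ for a continuous $\Phi$. For $\eta > 0$, use regularization in charts with a partition of unity to produce a smooth form $\Phi_\eta$ with $\norm{\Phi-\Phi_\eta}_{\cali{C}^0} < \eta$. Weak convergence gives $\agl{R_n-R,\Phi_\eta} \to 0$, while the same mass bound gives $|\agl{R_n-R,\Phi-\Phi_\eta}|\leq C\eta$. Hence $\limsup_n |\agl{R_n-R,\Phi}| \leq C\eta$, and letting $\eta\to 0$ yields $\agl{R_n-R,\Phi}\to 0$. This contradicts the lower bound $\eps_0$.

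The main technical point is the compactness of the embedding $\cali{C}^\alpha \hookrightarrow \cali{C}^0$ and the elementary regularization of continuous forms by smooth ones on $\proj^k$; both are standard but essential, and the argument breaks down at $\alpha = 0$, which explains the assumption $\alpha > 0$ in the statement.
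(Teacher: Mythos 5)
Your proof is correct. The paper does not reproduce a proof of this proposition (it is cited directly from Dinh--Sibony), but your argument — one direction by definition of $dist_\alpha$, the other by Arzel\`a--Ascoli compactness of the $\cali{C}^\alpha$-unit ball in $\cali{C}^0$, combined with the uniform mass bound on positive currents and smoothing of continuous test forms — is the standard proof of this fact and matches the approach in the cited reference.
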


Another property of $\cali{C}_p$ is that the smooth forms are dense in $\cali{C}_p$. 
We regularize $R\in\cali{C}_p$ using the automorphisms of $\proj^k$ in the following way (for example, see \cite{superpotentials}). Consider an endomorphism $h_\theta$ of $\set{\norm{\zeta}_A<1}$ defined by $h_\theta(\zeta):=\theta \zeta$ for $\theta\in\cplx$ with $\abs{\theta}<1$, where $\set{\norm{\zeta}_A<1}$ is the coordinate chart of $\Aut(\proj^k)$ introduced in Introduction. Define $\rho_\theta:=(h_\theta)_*(\rho)$. 
\begin{defn}
For any positive or negative $(p, p)$-current $R$ on $\proj^k$, we define the \emph{$\theta$-regularization $R_\theta$} of $R$ by
\begin{displaymath}
R_\theta:=\int_{\Aut(\proj^k)}(\tau_\zeta)_*Rd\rho_\theta(\zeta)=\int_{\Aut(\proj^k)}(\tau_{\theta \zeta})_*Rd\rho(\zeta)=\int_{\Aut(\proj^k)}(\tau_{\theta \zeta})^*Rd\rho(\zeta),
\end{displaymath}
where $\tau_\zeta$ is the automorphism in $\Aut(\proj^k)$ whose coordinate is $\zeta$.
\end{defn}
If $R$ is positive and closed, then so is $R_\theta$. If $\abs{\theta}=\abs{\theta'}$, then $R_\theta=R_{\theta'}$. The mass of $R_\theta$ does not depend on $\theta$ (see Lemma 2.4.1 in \cite{superpotentials}). The $\theta$-regularization has the following estimate property:

\begin{prop}[See Proposition 2.1.6 in ~\cite{superpotentials}]\label{prop:216}
If $\theta\neq 0$, then $R_\theta$ is a smooth form which depends continuously on $R$. Moreover, for every $\alpha\geq 0$, there is a constant $c_\alpha$ independent of $R$ such that
\begin{displaymath}
\norm{R_\theta}_{\cali{C}^\alpha}\leq c_\alpha\norm{R}\abs{\theta}^{-2k^2-4k-\alpha}.
\end{displaymath}
If $K$ is a compact subset in $\set{\theta\in\cplx:\abs{\theta}<1}\setminus\set{0}$, then there is a constant $c_{\alpha, K}>0$ such that for $\theta, \theta'\in K$,
\begin{displaymath}
\norm{R_\theta-R_{\theta'}}_{\cali{C}^\alpha}\leq c_{\alpha, K}\norm{R}\abs{\theta-\theta'}.
\end{displaymath}
\end{prop}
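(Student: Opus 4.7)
The plan is to use the representation $R_\theta = \int_{\Aut(\proj^k)} (\tau_{\theta y})_* R \, d\mu(y)$, in which all smoothing comes from the smooth, compactly supported, radial probability measure $\mu$ on the local coordinate chart near $id$. First, for fixed $\theta \neq 0$, I will check that $R_\theta$ is a smooth form. For any smooth test form $\Phi$ on $\proj^k$ of complementary bidegree,
$$\agl{R_\theta, \Phi} = \int \agl{R, (\tau_{\theta y})^* \Phi} \, d\mu(y),$$
and the map $(y, x) \mapsto \tau_{\theta y}(x)$ is smooth in both variables and compactly supported in $y$. After the change of variables given by $(h_\theta)_* \mu = \mu_\theta$, this realises $R_\theta$ as the pairing of $R$ against a smooth kernel on $\Aut(\proj^k) \times \proj^k$, so $R_\theta$ is smooth. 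Continuous dependence $R \mapsto R_\theta$ in the current topology is then immediate because the kernel is smooth and the pairing is linear.

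The key estimate rests on the fact that $\Aut(\proj^k) = PGL(k+1, \cplx)$ has complex dimension $k^2 + 2k$, hence real dimension $2k^2 + 4k$. In the chosen coordinates the density of $\mu_\theta$ with respect to Lebesgue measure is
$$\rho_\theta(y) = \abs{\theta}^{-(2k^2+4k)} \rho(y/\theta),$$
where $\rho$ is the density of $\mu$, so $\norm{\rho_\theta}_{\cali{C}^\alpha} \leq c_\alpha \abs{\theta}^{-2k^2-4k-\alpha}$. To pass from this to a $\cali{C}^\alpha$ bound on $R_\theta$, the strategy is to transfer spatial derivatives in $x \in \proj^k$ to derivatives in $y$ acting on $\rho_\theta$: transitivity of $\Aut(\proj^k)$ on $\proj^k$ together with the smooth (and, on a neighbourhood of $id$, uniformly bounded) dependence of $\tau_y(x)$ on $y$ lets one rewrite any spatial derivative of the kernel as a bounded combination of $y$-derivatives. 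Each transferred derivative then costs only a geometric constant times a factor $\abs{\theta}^{-1}$ from the rescaling $y \mapsto y/\theta$ in $\rho_\theta$. Applying this $[\alpha]$ times together with the H\"older seminorm of the $[\alpha]$-th derivative gives $\norm{R_\theta}_{\cali{C}^\alpha} \leq c_\alpha \norm{R} \abs{\theta}^{-2k^2-4k-\alpha}$.

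For the Lipschitz-type bound on $\theta, \theta' \in K \Subset \Delta \setminus \set{0}$, I would differentiate $R_\theta$ in the parameter $\theta$. Since $K$ is a positive distance from $0$, the family $\theta \mapsto \rho_\theta$ is smooth with $\cali{C}^\alpha$-norms of $\partial_\theta \rho_\theta$ uniformly bounded on $K$. Writing
$$R_\theta - R_{\theta'} = \int_0^1 \partial_s R_{\theta' + s(\theta-\theta')} \, ds$$
and applying the argument of the previous paragraph to $\partial_\theta \rho_\theta$ in place of $\rho_\theta$ yields $\norm{R_\theta - R_{\theta'}}_{\cali{C}^\alpha} \leq c_{\alpha, K} \norm{R} \abs{\theta - \theta'}$. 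The main technical obstacle throughout is the transfer of $x$-derivatives to $y$-derivatives, because $\tau_y$ is not literally a translation on $\proj^k$; this is where the structure of $\Aut(\proj^k)$ genuinely enters, and it is what dictates the exponent $2k^2 + 4k$ in the estimate.
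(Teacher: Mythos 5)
Your argument is correct and follows the same line as the proof of Proposition 2.1.6 in Dinh--Sibony's super-potentials paper, which is the source the present paper cites without reproducing the argument. The rescaling $\rho_\theta(y)=\abs{\theta}^{-(2k^2+4k)}\rho(y/\theta)$, the real dimension count for $\Aut(\proj^k)$, and the transfer of $x$-derivatives to $y$-derivatives via the submersivity of $y\mapsto\tau_y(x)$ (transitivity of the action) are exactly the ingredients of that proof.
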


Also, when $R$ is of class $\cali{C}^\alpha$ for $\alpha\geq 0$, we have
\begin{lem}[See Lemma 2.1.8 in ~\cite{superpotentials}]\label{lem:218}
Let $K$ be a compact subset of $\Aut(\proj^k)$. Let $W$ and $W_0$ be open sets in $\proj^k$ such that $\overline{W_0}\subset\tau(W)$ for every $\tau\in K$. If $R$ is of class $\cali{C}^\alpha$ on $W$ with $\alpha\geq 0$, then $\tau_*(R)$ is of class $\cali{C}^\alpha$ on $W_0$. Moreover, there is a constant $\tilde{c}>0$ such that for all $\tau, \tau'\in K$,
\begin{displaymath}
\norm{\tau_*(R)}_{\cali{C}^\alpha(W_0)}\leq \tilde{c}\norm{R}_{\cali{C}^\alpha(W)}
\end{displaymath}
and
\begin{displaymath}
\norm{\tau_*(R)-\tau_*'(R)}_{\cali{C}^\alpha(W_0)}\leq \tilde{c}\norm{R}_{\cali{C}^\alpha(W)}\dist_\Aut(\tau, \tau')^{\min\set{\alpha, 1}},
\end{displaymath}
where the distance $\dist_\Aut(\cdot, \cdot)$ is with respect to a fixed smooth metric on $\Aut(\proj^k)$.
\end{lem}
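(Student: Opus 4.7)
The plan is to reduce the statement to a finite number of local coordinate calculations. Since $K$ is compact and $\overline{W_0}\subset\tau(W)$ for every $\tau\in K$, a standard compactness argument produces a finite open cover $\set{B_j}$ of $\overline{W_0}$ by coordinate balls, together with open sets $B'_j\Subset W$, such that $\tau^{-1}(B_j)\Subset B'_j$ for every $\tau\in K$ and every $j$. On each $B_j$ one has $\tau_*R=(\tau^{-1})^*R$, since $\tau$ is a biholomorphism and $R$ is a smooth $(p,p)$-form. In local coordinates, the coefficients of $(\tau^{-1})^*R$ on $B_j$ are obtained by substituting the components of $\tau^{-1}$ into the coefficients of $R$ and multiplying by a polynomial in the first partial derivatives of $\tau^{-1}$. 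The family $\set{\tau^{-1}:\tau\in K}$ is again a compact subset of $\Aut(\proj^k)$, so the $\cali{C}^m$-norms of its elements in any fixed atlas are uniformly bounded for every $m\geq 0$.

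For the first estimate, apply the classical Fa\`a di Bruno chain-rule formula: each partial $\partial^I$ with $|I|\leq[\alpha]$ of a coefficient of $\tau_*R$ on $B_j$ is a polynomial in the partial derivatives of order $\leq[\alpha]$ of the coefficients of $R$ on $B'_j$ and the partial derivatives of order $\leq[\alpha]+1$ of $\tau^{-1}$, the latter being uniformly bounded by the previous paragraph. The fractional H\"older part of exponent $\alpha-[\alpha]$ follows from the same substitution combined with the uniform Lipschitz bound on $\tau^{-1}$. This produces $\norm{\tau_*R}_{\cali{C}^\alpha(W_0)}\leq c\norm{R}_{\cali{C}^\alpha(W)}$ with $c$ depending only on $K$ and the fixed atlas.

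For the continuity in $\tau$, I would treat two cases. When $0\leq\alpha<1$, evaluate the coefficients of $\tau_*R-\tau'_*R$ pointwise on $B_j$; each difference has the shape $a(x)[R_I(\tau^{-1}(x))-R_I(\tau'^{-1}(x))]+[a(x)-a'(x)]R_I(\tau'^{-1}(x))$, where $a$ and $a'$ are polynomials in the first partials of $\tau^{-1}$ and $\tau'^{-1}$ respectively. The first bracket is bounded by
\begin{displaymath}
\norm{R}_{\cali{C}^\alpha(W)}\,|\tau^{-1}(x)-\tau'^{-1}(x)|^\alpha\leq c\,\norm{R}_{\cali{C}^\alpha(W)}\,dist(\tau,\tau')^\alpha,
\end{displaymath}
using smoothness of the inversion map on $\Aut(\proj^k)$ and compactness of $K$; the second bracket is controlled by $c\norm{R}_{\cali{C}^0(W)}dist(\tau,\tau')$, which is a smaller error. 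When $\alpha\geq 1$, connect $\tau$ and $\tau'$ by a smooth path $\gamma_t\in\Aut(\proj^k)$, $t\in[0,1]$, of length $\leq c\,dist(\tau,\tau')$ sitting in a slightly enlarged compact set, and apply the fundamental theorem of calculus to $t\mapsto(\gamma_t^{-1})^*R$; differentiating in $t$ costs one derivative of $R$, which is absorbed by $\cali{C}^\alpha$ since $\alpha\geq 1$, and yields the Lipschitz bound. The two cases together give the exponent $\min\set{\alpha,1}$.

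The main obstacle is the careful $\cali{C}^\alpha$ bookkeeping that ensures the constants arising from the chain-rule expansion and from the interpolation in $\tau$ depend only on $K$ and on the fixed atlas, not on the individual $\tau$. Once the chart reduction of the first paragraph is set up, this is a routine, if slightly tedious, piece of real analysis, and the rest of the argument is straightforward chain-rule computation.
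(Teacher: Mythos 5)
The paper does not prove this statement; it merely cites Lemma~2.1.8 of Dinh--Sibony's super-potentials paper, so there is no internal proof against which to compare. Your overall strategy for the first estimate --- reduce to finitely many charts by compactness, write $\tau_*R=(\tau^{-1})^*R$, exploit uniform $\cali{C}^m$-bounds on $\tau^{\pm1}$ for $\tau\in K$, and apply the chain rule --- is correct and gives $\norm{\tau_*R}_{\cali{C}^\alpha(W_0)}\leq c\norm{R}_{\cali{C}^\alpha(W)}$ exactly as you say.

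The second estimate, however, is not established as stated, and in fact cannot be. For $0\leq\alpha<1$ you only bound the sup-norm of the coefficients of $\tau_*R-\tau'_*R$; the $\cali{C}^\alpha$-norm also includes the $\alpha$-H\"older seminorm, which you never estimate, and which does not admit a bound $\lesssim\norm{R}_{\cali{C}^\alpha}\,dist(\tau,\tau')^\alpha$. Already in one real variable with $g(x)=\abs{x}^\alpha$ near the origin and $\tau$ a translation by $h>0$, taking test points $x=0$, $y=h$ gives
\begin{displaymath}
\frac{\bigl|\bigl(g(\cdot-h)-g\bigr)(0)-\bigl(g(\cdot-h)-g\bigr)(h)\bigr|}{\abs{0-h}^\alpha}=\frac{2h^\alpha}{h^\alpha}=2,
\end{displaymath}
so the $\alpha$-H\"older seminorm of $g(\cdot-h)-g$ stays bounded away from zero as $h\to 0$ rather than being $O(h^\alpha)$. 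For $\alpha\geq 1$ your path-integral argument necessarily loses a derivative: if $\tau_*R-\tau'_*R=\int_0^1\tfrac{d}{dt}(\gamma_t^{-1})^*R\,dt$, the integrand involves first partials of the coefficients of $R$, so controlling its $\cali{C}^\beta$-norm needs $\norm{R}_{\cali{C}^{\beta+1}}$; you therefore obtain
\begin{displaymath}
\norm{\tau_*R-\tau'_*R}_{\cali{C}^{\alpha-1}(W_0)}\lesssim\norm{R}_{\cali{C}^\alpha(W)}\,dist(\tau,\tau'),
\end{displaymath}
not the claimed $\cali{C}^\alpha$-bound, and the phrase ``absorbed by $\cali{C}^\alpha$ since $\alpha\geq 1$'' glosses over precisely this loss. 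In both regimes what your argument genuinely proves is a $\cali{C}^{\max(\alpha-1,0)}$-estimate on the left-hand side with the stated exponent on $dist(\tau,\tau')$; the cusp example shows that this loss is unavoidable with only $\norm{R}_{\cali{C}^\alpha}$ on the right, so the statement as reproduced in this paper appears to be too strong. If you want to use the lemma downstream, you should verify that the applications in Sections~7--9 need only the weaker $\cali{C}^{\max(\alpha-1,0)}$-version, or else that a higher-order norm of $R$ is actually available there.
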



We briefly introduce the Green $(p,p)$-current $T^p$ on $\proj^k$ associated with $f$. It can be defined by $T^p:=\lim_{n\to\infty}d^{-pn}(f^n)^*(\omega^p)$.
The pull-back $f^*$ on $\cali{C}_p$ is well-defined and $\norm{f^*(S)}=d^p$ for $S\in\cali{C}_p$. For details, see \cite{pullbackbyholo} and \cite{superpotentials}. We say that $S\in\cali{C}_p$ is invariant under $f$ if $\frac{1}{d^p}f^*(S)=S$. The Green $(p,p)$-current $T^p$ is a positive closed $(p,p)$-current of mass $1$ with some special properties: it is invariant under $f$ and it is an extreme point in the convex subset $\set{S\in\cali{C}_p:\frac{1}{d^p}f^*(S)=S}$ of $\cali{C}_p$. It is most diffuse in $\set{S\in\cali{C}_p:\frac{1}{d^p}f^*(S)=S}$. The super-potential of $T^p$ (cf. Section \ref{sec:superpotentials}) is H\"{o}lder continuous. It is uniquely determined for our $f$. For details, see \cite{superpotentials}. 
\medskip

We close this section by recalling a class of functions which generalizes the class of q-psh functions. 
\begin{defn}[See ~\cite{DSHreference} and p.12 in ~\cite{superpotentials}]
An integrable function $\varphi$ on $\proj^k$ is said to be DSH if it is equal outside a pluri-polar set to a difference of two q-psh functions. 
\end{defn}
We identify two DSH functions if they coincide outside a pluripolar set. The set of DSH functions is a vector space over $\Real$. The space of DSH functions is endowed with the following norm:
\begin{displaymath}
\norm{\varphi}_\dsh:=\norm{\varphi}_{\cali{L}^1}+\inf\set{\norm{T^+}: dd^c\varphi=T^+-T^-, T^\pm \textrm{ are positive and closed. }}.
\end{displaymath}
The currents $T^+$ and $T^-$ are cohomologous and have the same mass. 

\section{Super-potentials}\label{sec:superpotentials}
In this section, we briefly introduce the main tool, super-potentials, developed by Dinh-Sibony. For details, see \cite{superpotentials}.

\begin{defn}[See Section 3 in \cite{superpotentials}]
Let $S$ be a smooth form in $\cali{C}_p$ and $m$ a fixed real number. Then the super-potential $\cali{U}_S$ of $S$ of mean $m$ is a function on $\cali{C}_{k-p+1}$ defined by
\begin{displaymath}
\cali{U}_S(R):=\agl{S, U_R} \textrm{ for }R\in\cali{C}_{k-p+1},
\end{displaymath}
where $U_R$ is a quasi-potential of mean $m$ of $R$.

In general, for $S\in\cali{C}_p$, $\cali{U}_S(R)$ is defined by
\begin{displaymath}
\cali{U}_S(R):=\lim_{\theta\to 0}\cali{U}_{S_\theta}(R)
\end{displaymath}
where the subscript $\theta$ means the $\theta$-regularization of the current.
\end{defn}

\begin{prop}[See Section 3 in \cite{superpotentials}] Assume that $R$ is smooth. Then, $\cali{U}_S(R)=\agl{U_S, R}$, where the super-potential $\cali{U}_S(\cdot)$ of $S\in\cali{C}_p$ is of mean $m$ and $U_S$ is a quasi-potential of $S$ of mean $m$. In particular, for $S\in\cali{C}_p$ and $R\in\cali{C}_{k-p+1}$, we have $\cali{U}_S(R)=\lim_{\theta\to 0}\cali{U}_{S}(R_\theta)=\lim_{\theta\to 0}\cali{U}_{S_\theta}(R)$, where the subscript $\theta$ means the $\theta$-regularization of the current and all the super-potentials are assumed to be of the same mean $m$.
\end{prop}

Observe that the definition of the super-potential does not depend on the choice of a quasi-potential $U_R$ nor $U_S$ as long as their means equal $m$. By choosing a canonical-type quasi-potential, we can obtain good estimates for super-potentials. Such a quasi-potential is called the Green quasi-potential and defined using Proposition \ref{prop:kernel} below.

\begin{prop}[See Proposition 2.3.2 in ~\cite{superpotentials}]\label{prop:kernel}
Consider $X:=\proj^k\times\proj^k$ and $D$ the diagonal of $X$. Let $\Omega(z, \xi):=\sum_{j=0}^k\omega(z)^j\wedge\omega(\xi)^{k-j}$, where $(z, \xi)$ denotes the homogeneous coordinates of $\proj^k\times\proj^k$ with $z=[z_0: ...: z_k]$ and $\xi=[\xi_0: ...: \xi_k]$. Then, there is a negative $(k-1, k-1)$-form $K$ on $X$ smooth outside $D$ such that $dd^cK=[D]-\Omega$ which satisfies the following inequalities near $D$:
\begin{displaymath}
\norm{K(\cdot)}_\infty\lesssim -\dist(\cdot, D)^{2-2k}\log \dist(\cdot, D)\,\,\,\,\,\textrm{ and }\,\,\,\,\,\norm{\nabla K(\cdot)}_\infty\lesssim \dist(\cdot, D)^{1-2k}.
\end{displaymath}
Moreover, there is a negative DSH function $\eta$ and a positive closed $(k-1, k-1)$-form $\Theta$ smooth outside $D$ such that $K\geq \eta\Theta$, $\norm{\Theta(\cdot)}_\infty\lesssim \dist(\cdot, D)^{2-2k}$ and $\eta-\log\dist(\cdot, D)$ is bounded near $D$.
\end{prop}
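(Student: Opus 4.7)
The plan is to combine a cohomological existence argument with an explicit local model near the diagonal. First I would verify that $[D]$ and $\Omega$ represent the same class in $H^{k,k}(X,\Real)$: the K\"unneth decomposition of this group together with the classical formula for the Poincar\'e dual of the diagonal in $\proj^k\times\proj^k$ identifies the class of $[D]$ with $\sum_{j=0}^k[\omega(z)^j\wedge\omega(\xi)^{k-j}]$, which is exactly the class of $\Omega$. The $dd^c$-lemma on the compact K\"ahler manifold $X$ therefore produces some real $(k-1,k-1)$-current solving $dd^c K=[D]-\Omega$; the substance of the proposition is to select such a $K$ that is smooth off $D$, negative, and has the sharp pointwise and gradient bounds stated near $D$.

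For the explicit selection, I would work in a tubular neighborhood $\mathcal{N}$ of $D$ in $X$ and introduce normal coordinates $w=z-\xi$, so that $dist(\cdot,D)$ is comparable to $\norm{w}$. On $\mathcal{N}$ the Bochner--Martinelli--Koppelman construction on $\cplx^k$ yields a local $(k-1,k-1)$-form $K_{\mathrm{loc}}$ solving $dd^c K_{\mathrm{loc}}=[D]$ with explicit singular behavior of size $\norm{w}^{2-2p}\log\norm{w}$ and gradient of size $\norm{w}^{1-2p}$---this is the standard one-derivative loss of a fundamental solution. Multiplying by a smooth cutoff $\chi$ equal to $1$ near $D$ and supported in $\mathcal{N}$ produces a globally defined form $\chi K_{\mathrm{loc}}$ on $X$ whose $dd^c$ equals $[D]$ plus a smooth closed error cohomologous to $-\Omega$; the $dd^c$-lemma then rewrites that error as $dd^c$ of a smooth global $(k-1,k-1)$-form.

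To enforce negativity, I would invoke Lemma~\ref{lem:233} to extract a negative DSH function $\eta$ smooth outside $D$ with $\eta-\log dist(\cdot,D)$ bounded, and then use $\eta$ multiplied by a strictly positive smooth $(k-1,k-1)$-form (such as a multiple of $\omega(z)^{k-1}$) to dominate the sign-indefinite portion of the local kernel. Combining the three ingredients---the cutoff local kernel, the smooth global correction from the $dd^c$-lemma, and the negative logarithmic dominator built from $\eta$---one assembles $K$ with $dd^c K=[D]-\Omega$ and both $\norm{K(\cdot)}_\infty\lesssim -dist(\cdot,D)^{2-2p}\log dist(\cdot,D)$ and $\norm{\nabla K(\cdot)}_\infty\lesssim dist(\cdot,D)^{1-2p}$ near $D$.

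The main obstacle is reconciling three competing demands on a single form: matching the source $[D]-\Omega$ exactly, maintaining global negativity, and achieving sharp singularity bounds with the gradient strictly one derivative worse than the function. The cutoff that localizes $K_{\mathrm{loc}}$ unavoidably introduces sign-indefinite derivative terms, so the logarithmic dominator from Lemma~\ref{lem:233} must be calibrated both to absorb those cross terms and to enforce negativity without degrading the singularity order. Verifying that the assembled $K$ really achieves the stated orders of singularity, uniformly in a neighborhood of $D$, is where the bulk of the technical estimation will sit.
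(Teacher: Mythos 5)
The paper itself does not prove Proposition~\ref{prop:kernel}; it is quoted directly from Dinh--Sibony (with an apparent transcription slip: the exponents should read $2-2k$ and $1-2k$, as the paper later confirms when invoking ``singularities of order $dist(z,\xi)^{1-2k}$''). So I am evaluating your attempt against what the construction must actually do. Your skeleton is right---the K\"unneth/diagonal-class identity, the $dd^c$-lemma, a local explicit kernel, and Lemma~\ref{lem:233}---but there is a genuine gap in how you intend to enforce negativity.

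You propose to multiply $\eta$ by a \emph{smooth} strictly positive closed $(k-1,k-1)$-form $\Theta$ (e.g.\ a multiple of $\omega(z)^{k-1}$) and fold the result into $K$. This cannot work. As an additive correction, adding $c\,\eta\Theta$ changes $dd^cK$ by $c\,dd^c(\eta\Theta)\ne 0$, so the equation $dd^cK=[D]-\Omega$ is no longer satisfied. As the main kernel, it fails for codimension reasons: $D$ has codimension $k\ge 2$, so the $(1,1)$-current $dd^c\eta$, which near $D$ behaves like $dd^c\log\norm{w}$ on $\cplx^k$, is $L^1_{\mathrm{loc}}$ and carries no mass on $D$; wedging with a smooth closed $\Theta$ then gives $dd^c(\eta\Theta)=(dd^c\eta)\wedge\Theta$, an $L^1_{\mathrm{loc}}$ $(k,k)$-current with no Dirac contribution on $D$, which cannot equal $[D]-\Omega$. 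It also produces the wrong order of singularity: with $\Theta$ smooth, $\eta\Theta$ is only $O(\log dist)$, not $O(dist^{2-2k}\log dist)$. The missing idea is that $\Theta$ must itself be \emph{singular} along $D$ of order $dist^{2-2k}$---the paper says this explicitly just after the proposition. Essentially $\Theta$ is built from $(dd^c\eta)^{k-1}$: the identity $dd^c\bigl(\eta\,(dd^c\eta)^{k-1}\bigr)=(dd^c\eta)^k$ (valid since $(dd^c\eta)^{k-1}$ is $d$- and $d^c$-closed) is what produces the Monge--Amp\`ere mass on $D$, and the factors $\eta\sim\log dist$ and $(dd^c\eta)^{k-1}\sim dist^{2-2k}$ account for exactly the stated singularity orders. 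Relatedly, Bochner--Martinelli--Koppelman is not the right tool here: it solves $\dbar$-equations, whereas what is needed is a $dd^c$-Green form of the $(dd^c u)^{k-1}$ type just described.
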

In the above, $\norm{\nabla K(\cdot)}_\infty$ is the sum $\sum_j\abs{\nabla K_j}$, where the $K_j$'s are the coefficients of $K$ for a fixed atlas of $X$. On $\proj^k\times\proj^k$, the distance, which is also denoted by $\dist$, is measured with respect to the product metric of the standard Fubini-Study metric on each $\proj^k$.

\begin{lem}[See Lemma 2.3.3 in ~\cite{superpotentials}]\label{lem:233}
There is a negative DSH function $\eta$ on $X$ smooth outside $D$ such that $\eta-\log \dist(\cdot, D)$ is bounded.
\end{lem}

\begin{defn}
The Green quasi-potential $U$ of $R\in\cali{C}_p$ is defined by
\begin{displaymath}
U(z):=\int_{\xi\neq z}R(\xi)\wedge K(z, \xi).
\end{displaymath}
\end{defn}

\begin{rem}\label{rem:Greenqpotential}
Note that $U$ depends on the choice of $K$.
 The mean $m$ of $U$ (or equivalently, the mass of $U$ since $U$ is negative) is bounded uniformly with respect to $R$. Note also that $U-m\omega^{p-1}$ is a quasi-potential of mean $0$ of $R$.
\end{rem}

\begin{thm}[See Theorem 2.3.1]\label{thm:Greenqpotential}
Let $R$ be a current in $\cali{C}_p$. Then, its Green quasi-potential $U$ is negative, depends linearly on $R$ and satisfies that for every $r$ and $s$ with $1\leq r<\frac{k}{k-1}$ and $1\leq s<\frac{2k}{2k-1}$, one has
\begin{displaymath}
\norm{U}_{\cali{L}^r}\leq c_r\,\,\, \textrm{ and }\,\,\,\norm{dU}_{\cali{L}^s}\leq c_s
\end{displaymath}
for some positive constants $c_r$ and $c_s$ independent of $R$. Moreover, $U$ depends continuously on $R$ with respect to $\cali{L}^r$-topology on $U$ and the weak topology on $R$.
\end{thm}

The following shows that super-potentials determine currents.
\begin{prop}[See Proposition 3.1.9 in ~\cite{superpotentials}]\label{prop:319}
Let $I$ be a compact subset in $\proj^k$ with $(2k-2p)$-dimensional Hausdorff measure 0. Let $S$ and $S'$ be currents in $\cali{C}_p$, with super-potentials $\cali{U}_S$ and $\cali{U}_{S'}$. If $\cali{U}_S=\cali{U}_{S'}$ on smooth forms in $\cali{C}_{k-p+1}$ with compact support in $\proj^k\setminus I$, then $S=S'$.
\end{prop}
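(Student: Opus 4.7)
The plan is to use the hypothesis to deduce that $S-S'$ vanishes on the open set $\proj^k\setminus I$, and then invoke the Hausdorff dimension hypothesis on $I$ to rule out any residual mass of $S-S'$ concentrated on $I$.

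\textbf{Step 1: vanishing off $I$.} Both $S$ and $S'$ lie in $\cali{C}_p$ and so are cohomologous; hence $T:=S-S'$ is a closed real $(p,p)$-current with zero cohomology class, and the $dd^c$-lemma on the compact K\"ahler manifold $\proj^k$ writes $T=dd^cW$ for some real $(p-1,p-1)$-current $W$. For a smooth $R\in\cali{C}_{k-p+1}$ with compact support in $\proj^k\setminus I$ and its Green quasi-potential $U_R$ from Theorem \ref{thm:Greenqpotential}, integration by parts gives
\begin{displaymath}
\cali{U}_S(R)-\cali{U}_{S'}(R)=\agl{T,U_R}=\agl{W,R-\omega^{k-p+1}}=\agl{W,R}-c,
\end{displaymath}
where $c:=\agl{W,\omega^{k-p+1}}$ is a constant. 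The hypothesis makes the left-hand side vanish, so $\agl{W,R}=c$ on every such $R$. I would then vary $R$ along affine pencils $R_\pm:=\Omega_0\pm\epsilon\,dd^cV$, where $\Omega_0$ is a fixed smooth reference element of $\cali{C}_{k-p+1}$ supported in $\proj^k\setminus I$, $V$ is an arbitrary smooth $(k-p,k-p)$-form with compact support in $\proj^k\setminus I$, and $\epsilon>0$ is small enough to preserve positivity. Both $R_\pm$ then belong to $\cali{C}_{k-p+1}$ with support in $\proj^k\setminus I$, so $\agl{W,R_+}=\agl{W,R_-}=c$ forces $\agl{W,dd^cV}=0$, i.e., $\agl{dd^cW,V}=0$. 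Since $V$ ranges over all smooth test forms compactly supported in $\proj^k\setminus I$, this yields $T=dd^cW=0$ as a current on $\proj^k\setminus I$, so $\mathrm{supp}(T)\subseteq I$.

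\textbf{Step 2: crossing $I$.} Positive closed $(p,p)$-currents do not charge sets of $(2k-2p)$-dimensional Hausdorff measure zero (trace-measure dimension bound). Hence the trace measures of $S$ and $S'$ assign zero mass to $I$; combining this with $\mathrm{supp}(T)\subseteq I$ from Step 1 forces $T=0$, that is, $S=S'$.

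\textbf{Main obstacle.} The delicate point is the construction of a smooth reference $\Omega_0\in\cali{C}_{k-p+1}$ whose support lies in $\proj^k\setminus I$ and which is strictly positive (relative to $\omega^{k-p+1}$) on the supports of the perturbation forms $V$, so that the pencils $R_\pm=\Omega_0\pm\epsilon\,dd^cV$ genuinely stay in $\cali{C}_{k-p+1}$ with support avoiding $I$. Producing such an $\Omega_0$ requires a gluing/smoothing construction for Fubini--Study-type forms on the open manifold $\proj^k\setminus I$, and it is the compactness of $I$ together with its vanishing $(2k-2p)$-Hausdorff measure that makes the construction feasible. A secondary technical issue is rigorously justifying the integration by parts $\agl{dd^cW,U_R}=\agl{W,dd^cU_R}$ in view of the singularities of $W$ and $U_R$; the $\cali{L}^r$ and $\cali{L}^s$ regularity of the Green quasi-potential provided by Theorem \ref{thm:Greenqpotential} is what allows one to approximate $W$ and $U_R$ by smooth data and push the formal identity through.
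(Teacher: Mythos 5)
Since the paper does not actually prove this proposition---it is quoted directly from Dinh--Sibony's super-potentials paper \cite{superpotentials}---there is no in-house proof to compare against. Taking your write-up on its own, the two-step outline (pencil argument to kill $S-S'$ off $I$, then a trace-measure dimension bound to cross $I$) is the right skeleton. Step 2 is correct as written: for a positive closed $(p,p)$-current $S$ one has $\|S\|(B(a,r))\lesssim r^{2(k-p)}$ for $r$ small (Lelong monotonicity plus finite total mass), so the trace measures of $S$ and $S'$ annihilate any set of vanishing $H_{2k-2p}$-measure, and since the coefficient measures of a positive current are dominated by the trace measure, $\mathrm{supp}(S-S')\subseteq I$ then forces $S=S'$. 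The ``secondary technical issue'' you raise (integrating by parts $\langle dd^cW,U_R\rangle=\langle W,dd^cU_R\rangle$) is actually not an issue: the hypothesis concerns smooth $R$, for which one can solve $dd^cU_R=R-\omega^{k-p+1}$ with $U_R$ itself smooth, and the identity is then immediate from the definition of $dd^c$ on currents.

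The genuine gap is exactly the one you flag as the ``main obstacle'', and your stated reason for its feasibility is not a construction. You need, for every compact $K\subset\proj^k\setminus I$, a smooth $\Omega_0\in\cali{C}_{k-p+1}$ with $\mathrm{supp}(\Omega_0)\subset\proj^k\setminus I$ and $\Omega_0$ strictly positive at every point of $K$. Neither of the obvious candidates works out of the box: the cutoff $\chi\,\omega^{k-p+1}$ (with $\chi\equiv 0$ near $I$) is not closed; and a single regularized plane $[P]_\theta$, $P$ a $(p-1)$-plane disjoint from $I$, is supported only near $P$, which has positive codimension and hence cannot dominate $dd^cV$ for $V$ supported far from $P$. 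The correct route is (i) to show, by the incidence-variety dimension count in the Grassmannian of $(p-1)$-planes, that the set of $(p-1)$-planes meeting $I$ is nowhere dense---this is precisely where $H_{2k-2p}(I)=0$ enters, since it gives $\dim_H I\leq 2k-2p<2(k-p+1)$, and $2(k-p+1)$ is the real codimension of the condition ``contains a given point''; and then (ii) to average the regularizations $(\tau)_*[P]_\theta$ over a nonempty open set of automorphisms $\tau$ for which $\tau(P)$ keeps a uniform distance from $I$, with a smooth weight, producing a smooth closed positive form of mass $1$ whose support avoids $I$ and which is strictly positive on $K$ once $\theta$ and the set of $\tau$ are chosen appropriately (the smoothing argument is the same as the $\theta$-regularization lemma, Proposition \ref{prop:216}). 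Without this lemma, Step 1 does not go through, so as written the proof has a hole at its load-bearing point even though the overall strategy is correct.
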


The following is about the compactness property of super-potentials.
\begin{prop}[See Proposition 3.2.6 in ~\cite{superpotentials}]\label{prop:326}
Let $\cali{U}_{S_n}$ be a super-potential of a current $S_n$ in $\cali{C}_p$. Assume that $\set{\cali{U}_{s_n}}_{n\geq 0}$ is bounded from above and does not converge uniformly to $-\infty$. Then there is an increasing sequence $\set{n_j}_{j\geq 0}$ of integers such that $S_{n_j}$ converge to a current $S$ and $\cali{U}_{S_{n_j}}$ converge on smooth forms in $\cali{C}_{k-p+1}$ to a super-potential $\cali{U}_S$ of $S$. Moreover,
\begin{displaymath}
\limsup_{j\to\infty}\cali{U}_{s_{n_j}}\leq\cali{U}_S.
\end{displaymath}
\end{prop}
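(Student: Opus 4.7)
The plan is to combine weak-star compactness of $\cali{C}_p$ with a Hartogs-type semi-continuity argument for super-potentials. Since $\cali{C}_p$ is a compact metric space under any distance $\mathrm{dist}_\alpha$ with $\alpha>0$, I would first extract from $\{S_n\}$ a subsequence $\{S_{n_j}\}$ converging weakly to some $S\in\cali{C}_p$. It then remains to show that $\cali{U}_{S_{n_j}}$ converges to a super-potential $\cali{U}_S$ of $S$ on smooth test currents in $\cali{C}_{k-p+1}$, and to establish the $\limsup$ estimate.

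For the pointwise convergence on smooth test currents, fix a smooth $R\in\cali{C}_{k-p+1}$. Since $R-\omega^{k-p+1}$ is a smooth $dd^c$-exact form, one can solve $dd^c U_R=R-\omega^{k-p+1}$ with $U_R$ smooth and of mean $m$. Unfolding the definition of super-potentials and using the smoothness of $U_R$, one has for any $S'\in\cali{C}_p$,
\begin{displaymath}
\cali{U}_{S'}(R)=\lim_{\theta\to 0}\cali{U}_{S'_\theta}(R)=\lim_{\theta\to 0}\agl{S'_\theta,U_R}=\agl{S',U_R},
\end{displaymath}
where the last equality uses weak convergence $S'_\theta\to S'$ tested against the smooth form $U_R$. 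Applied to $S'=S_{n_j}$ and $S'=S$, the weak convergence $S_{n_j}\to S$ against the smooth form $U_R$ gives $\cali{U}_{S_{n_j}}(R)=\agl{S_{n_j},U_R}\longrightarrow\agl{S,U_R}=\cali{U}_S(R)$.

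For the $\limsup$ inequality on a general (possibly singular) $R\in\cali{C}_{k-p+1}$, I would approximate $R$ by its regularizations $R_\theta$ and exploit the symmetric identity $\cali{U}_{S'_\theta}(R)=\cali{U}_{S'}(R_\theta)$. This identity follows from the kernel/Fubini representation of super-potentials together with the radial symmetry of $\mu$ and its invariance under $\tau\mapsto\tau^{-1}$; it transfers the regularization from the $S$-side to the $R$-side. Combined with the previous paragraph applied to each smooth $R_\theta$, one obtains $\cali{U}_{S_{n_j}}(R_\theta)\to\cali{U}_S(R_\theta)$ as $j\to\infty$; a standard monotone-limit and upper semi-continuity argument in $\theta$ then yields $\limsup_j\cali{U}_{S_{n_j}}(R)\leq\cali{U}_S(R)$.

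The main obstacle is precisely this interchange of the two limits $j\to\infty$ and $\theta\to 0$, and this is where both hypotheses become essential. The uniform upper bound on $\{\cali{U}_{S_n}\}$ plays the role of ``locally bounded above'' for a family of quasi-plurisubharmonic functions, providing the compactness needed for the Hartogs-type semi-continuity and the uniform $\mathcal{L}^1$-type control on the associated Green quasi-potentials that Theorem~\ref{thm:Greenqpotential} supplies. The assumption that the family does not converge uniformly to $-\infty$ prevents the candidate $\cali{U}_S$ from collapsing to the trivial $-\infty$ function, and thereby ensures that the limit is a genuine (finite, non-trivial) super-potential of the limit current $S$ rather than a degenerate object. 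Without these two hypotheses, both the limsup estimate and the identification of the limit as a super-potential of $S$ can fail.
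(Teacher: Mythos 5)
This proposition is cited verbatim from \cite{superpotentials}, and the paper at hand does not include a proof of it, so there is no in-paper argument to compare against; your sketch has to stand on its own merits. The overall architecture you propose --- weak compactness of $\cali{C}_p$, the identity $\cali{U}_{S'}(R)=\agl{S',U_R}$ for smooth $R$, and transferring the regularization from the $S$-side to the $R$-side via the symmetric identity --- is the correct Dinh--Sibony machinery. However, there is a genuine gap located precisely where the two hypotheses are supposed to do work, and it undermines the conclusion that the limit is a super-potential of $S$.

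A super-potential of $S_n$ is only defined up to an additive constant determined by the chosen mean $m_n$: since each $S_n$ has mass $1$, one has $\cali{U}_{S_n}^{(m_n)}=\cali{U}_{S_n}^{(0)}+m_n$, and indeed $\cali{U}_{S_n}(\omega^{k-p+1})=m_n$. Your argument fixes a single mean $m$ at the outset, solves $dd^cU_R=R-\omega^{k-p+1}$ with $U_R$ of that mean, and then treats every $\cali{U}_{S_n}$ and $\cali{U}_S$ as a super-potential of the same mean. Under this implicit normalization both hypotheses become vacuous: mean-$0$ (or any fixed-mean) super-potentials are uniformly bounded above and cannot converge uniformly to $-\infty$ because $\cali{U}_{S_n}(\omega^{k-p+1})$ is constant in $n$. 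The genuine content of the statement is that the $m_n$ may vary, so that $\cali{U}_{S_{n_j}}(R)=\agl{S_{n_j},U_R^{(0)}}+m_{n_j}$ and the additive term must be controlled. The hypothesis ``bounded from above,'' tested against $\omega^{k-p+1}$, bounds $m_n$ above; ``does not converge uniformly to $-\infty$'' gives a subsequence on which $m_n$ is also bounded below; passing to a further subsequence with $m_{n_j}\to m$ is exactly what makes $\agl{S_{n_j},U_R^{(0)}}+m_{n_j}\to\agl{S,U_R^{(0)}}+m$ a genuine super-potential of $S$. That extraction is absent from your write-up, and your attribution of the hypotheses' role to ``Hartogs-type semi-continuity'' and ``preventing collapse to $-\infty$'' gestures in the right direction but does not locate or close the gap. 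Separately, the $\limsup$ estimate for singular $R$ is dispatched as ``a standard monotone-limit and upper semi-continuity argument in $\theta$''; what is actually needed is the comparison $\cali{U}_{S'}(R)\leq\cali{U}_{S'_\theta}(R)=\cali{U}_{S'}(R_\theta)$ coming from the monotone definition of $\cali{U}_{S'}(R)$ as $\lim_{\theta\to0}\cali{U}_{S'_\theta}(R)$. That is a true and short argument, but it should be stated rather than left as a black box, since the double limit $j\to\infty$, $\theta\to0$ does not commute without it.
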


The following theorem is about the regularity of the Green super-potentials of order $p$, that is, the super-potentials of the Green $(p,p)$-current.
\begin{thm}[See Theorem 5.4.1 in ~\cite{superpotentials}]\label{thm:541}
Let $f:\proj^k\to\proj^k$ be a holomorphic map of algebraic degree $\geq 2$. Then, the Green super-potentials of order $p$ of $f$ are H\"{o}lder continuous on $\cali{C}_{k-p+1}$ with respect to the distance $\dist_1$.
\end{thm}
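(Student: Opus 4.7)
The plan is to exploit the dynamical invariance $f^*T^p = d^pT^p$ together with exponential equidistribution for smooth forms (Theorem 5.3.9 of \cite{superpotentials}) via a functional equation for $\cali{U} := \cali{U}_{T^p}$. Introduce the operator $\Lambda := d^{-(k-p+1)}f^*$ on $\cali{C}_{k-p+1}$, for which $T^{k-p+1}$ is the unique attractive fixed point. For smooth $R$, compare the Green quasi-potentials $U_{\Lambda R}$ and $d^{-(k-p+1)}f^*U_R$: they differ by a fixed $(k-p, k-p)$-current $V$ correcting the cohomology class $\omega^{k-p+1} - d^{-(k-p+1)}f^*\omega^{k-p+1}$, plus a mean-normalizing scalar $\beta(R)$ that depends affinely on $R$. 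Pairing with $T^p$ and using $f_*T^p = d^{k-p}T^p$ yields an identity
\[
\cali{U}(\Lambda R) = d^{-1}\cali{U}(R) + c(R),
\]
where $c(R)$ pairs $R$ against a fixed smooth test current and is Lipschitz in $dist_\alpha$ for any $\alpha > 0$.

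Rearrange and iterate: for smooth $R, R' \in \cali{C}_{k-p+1}$,
\[
\cali{U}(R) - \cali{U}(R') = d^n\bigl[\cali{U}(\Lambda^n R) - \cali{U}(\Lambda^n R')\bigr] - \sum_{j=0}^{n-1}d^{j+1}\bigl[c(\Lambda^j R) - c(\Lambda^j R')\bigr].
\]
I would control the contraction of $\Lambda$ via duality $\langle\Lambda R, \Phi\rangle = d^{-(k-p+1)}\langle R, f_*\Phi\rangle$ combined with the standard bound $\|f_*\Phi\|_{\cali{C}^\alpha} \leq C_\alpha\|\Phi\|_{\cali{C}^\alpha}$ for $\alpha \in (0,1)$, obtaining $dist_\alpha(\Lambda R, \Lambda R') \leq \mu\,dist_\alpha(R, R')$ with $\mu := C_\alpha d^{-(k-p+1)}$. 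Choosing $\alpha$ small enough that $d\mu < 1$, the geometric series $\sum d^{j+1}\mu^j\,Lip(c)\,dist_\alpha(R,R')$ converges to a constant multiple of $dist_\alpha(R, R')$ uniformly in $n$, while the remainder $d^n|\cali{U}(\Lambda^n R) - \cali{U}(\Lambda^n R')|$ is driven to zero using exponential equidistribution together with an a priori bound on $\cali{U}$ (from Theorem \ref{thm:Greenqpotential} and the mass bound on $T^p$). This produces a H\"older estimate for $\cali{U}$ in $dist_\alpha$ on smooth currents, and the interpolation $dist_\alpha(R, R') \leq C\,dist_1(R, R')^\alpha$ (proved by mollifying test forms and optimizing the mollification scale) transfers it to $dist_1$.

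For arbitrary $R, R' \in \cali{C}_{k-p+1}$, regularize $R \mapsto R_\theta$ via Proposition \ref{prop:216}, bound $|\cali{U}(R) - \cali{U}(R_\theta)|$ using the defining limit $\cali{U}(R) = \lim_{\theta\to 0}\cali{U}(R_\theta)$, and optimize $\theta$ as a power of $dist_1(R, R')$. The main obstacle is the quantitative bound on $\|f_*\Phi\|_{\cali{C}^\alpha}$: because $f_*\Phi$ may carry singularities along the critical set of $f$, the constant $C_\alpha$ tends to blow up as $\alpha \to 1$, so the threshold on $\alpha$ ensuring $d\mu < 1$ --- and hence the final H\"older exponent in $dist_1$ --- depends delicately on $d$, $k$, $p$, and the geometry of the critical locus, and requires Lojasiewicz-type inequalities in the spirit of Section \ref{sec:lineq} to pin down.
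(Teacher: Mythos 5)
The functional equation $\cali{U}(\Lambda R)=d^{-1}\cali{U}(R)+c(R)$ that you derive from $f^*T^p=d^pT^p$ (equivalently $f_*T^p=d^{k-p}T^p$) is correct, and iterating an invariance relation of this kind is a natural route; this part of the plan is sound. The gap is in the contraction estimate for $\Lambda$. You invoke a ``standard bound'' $\norm{f_*\Phi}_{\cali{C}^\alpha}\leq C_\alpha\norm{\Phi}_{\cali{C}^\alpha}$ for $\alpha\in(0,1)$ and treat the only danger as $C_\alpha\to\infty$ when $\alpha\to1$. But for $1<p<k$ the test forms $\Phi$ appearing in $dist_\alpha$ on $\cali{C}_{k-p+1}$ have bidegree $(p-1,p-1)$ with $p-1\geq 1$, and for forms of positive degree the push-forward $f_*\Phi$ is generically \emph{not bounded} near the critical values of $f$, so $C_\alpha=\infty$ for every $\alpha\geq 0$, not just for $\alpha$ near $1$. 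Concretely, in local coordinates where $f(z_1,z')=(z_1^2,z')$ near a generic branch point, one computes $f_*(dz_1\wedge d\bar z_1)=\tfrac12\abs{w_1}^{-1}dw_1\wedge d\bar w_1$: the two preimage branches do not cancel the Jacobian singularity. (The cancellation does happen for functions, $q=0$, which is what makes the $p=1$ case work, and in top degree after integration; it fails for $0<q<k$.) Consequently the duality bound $dist_\alpha(\Lambda R,\Lambda R')\leq\mu\,dist_\alpha(R,R')$ does not follow, the factor $d^{j+1}$ in your series is not absorbed, and the argument does not close.

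The Lojasiewicz estimates you point to at the end control how fast preimage fibers of nearby points can merge; this is exactly what yields H\"{o}lder regularity of $f_*\phi$ for a \emph{function} $\phi$ and hence of the $(1,1)$ Green quasi-potential, but it cannot convert an $L^1$-but-not-$L^\infty$ density into a $\cali{C}^\alpha$ form. This is the crux of why the intermediate bidegree case is genuinely harder: one must avoid ever applying $f_*$ to positive-degree test forms. Dinh--Sibony's proof of Theorem 5.4.1 in \cite{superpotentials} does precisely that --- it works at the level of the Green quasi-potentials and the kernel $K$ of Proposition \ref{prop:kernel}, uses the current inequality $f_*(\omega^p)\leq(f_*\omega)^p$ to collapse wedge products down to the $(1,1)$ level where Lojasiewicz-type bounds on $f_*$ do apply, and trades the missing $\cali{C}^\alpha$-stability of $f_*$ for the explicit $\cali{C}^\alpha$-rates of the $\theta$-regularization from Proposition \ref{prop:216} when comparing $\cali{U}_{T^p}$ with the super-potentials of the smooth approximants $d^{-pn}(f^n)^*\omega^p$. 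Your scheme is architecturally similar but rests on a push-forward bound that is false in the range $1<p<k$; to repair it you would have to replace the $\cali{C}^\alpha$-estimate on $f_*\Phi$ by an estimate on the pairing $\agl{R-R',f_*\Phi}$ that exploits the positivity and closedness of $R,R'$ and the substitute $(f_*\omega)^p$ for $f_*(\omega^p)$, at which point you are essentially reconstructing the Dinh--Sibony argument.
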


\section{The Proof of the Main Theorem}\label{sec:settings}
In this section, we restate Theorem \ref{thm:mainthm} in terms of super-potentials and prove the new statement (see Proposition \ref{prop:mainthm2}). We follow the notation used in Section 5 of ~\cite{superpotentials} for convenience.
\medskip

Let $S\in\cali{C}_p$ and $T^p$ the Green $(p,p)$-current associated with $f$ over $\proj^k$.
\begin{defn}[See Section 5 in \cite{superpotentials}]
The \emph{dynamical super-potential} of $S$, denoted by $\cali{V}_S$, is defined by
\begin{displaymath}
\cali{V}_S:=\cali{U}_S-\cali{U}_{T^p}-c_S,
\end{displaymath}
where $\cali{U}_S$, $\cali{U}_{T^p}$ are the super-potentials of mean $0$ of $S$, $T^p$, respectively, and $c_S:=\cali{U}_S(T^{k-p+1})-\cali{U}_{T^p}(T^{k-p+1})$.
\end{defn}
\begin{defn}[See Section 5 in \cite{superpotentials}]
The \emph{dynamical Green quasi-potential} of $S$ is defined by 
\begin{displaymath}
V_S:=U_S-U_{T^p}-(m_S-m_{T^p}+c_S)\omega^{p-1},
\end{displaymath}
where $U_S$, $U_{T^p}$ are the Green quasi-potentials of $S$, $T^p$, respectively and $m_S$, $m_{T^p}$ are their corresponding means.
\end{defn}

For notational convenience, we use the following notation in the rest of the paper:
\begin{displaymath}
L:=\frac{1}{d^p}f^*\,\,\,\,\, \textrm{and}\,\,\,\,\, \Lambda:=\frac{1}{d^{p-1}}f_*.
\end{displaymath}
For the definitions of these operators on $\cali{C}_p$, see \cite{superpotentials} and also see \cite{pullbackbyholo}.

The dynamical super-potentials have the following properties.

\begin{lem}[See Lemma 5.4.6 in ~\cite{superpotentials}]\label{lem:546}
We have $\cali{V}_S(T^{k-p+1})=0$, $\cali{V}_S(R)=\agl{V_S, R}$ for smooth $R\in\cali{C}_{k-p+1}$, and $\cali{V}_{L(S)}=d^{-1}\cali{V}_S\circ\Lambda$ on $\cali{C}_{k-p+1}$. Moreover, $\cali{U}_S-\cali{V}_S$ is bounded by a constant independent of $S$.
\end{lem}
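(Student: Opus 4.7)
The plan is to verify each of the four assertions directly from the definitions of $\cali{V}_S$ and $V_S$, exploiting: (i) that $c_S$ is chosen precisely to normalize $\cali{V}_S$ at $T^{k-p+1}$; (ii) that on smooth test currents a super-potential of mean $0$ is computed by pairing with any mean-$0$ quasi-potential; (iii) the dynamical invariances $L(T^p)=T^p$ and $\Lambda(T^{k-p+1})=T^{k-p+1}$, the second of which follows from $f^*T^{k-p+1}=d^{k-p+1}T^{k-p+1}$ together with $f_*f^*=d^k$ on top currents; and (iv) the H\"{o}lder regularity of Green super-potentials provided by Theorem \ref{thm:541}.

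The first assertion is immediate: substituting $R=T^{k-p+1}$ into the definition of $\cali{V}_S$ gives $\cali{V}_S(T^{k-p+1})=\cali{U}_S(T^{k-p+1})-\cali{U}_{T^p}(T^{k-p+1})-c_S=0$ by the very choice of $c_S$. For the second assertion, observe that the Green quasi-potentials $U_S$ and $U_{T^p}$ have means $m_S$ and $m_{T^p}$, so $U_S-m_S\omega^{p-1}$ and $U_{T^p}-m_{T^p}\omega^{p-1}$ are mean-$0$ quasi-potentials. For smooth $R\in\cali{C}_{k-p+1}$ this gives $\cali{U}_S(R)=\agl{U_S-m_S\omega^{p-1},R}$ and similarly for $T^p$. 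Since $\agl{\omega^{p-1},R}=1$ from the mass normalization of $R$, subtracting these pairings and absorbing the scalar $c_S$ as $\agl{c_S\omega^{p-1},R}=c_S$ produces $\cali{V}_S(R)=\agl{V_S,R}$.

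The invariance identity $\cali{V}_{L(S)}=d^{-1}\cali{V}_S\circ\Lambda$ is the main technical point. For smooth $S$ and smooth $R$ I would construct a quasi-potential for $L(S)$ from the pullback $f^*U_S$: the equation $dd^c(f^*U_S)=f^*S-f^*\omega^p$ forces a correction by a fixed smooth form cohomologous to $f^*\omega^p-d^p\omega^p$, followed by the appropriate constant renormalization to obtain a mean-$0$ quasi-potential for $L(S)$. Pairing with $R$ and using the projection formula then yields $d^{-p}\agl{f^*U_S,R}=d^{-p}\agl{U_S,f_*R}=d^{-1}\agl{U_S,\Lambda R}$. Running the identical computation with $T^p$ in place of $S$ and invoking $L(T^p)=T^p$ and $\Lambda(T^{k-p+1})=T^{k-p+1}$, the correction terms and the normalizing constants $m_S$, $m_{T^p}$, $c_S$ align so as to produce $\cali{V}_{L(S)}(R)=d^{-1}\cali{V}_S(\Lambda R)$ on smooth test currents. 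Extension to arbitrary $R\in\cali{C}_{k-p+1}$ and non-smooth $S$ follows from the $\theta$-regularization construction of super-potentials, which is compatible with $L$ and $\Lambda$ since these are continuous operators. The bookkeeping of the normalizing constants is the step I expect to be hardest.

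Finally, $\cali{U}_S-\cali{V}_S=\cali{U}_{T^p}+c_S$ as functions on $\cali{C}_{k-p+1}$. The term $\cali{U}_{T^p}$ is H\"{o}lder continuous on the compact space $\cali{C}_{k-p+1}$ by Theorem \ref{thm:541}, hence uniformly bounded. For $c_S$, I would use the symmetry $\cali{U}_S(T^{k-p+1})=\cali{U}_{T^{k-p+1}}(S)$ valid for mean-$0$ super-potentials (which follows by integration by parts against mean-$0$ quasi-potentials), so that $c_S=\cali{U}_{T^{k-p+1}}(S)-\cali{U}_{T^p}(T^{k-p+1})$. Applying Theorem \ref{thm:541} in the dual codimension, $\cali{U}_{T^{k-p+1}}$ is H\"{o}lder continuous on the compact space $\cali{C}_p$, so $c_S$ is uniformly bounded in $S$. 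Combining the two bounds yields the claim.
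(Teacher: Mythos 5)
The paper does not prove this lemma; it is quoted verbatim from Dinh--Sibony (Lemma 5.4.6 in \cite{superpotentials}), so there is no in-paper argument to compare against. Your reconstruction is essentially the argument of Dinh--Sibony and is correct in outline: the normalization $c_S$ is designed precisely to force $\cali{V}_S(T^{k-p+1})=0$; the smooth-form identity follows from the mean-$0$ normalization and $\agl{\omega^{p-1},R}=1$; the invariance identity comes from the projection formula $\agl{f^*U_S,R}=\agl{U_S,f_*R}$ together with $L(T^p)=T^p$ and $\Lambda(T^{k-p+1})=T^{k-p+1}$; and the uniform bound on $\cali{U}_S-\cali{V}_S=\cali{U}_{T^p}+c_S$ follows from Theorem \ref{thm:541} and the symmetry $\cali{U}_S(T^{k-p+1})=\cali{U}_{T^{k-p+1}}(S)$ on the compact space $\cali{C}_p$. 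The one spot you flag yourself as delicate is the right one: to finish Claim 3 you should explicitly record that, after applying the projection formula, $\cali{V}_{L(S)}(R)-d^{-1}\cali{V}_S(\Lambda R)$ is a constant independent of $R$ (the quasi-potential corrections $\beta$, the means, and $c_S$, $c_{L(S)}$ are all scalars against $\agl{\omega^{p-1},R}=1$), and only then invoke the normalization $\cali{V}_{L(S)}(T^{k-p+1})=0=d^{-1}\cali{V}_S(T^{k-p+1})$ to conclude the constant vanishes; passing from smooth $R$ to all of $\cali{C}_{k-p+1}$ then goes by the $\theta$-regularization continuity as you say.
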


\begin{lem}[See Lemma 5.4.9 in ~\cite{superpotentials}]\label{lem:549}
If $R\in\cali{C}_{k-p+1}$ is smooth, then $\cali{V}_S(\Lambda(R))=\agl{V_S, \Lambda(R)}_{\proj^k\setminus \Psi_1}$.
\end{lem}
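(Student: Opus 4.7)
The plan is to use the functional equation of the dynamical super-potential from Lemma \ref{lem:546} to rewrite the left-hand side as an honest integral, and then to match it with the right-hand side via change of variables under $f$ away from the critical locus $V$.

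First I would note that, since $R \in \cali{C}_{k-p+1}$ is smooth, Lemma \ref{lem:546} yields
\[
\cali{V}_S(\Lambda(R))=d\,\cali{V}_{L(S)}(R)=d\langle V_{L(S)},R\rangle,
\]
so the left-hand side equals the honest integral $d\langle V_{L(S)},R\rangle$.

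Next, since $f$ restricts to a $d^k$-sheeted unramified covering $\proj^k\setminus f^{-1}(V)\to\proj^k\setminus V$, the current $\Lambda(R)=d^{1-p}f_{*}(R)$ is smooth on $\proj^k\setminus V$. Because $V_S$ has $L^r$-coefficients by Theorem \ref{thm:Greenqpotential}, the pairing $\langle V_S,\Lambda(R)\rangle_{\proj^k\setminus V}$ is a well-defined Lebesgue integral. Applying the standard change of variables for currents across the biholomorphic locus of $f$,
\[
\langle V_S,\Lambda(R)\rangle_{\proj^k\setminus V}=d^{1-p}\int_{\proj^k\setminus V}V_S\wedge f_{*}R=d^{1-p}\int_{\proj^k\setminus f^{-1}(V)}f^{*}V_S\wedge R=d^{1-p}\langle f^{*}V_S,R\rangle,
\]
where the last equality uses that $f^{-1}(V)$ is a proper analytic subset, hence of Lebesgue measure zero, and both factors are $L^r$ or smooth.

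Finally, I would compare $f^{*}V_S$ with $d^pV_{L(S)}$. Using $f^{*}S=d^pL(S)$ and $f^{*}T^p=d^pT^p$ together with $dd^c V_S=S-T^p$ and $dd^c V_{L(S)}=L(S)-T^p$, a direct computation gives
\[
dd^c(f^{*}V_S-d^pV_{L(S)})=f^{*}(S-T^p)-d^p(L(S)-T^p)=0,
\]
so $f^{*}V_S-d^pV_{L(S)}$ is a $dd^c$-closed $(p-1,p-1)$-current. The specific normalizations built into $V_S$ and $V_{L(S)}$ through the constants $c_S$ and $c_{L(S)}$, imposed so that $\cali{V}_S(T^{k-p+1})=\cali{V}_{L(S)}(T^{k-p+1})=0$ and compatible with the functional equation of Lemma \ref{lem:546}, force the residual cohomology class of this difference in $H^{p-1,p-1}(\proj^k)$ to vanish. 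The difference is therefore $dd^c$-exact, so it pairs trivially with the closed smooth form $R$, giving $\langle f^{*}V_S,R\rangle=d^p\langle V_{L(S)},R\rangle$. Combining with the display above yields $\langle V_S,\Lambda(R)\rangle_{\proj^k\setminus V}=d\langle V_{L(S)},R\rangle$, matching the left-hand side.

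The main technical obstacle is making the change of variables rigorous for the $L^r$-current $V_S$ across the ramified part of $f$, and verifying that the residual cohomological constant actually vanishes; both points rest on the integrability estimates from Theorem \ref{thm:Greenqpotential} and on the careful bookkeeping of the normalizing constants $c_S$ and $c_{L(S)}$ in the definition of the dynamical super-potential together with the functional equation of Lemma \ref{lem:546}.
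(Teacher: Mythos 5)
The paper cites this lemma from Dinh--Sibony without reproving it, so there is no internal proof to compare against; I assess your proposal on its own. The opening reduction via Lemma~\ref{lem:546} to $\cali{V}_S(\Lambda(R))=d\langle V_{L(S)},R\rangle$ is correct and is the natural entry point. The remainder, however, leaves the substance of the lemma unproved.

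The change of variables $\langle V_S,\Lambda(R)\rangle_{\proj^k\setminus V}=d^{1-p}\langle f^*V_S,R\rangle$ requires the integrability of $V_S\wedge\Lambda(R)$ near $V$, and this is not free: the coefficients of $\Lambda(R)=d^{1-p}f_*R$ blow up along $V$, while Theorem~\ref{thm:Greenqpotential} only gives $V_S$ coefficients in $L^r$ with $r<k/(k-1)$, and the product of an $L^1$ density with an $L^r$ density need not be $L^1$. Quantitative control of the Green quasi-potential near $V$ against powers of $\frac{1}{d}f_*\omega$ --- the content of Lemmas~\ref{lem:237}--\ref{lem:2310} and their ancestors in \cite{superpotentials} --- is exactly what is needed here, and it is essentially the entire content of the statement; labelling it ``the main technical obstacle'' and moving past it leaves the core unestablished. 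The cohomological step is also circular as written: since $f^*V_S-d^pV_{L(S)}$ is $dd^c$-closed, it is $c\,\omega^{p-1}$ modulo $dd^c$-exact currents, and to see $c=0$ one must pair against a closed $(k-p+1,k-p+1)$-form of unit mass. Pairing against $\omega^{k-p+1}$ and applying your change of variables reduces to evaluating $\langle V_S,\Lambda(\omega^{k-p+1})\rangle_{\proj^k\setminus V}$, which is precisely an instance of the singular pairing the lemma is meant to identify; pairing against $T^{k-p+1}$ fares no better since $T^{k-p+1}$ is not smooth, so $\langle V_S,T^{k-p+1}\rangle=\cali{V}_S(T^{k-p+1})$ is not automatic either. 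The normalization via $c_S$ and $c_{L(S)}$ does guarantee the functional equation at the super-potential level, but it does not independently yield the $dd^c$-exactness of $f^*V_S-d^pV_{L(S)}$ without first settling those singular pairings.
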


By Proposition ~\ref{prop:319} and Proposition ~\ref{prop:326}, Proposition ~\ref{prop:mainthm2} implies ~Theorem ~\ref{thm:mainthm}.
\begin{prop}\label{prop:mainthm2}
Let $f:\proj^k\to\proj^k$ be a holomorphic endomorphism of degree $d\geq 2$. Then, there is a proper invariant analytic subset $E$ for $f$ such that for any current $S\in\cali{C}_p$ which is smooth on $E$ and for any smooth form $R\in\cali{C}_{k-p+1}$,we have $
\cali{V}_{L^n(S)}(R)\to 0$ as $n\to\infty$. The convergence is exponentially fast.
\end{prop}

\begin{proof}[The proof of Proposition ~\ref{prop:mainthm2}]
It suffices to prove the statement for some iterate of $f$. We use Section ~\ref{sec:analyticcocycle} to find a good iterate of $f$. Let $N_f\in\mathbb{N}$ be such that $(40k^2c_\Psi N_f)^{8k}<d^{N_f}$, where $c_\Psi$ is as in Section ~\ref{sec:analyticcocycle}. Then, we choose $\lambda>0$ such that
\begin{displaymath}
(40k^2c_\Psi N_f)^\frac{1}{(k+1)N_f}\cdot d^{\frac{8k(k+1)-1}{8k(k+1)}}<\lambda<d.
\end{displaymath}
Define $E:=E_\lambda\cup E_\lambda'$ where $E_\lambda:=\set{\mu_-\geq\frac{d}{\lambda}}$ and $E_\lambda':=\set{\mu'_-\geq\frac{d}{\lambda}}$ as in Section ~\ref{sec:analyticcocycle}. This $E$ is the desired invariant analytic subset. By Lemma \ref{lem:multiplicityconditions}, there exists $N_E\in\mathbb{N}$ such that for any $j\in\mathbb{N}$,
\begin{enumerate}
\item the function $\mu_{-jN_E}$ is $<\left(\frac{d}{\lambda}\right)^{jN_E}$ on $\proj^k\setminus E$ and
\item $\nu(x, [\Psi_{jN_E}])<c_\Psi jN_E\left(\frac{d}{\lambda}\right)^{jN_E(k+1)}$ for $x\in\Psi_{jN_E}\setminus E$.
\end{enumerate}
Then, consider $f^{N_fN_E}$ with $\delta:=c_\Psi N_fN_E\left(\frac{d}{\lambda}\right)^{N_fN_E(k+1)}$. From the choice of $\lambda$,
\begin{eqnarray*}
(20k^2(\delta+1/2))^{8k}<(40k^2\delta)^{8k}
\leq\Big[\Big(40k^2c_\Psi N_f\Big(\frac{d}{\lambda}\Big)^{N_f(k+1)}\Big)^{8k}\Big]^{N_E}\leq d^{N_fN_E}.
\end{eqnarray*}
Hence, since $E$ is invariant under $f$, by replacing $f$ by $f^{N_fN_E}$ and $d$ by $d^{N_fN_E}$ and defining $\delta:=c_\Psi N_fN_E\left(\frac{d}{\lambda}\right)^{N_fN_E(k+1)}$, we may assume that $f$ satisfies:
\begin{enumerate}
\item the function $\mu_{-1}<\delta$ on $\proj^k\setminus E$ and $\nu(x, [\Psi_1])<\delta$ for $x\in\Psi_1\setminus E$,
\item the analytic subset $E$ is invariant under $f$, and
\item $(20k^2(\delta+1/2))^{8k}<(40k^2\delta)^{8k}<d$.
\end{enumerate}

For the proof, we modify the proof of Proposition 5.4.10 of \cite{superpotentials}. First, we divide $\proj^k$ into three regions for computational purposes.
For the rest of the paper, let $V$ denote the hypersurface $\Psi_1$ of the critical values of $f$.

Since $E$ is invariant under $f$, $L^j(S)$ is smooth over $E$ for $j=0, 1,...$. Denote by $O$ and ${O_j}$ the connected open neighborhoods of $E$ where $S$ and $L^j(S)$ are smooth, respectively. Consider the following Corollary ~\ref{cor:lojasiewicz} and recall Lemma ~\ref{lem:3.1}.

\begin{cor}[See Proposition \ref{prop:lojasiewicz1} in Section \ref{sec:lineq} or Corollary 4.4 in \cite{equidistributionspeed}]\label{cor:lojasiewicz}
There are an integer $N_4$ and a constant $c_4\geq 1$ such that if $0<t<1$ is a constant and if $x, y$ are two points in $\proj^k$ with $\dist(x, E)>t$ and $\dist(y, E)>t$, then we can write
\begin{displaymath}
f^{-1}(x)=\set{x^1, ..., x^{d^k}}\,\,\,\textrm{ and }\,\,\,f^{-1}(y)=\set{y^1, ..., y^{d^k}}
\end{displaymath}
with $\dist(x^i, y^i)\leq c_4t^{-N_4}\dist(x, y)^{1/\delta}$.
\end{cor}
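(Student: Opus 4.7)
The plan is to deduce Corollary~\ref{cor:lojasiewicz} from Propositions~\ref{prop:lojasiewicz} and~\ref{prop:lojasiewicz1} via a finite-covering compactness argument, using the total invariance of $E$ to propagate the local multiplicity bound $\kappa_1 < \delta$ to each fiber of $f$.

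First, I would cover $\proj^k$ by a finite collection of coordinate polydiscs $\{\Delta_\alpha\}$ together with slightly shrunken polydiscs $\{\Delta_\alpha''\}$ whose closures still cover $\proj^k$. Since $f$ is a finite proper holomorphic map, after shrinking the $\Delta_\alpha$ if necessary, for every $\alpha$ the preimage $f^{-1}(\Delta_\alpha)$ decomposes as a disjoint union $\bigsqcup_i \Delta_{\alpha,i}'$ of polydiscs, one around each point in the fiber over the center of $\Delta_\alpha$, with $f : \Delta_{\alpha,i}' \to \Delta_\alpha$ a ramified covering of some degree $m_{\alpha,i}$. For each pair $(\alpha,i)$, define the analytic graph $X_{\alpha,i} := \{(x,z) \in \Delta_\alpha \times \Delta_{\alpha,i}' : f(z) = x\}$, which projects to $\Delta_\alpha$ as a ramified cover of degree $m_{\alpha,i}$, with the multiplicity of a fiber point $z$ equal to $\kappa_1(z)$.

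Next, I would verify the hypothesis of Proposition~\ref{prop:lojasiewicz1} with $Z := E \cap \Delta_\alpha$, which is a proper analytic subset of $\Delta_\alpha$ since $E$ has codimension at least one. Because $E$ is totally invariant, for $x \in \Delta_\alpha \setminus E$ every preimage $z$ lies outside $E$, so $\kappa_1(z) \leq \kappa_{-1}(x) < \delta$ by the standing hypothesis on $f$ arranged just before the corollary. Applying Proposition~\ref{prop:lojasiewicz1} when $m_{\alpha,i} > \delta$, and Proposition~\ref{prop:lojasiewicz} when $m_{\alpha,i} \leq \delta$ (whose exponent $1/m_{\alpha,i} \geq 1/\delta$ yields a stronger bound once we may assume $\|x - y\| \leq 1$), produces in either case a matching $\|z_x^j - z_y^j\| \leq C t^{-N_2} \|x - y\|^{1/\delta}$ for $x, y \in \Delta_\alpha''$ with $dist(x, E), dist(y, E) \geq t$.

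Finally, one assembles the local matchings on the disjoint pieces $\Delta_{\alpha,i}'$ to obtain a full matching of the $d^k = \sum_i m_{\alpha,i}$ preimages of $x$ with those of $y$. Taking the maximum of the finitely many constants over $\alpha$, using the local equivalence of the Fubini--Study and Euclidean metrics on each chart, and enlarging $c_2$ to absorb the trivial regime where $\|x - y\|$ is too large for $x,y$ to fit in a common $\Delta_\alpha''$, yields the uniform statement. The main obstacle is purely bookkeeping: arranging the finite cover so that the disjoint decomposition $f^{-1}(\Delta_\alpha) = \bigsqcup_i \Delta_{\alpha,i}'$ is valid simultaneously for all $\alpha$, and checking that the matchings across the different components combine into a consistent global matching -- both of which follow by shrinking the polydiscs using the properness of $f$ together with the compactness of $\proj^k$.
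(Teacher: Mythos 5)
Your argument is correct and is essentially the fully detailed version of what the paper leaves implicit: the paper justifies Corollary~\ref{cor:lojasiewicz} only by pointing to the standing multiplicity bound $\kappa_{-1}<\delta$ off $E$, implicitly invoking Proposition~\ref{prop:lojasiewicz1} locally on the graph of $f$. You spell out the localization, the use of $f^{-1}(\proj^k\setminus E)\subseteq\proj^k\setminus E$ to propagate the bound to fibers, the case split between Propositions~\ref{prop:lojasiewicz} and~\ref{prop:lojasiewicz1} according to whether the local degree exceeds $\delta$, and the compactness patching -- all in line with the paper's intent.
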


\begin{lem}[See Lemma 3.1 in ~\cite{equidistributionspeed}]\label{lem:3.1}
There is a constant $A_1\geq 1$ such that for every subsets $X$ and $Y$ of $\proj^k$, we have that for $j\geq 0$,
\begin{displaymath}
\dist(f^{-j}(X), f^{-j}(Y))\geq {A_1}^{-j}\dist(X, Y).
\end{displaymath}
In particular, if $f(Y)\subseteq Y$, we have that for $j\geq 0$,
\begin{displaymath}
\dist(f^{-j}(X), Y)\geq{A_1}^{-j}\dist(X, Y).
\end{displaymath}
\end{lem}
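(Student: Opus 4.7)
The plan is to reduce everything to the Lipschitz property of $f$ with respect to the Fubini-Study metric. Since $\proj^k$ is a compact manifold and $f:\proj^k\to\proj^k$ is holomorphic, hence $\cali{C}^\infty$, there is a finite Lipschitz constant $L\geq 1$ such that $dist(f(x), f(y))\leq L\cdot dist(x, y)$ for all $x, y\in\proj^k$. Iterating this bound gives $dist(f^j(x), f^j(y))\leq L^j\cdot dist(x, y)$ for every $j\geq 0$.

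Next I would deduce the first inequality directly from this. Take any $x'\in f^{-j}(X)$ and $y'\in f^{-j}(Y)$, so that $f^j(x')\in X$ and $f^j(y')\in Y$. Then
\begin{displaymath}
dist(X, Y)\;\leq\; dist(f^j(x'), f^j(y'))\;\leq\; L^j\cdot dist(x', y'),
\end{displaymath}
which rearranges to $dist(x', y')\geq L^{-j}\,dist(X, Y)$. Taking the infimum over all such $x'$ and $y'$ yields $dist(f^{-j}(X), f^{-j}(Y))\geq L^{-j}\,dist(X, Y)$, so the statement holds with $A_1:=L$.

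For the second assertion, I would show $Y\subseteq f^{-j}(Y)$ under the hypothesis $f(Y)\subseteq Y$. Indeed, for any $y\in Y$, iterating $f(Y)\subseteq Y$ gives $f^j(y)\in Y$, hence $y\in f^{-j}(Y)$. Consequently $dist(f^{-j}(X), Y)\geq dist(f^{-j}(X), f^{-j}(Y))$, and combining with the first part gives the claim.

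There is no real obstacle here: the only non-trivial input is the Lipschitz bound for $f$, which is a direct consequence of the smoothness of $f$ and the compactness of $\proj^k$. The argument is purely metric and does not interact with the holomorphic or complex-analytic structure beyond ensuring that $f$ is Lipschitz.
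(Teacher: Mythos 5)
Your proof is correct. The paper itself gives no proof — it simply cites Lemma 3.1 of Dinh--Sibony's \emph{Equidistribution speed for endomorphisms of projective spaces} — and the argument there is precisely the Lipschitz argument you give: $f$ is $\mathcal{C}^\infty$ on the compact manifold $\proj^k$, hence Lipschitz with some constant $L\geq 1$ (it cannot be a contraction since it is surjective), one iterates to get $dist(f^j(x),f^j(y))\leq L^j\,dist(x,y)$, and then takes infima over $f^{-j}(X)\times f^{-j}(Y)$. Your deduction of the second inequality from $Y\subseteq f^{-j}(Y)$ and the monotonicity of $dist(A,\cdot)$ under set inclusion is also correct. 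There is nothing to criticize.
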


We take $X=O^c$ and $Y=E$. Then, we have
\begin{displaymath}
\dist({O_j}^c, E)\geq \dist(f^{-j}(O^c), E)\geq {A_1}^{-j} \dist(O^c, E).
\end{displaymath}
Since $E$ is compact, there exists $r>0$ with $\dist(O^c, E)>r$. Then, ${O_j}^c\cap E_{r{A_1}^{-j}}=\emptyset$. Let $\eps>0$ be sufficiently small so that $\eps\ll 1$ and $\eps<\min\set{r, {A_1}^{-1}}^2$. We define three sequences $\set{s_{n,i}}, \set{\eps_{n,i}}$ and $\set{t_{n,i}}$ of positive real numbers:
\begin{itemize}
\item $s_{n,i}=\eps^{ni}$,

\item $\eps_{n,i}=\eps^{nC(2+N_4)(40k^2\delta)^{6ki}}$, and

\item $t_{n,i}={\eps_{n,i}}^{(10k)^{-1}}$,
\end{itemize}
for $0\leq i\leq n$, where $N_4$ is the constant in Corollary ~\ref{cor:lojasiewicz} and $C>1$ is the constant in Lemma ~\ref{lem:lojasiewicz}. Let $W_{n,i}'':=E_{s_{n,i}}$, $W_{n,i}:=V_{t_{n,i}}\setminus W_{n,i}''$, and $W_{n,i}':=\proj^k\setminus(W_{n,i}\cup W_{n,i}'')$. From our definitions, $O_{n-i}\cap\frac{1}{2\sqrt{\eps}}W_{n,i}''=\emptyset$, where $\frac{1}{2\sqrt{\eps}}W_{n,i}''$ means the $\frac{1}{2\sqrt{\eps}}s_{n,i}$-neighborhood of $E$. 
Roughly speaking, each part has the following properties:
\begin{itemize}
\item on $W_{n,i}$, $f$ and $V$ have low multiplicities but $L^{n-i}(S)$ can be singular,
\item on $W_{n,i}'$, $f$ and $V$ have multiplicities $=1$ and $L^{n-i}(S)$ is smooth, and
\item on $W_{n,i}''$, $f$ or $V$ can have high multiplicities but $L^{n-i}(S)$ is smooth.
\end{itemize}
Also, observe that $s_{n,i}\gg t_{n,i}$. 
\medskip

Let $U_j$ and $V_j$ denote the Green quasi-potential and the dynamical Green quasi-potential of $L^j(S)$, respectively for $j=0, 1, 2, \cdots$. Let $R_{n,i}$ be the sequence of positive closed $(k-p+1, k-p+1)$-currents of mass $1$ defined by 
\begin{itemize}
\item $R_{n,0}:=R$ and
\item $R_{n,i}$ is the $\eps_{n,i}$-regularization of $\Lambda(R_{n,i-1})$.
\end{itemize}
Then, by Lemma ~\ref{lem:546} and Lemma ~\ref{lem:549}, we have
\begin{eqnarray}
\nonumber\cali{V}_{L^n(S)}(R)&=&d^{-n}\cali{V}_S(\Lambda^n(R))\\
\nonumber&=&d^{-1}\cali{V}_{L^{n-1}(S)}(\Lambda(R_{n,0}))\\
\nonumber&=&d^{-1}\agl{V_{n-1}, \Lambda(R_{n,0})-R_{n,1}}_{\proj^k\setminus
V}+d^{-1}\agl{V_{n-1}, R_{n,1}}\\
\nonumber&=&d^{-1}\agl{V_{n-1}, \Lambda(R_{n,0})-R_{n,1}}_{\proj^k
\setminus V}+d^{-1}\cali{V}_{L^{n-1}(S)}(R_{n,1})\\
\nonumber&&\cdots\\
\label{eqn:expansion}&=&d^{-1}\agl{V_{n-1},
\Lambda(R_{n,0})-R_{n,1}}_{\proj^k\setminus
V}+ \cdots \\
\nonumber&&+d^{-i}\agl{V_{n-i},
\Lambda(R_{n,i-1})-R_{n,i}}_{\proj^k\setminus V}+\cdots\\
\nonumber&&+d^{-n}\agl{V_0, \Lambda(R_{n,n-1})-R_{n,n}}_{\proj^k\setminus
V}+ d^{-n}\cali{V}_S(R_{n,n}).
\end{eqnarray}

Since $\cali{V}_S$ is bounded above by a constant independent of $S$ (by Theorem \ref{thm:541} and Remark \ref{rem:Greenqpotential}), we only need to bound $\cali{V}_{L^n(S)}(R)$ from below.
\medskip

The current $V_{n-i}$ is neither positive nor negative in general. 
From Theorem ~\ref{thm:541}, we know that $T^{k-p+1}$ has bounded super-potentials. 
Hence, there exists a universal $c>0$ such that $V_{n-i}':=V_{n-i}+U_{T^p}-c\omega^{p-1}$ is negative in the current sense. In the rest of this section, the inequalities $\lesssim$ and $\gtrsim$ are up to a constant multiple independent of $\eps, i, n$ and $R$.
\medskip

We claim that for sufficiently small $\eps>0$,
\begin{enumerate}
\item $\agl{V_{n-i}', \Lambda(R_{n,i-1})-R_{n,i}}_{W_{n,i}\setminus V}\gtrsim-\eps^{ni}$,

\item $\agl{V_{n-i}', \Lambda(R_{n,i-1})-R_{n,i}}_{W_{n,i}'}\gtrsim-\eps^{ni}$, 

\item $\agl{V_{n-i}', \Lambda(R_{n,i-1})-R_{n,i}}_{W_{n,i}''\setminus V}\gtrsim-(1+\norm{S}_{\infty, E_{\eps}})\eps^{ni}$, and 

\item $d^{-n}\V_S(R_{n,n})\gtrsim nd^{-\frac{n}{4}}\log\eps$,
\end{enumerate}
where $\norm{S}_{\infty, E_{\eps}}$ is the sup-norm of $S$ restricted to $E_{\eps}$.

Suppose that our claims are true. Note that for a fixed $S\in\cali{C}_p$ smooth on $E$, $\norm{S}_{\infty, E_{\eps}}$ is bounded for all sufficiently small $\eps>0$. It is not difficult to see that $\dist_1(\Lambda(R_{n, i-1}), R_{n, i})\lesssim\eps_{n, i}$. Due to Equation \ref{eqn:expansion}, Theorem ~\ref{thm:541}
and the constant $c$ in $V_{n-i}'$ being universal,
we have that for all sufficiently small $\eps>0$,
\begin{eqnarray*}
\cali{V}_{L^n(S)}(R)&=&d^{-n}\cali{V}_S(\Lambda^n(R))\\
&\gtrsim& -d^{-1}\eps^n-d^{-2}\eps^{2n}\cdots-d^{-n}\eps^{n^2}+nd^{-\frac{n}{4}}\log\eps\gtrsim-\eps^n+nd^{-\frac{n}{4}}\log\eps.
\end{eqnarray*}
Take $\eps=d^{-n}$ and let $n\to\infty$. Our claims imply the proof.
\end{proof}

Hence, it remains to prove our claims. The computations will be completed in Section \ref{sec:1}, Section \ref{sec:2}, Section \ref{sec:3} and Section \ref{sec:4}.

\section{The Proof of $\agl{V_{n-i}', \Lambda(R_{n,i-1})-R_{n,i}}_{W_{n,i}\setminus V}\gtrsim-\eps^{ni}$}\label{sec:1}
The main estimate Proposition \ref{prop:estimate_off_E} in this section is similar to Proposition 2.3.6 in \cite{superpotentials} but has an extra difficulty; we have to handle neighborhoods of $E$. For this purpose, we use cut-off functions for the neighborhoods of $E$, whose DSH-norm shows very tame behaviors. We also use the H\"{o}lder continuity of the quasi-potentials of $f_*(\omega)$ together with the current inequality $f_*(\omega^p)\leq (f_*\omega)^p$. Here, H\"older continuity must be handled with some care about $E$.
\medskip

We start by constructing cut-off functions from the following two lemmas. By applying the $c_{\mathrm{small}}\theta$-regularization of currents to the characteristic function of the neighborhood $K_\theta$ of a compact set $K\subset \proj^k$ for a sufficiently small constant $c_{\mathrm{small}}>0$ independent of $K$ and $\theta$, we obtain the following:
\begin{lem}\label{lem:smoothing}
Assume that $\theta$ is sufficiently small. Let $K\subset\proj^k$ be compact. Then, there exists a smooth cut-off function $\chi^{K, \theta}:\proj^k\to[0, 1]$ such that $\chi^{K, \theta}\equiv 1$ over $K$ and $\supp(\chi^{K, \theta})\subset K_\theta$. Moreover, $\norm{\chi^{K, \theta}}_{\cali{C}^\alpha}\leq c_{\chi, \alpha}\abs{\theta}^{-\alpha}$, for some $c_{\chi, \alpha}>0$ independent of $K$ and $\theta$.
\end{lem}

\begin{lem}[See Lemma 2.2.6 in ~\cite{superpotentials}]\label{lem:cvxincftn}
Let $\chi:\Real\cup\set{-\infty}\to\Real$ be a convex increasing function such that $\chi'$ is bounded. Then, for every DSH function $\varphi$, $\chi(\varphi)$ is DSH and
\begin{displaymath}
\norm{\chi(\varphi)}_\dsh\lesssim 1+\norm{\varphi}_\dsh.
\end{displaymath}
In particular, $\inf\norm{T^-_\chi}$ is bounded by $\norm{\chi'}_\infty\inf\norm{T^-}$, where $T^\pm_\chi$ and $T^\pm$ are positive closed currents such that $dd^c\chi(\varphi)=T^+_\chi-T^-_\chi$ and $dd^c\varphi=T^+-T^-$ and $\inf$ is taken over such $T^-_\chi$'s and $T^-$'s, respectively.
\end{lem}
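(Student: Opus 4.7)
The plan is to split $\norm{\chi(\varphi)}_{DSH}$ into its $\cali{L}^1$ part and its $dd^c$--decomposition part and handle each separately; the real content is a clean positive closed decomposition of $dd^c\chi(\varphi)$. For the $\cali{L}^1$ part, the hypothesis that $\chi'$ is bounded makes $\chi$ globally Lipschitz on $\Real$ with constant $\norm{\chi'}_\infty$, and since $\chi(-\infty)\in\Real$ is finite by assumption, one has $|\chi(\varphi)|\leq |\chi(0)|+\norm{\chi'}_\infty|\varphi|$ almost everywhere (the locus $\set{\varphi=-\infty}$ is pluripolar, hence has measure zero). Integrating against the Fubini--Study volume yields $\norm{\chi(\varphi)}_{\cali{L}^1}\lesssim 1+\norm{\varphi}_{\cali{L}^1}\leq 1+\norm{\varphi}_{DSH}$.

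For the decomposition of $dd^c\chi(\varphi)$, fix positive closed currents $T^+,T^-$ with $dd^c\varphi=T^+-T^-$. I claim the negative part for $\chi(\varphi)$ can be taken to be $T^-_\chi:=\norm{\chi'}_\infty T^-$, which gives exactly the ``in particular'' statement; the core point is that
\begin{displaymath}
T^+_\chi:=dd^c\chi(\varphi)+\norm{\chi'}_\infty T^-
\end{displaymath}
is a positive closed $(1,1)$--current. Closedness is automatic since $dd^c\chi(\varphi)$ is exact and $T^-$ is closed. For positivity, first suppose $\varphi$ is smooth; the chain rule gives
\begin{displaymath}
dd^c\chi(\varphi)=\chi'(\varphi)(T^+-T^-)+\chi''(\varphi)\,d\varphi\wedge d^c\varphi,
\end{displaymath}
so
\begin{displaymath}
T^+_\chi=\chi'(\varphi)\,T^++\bigl(\norm{\chi'}_\infty-\chi'(\varphi)\bigr)T^-+\chi''(\varphi)\,d\varphi\wedge d^c\varphi,
\end{displaymath}
and each summand is manifestly non-negative: $\chi'(\varphi)\in[0,\norm{\chi'}_\infty]$ because $\chi$ is increasing and $\chi'$ is bounded, $\chi''(\varphi)\geq 0$ by convexity, and $d\varphi\wedge d^c\varphi\geq 0$ for real-valued $\varphi$.

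For a general DSH $\varphi$, I would regularize using the $\theta$--regularization on $\Aut(\proj^k)$ already in use in the paper: $\varphi_\theta$ is smooth and $\varphi_\theta\to\varphi$ in $\cali{L}^1$ as $\theta\to 0$, with $dd^c\varphi_\theta=(T^+)_\theta-(T^-)_\theta$, where $(T^\pm)_\theta$ are the smooth regularizations of $T^\pm$, of the same total mass. The smooth case furnishes positive closed currents $dd^c\chi(\varphi_\theta)+\norm{\chi'}_\infty(T^-)_\theta\geq 0$ of mass $\norm{\chi'}_\infty\norm{T^-}$. The main obstacle is justifying the passage to the weak limit: here the Lipschitz bound on $\chi$ pays off, since $\norm{\chi(\varphi_\theta)-\chi(\varphi)}_{\cali{L}^1}\leq\norm{\chi'}_\infty\norm{\varphi_\theta-\varphi}_{\cali{L}^1}\to 0$, so $dd^c\chi(\varphi_\theta)\to dd^c\chi(\varphi)$ weakly; combined with $(T^-)_\theta\to T^-$ weakly, the limit $dd^c\chi(\varphi)+\norm{\chi'}_\infty T^-$ is the weak limit of positive closed currents and is therefore itself positive and closed, of the same mass $\norm{\chi'}_\infty\norm{T^-}$. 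Taking the infimum over decompositions $dd^c\varphi=T^+-T^-$ then gives $\inf\norm{T^-_\chi}\leq\norm{\chi'}_\infty\inf\norm{T^-}$, and combining with the $\cali{L}^1$ estimate yields the announced bound $\norm{\chi(\varphi)}_{DSH}\lesssim 1+\norm{\varphi}_{DSH}$.
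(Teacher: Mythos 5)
The paper states this lemma with a citation to Dinh--Sibony (Lemma 2.2.6 of \cite{superpotentials}) and does not reprove it, so there is no in-paper argument to compare against; I will assess your proof on its own.

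Your argument is correct and is essentially the standard proof of this lemma. The decomposition
\begin{displaymath}
T^+_\chi=\chi'(\varphi)\,T^++\bigl(\norm{\chi'}_\infty-\chi'(\varphi)\bigr)T^-+\chi''(\varphi)\,d\varphi\wedge d^c\varphi
\end{displaymath}
in the smooth case is exactly the right manipulation: each summand is a nonnegative current because $0\leq\chi'\leq\norm{\chi'}_\infty$, $\chi''\geq0$, and $d\varphi\wedge d^c\varphi\geq0$. The $\cali{L}^1$ bound via the Lipschitz constant, the observation that $\norm{T^-_\chi}=\norm{\chi'}_\infty\norm{T^-}$ by cohomology (so the infimum estimate follows by varying the decomposition of $\varphi$), and the passage to the limit by $\theta$--regularization (using $\norm{\chi(\varphi_\theta)-\chi(\varphi)}_{\cali{L}^1}\leq\norm{\chi'}_\infty\norm{\varphi_\theta-\varphi}_{\cali{L}^1}$ to get weak convergence of $dd^c\chi(\varphi_\theta)$, and using that positivity and mass persist under weak limits of closed currents) are all sound.

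One small point worth noting: the chain-rule identity you invoke requires $\chi\in\cali{C}^2$, whereas the hypothesis only gives a convex increasing $\chi$ with $\chi'$ bounded. To cover a non-smooth convex $\chi$ you should first mollify $\chi$ by smooth convex increasing $\chi_\eps$ with $\chi_\eps'\leq\norm{\chi'}_\infty$ and pass to the limit in $\eps$ as well; this is routine and does not affect the conclusion. In the paper's only application (Lemma~\ref{lem:cut-off}), $\chi$ is explicitly chosen smooth, so this is harmless, but it is worth being explicit about in a self-contained proof.
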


We construct the desired cut-off functions.
\begin{lem}\label{lem:cut-off}
Let $s, t$ be two positive real numbers such that $\frac{1}{2}(\frac{2}{3})^ks_{n,i}\leq s\leq 2s_{n,i}$ and $\frac{1}{2}t_{n,i}\leq t\leq 2{t_{n,i}}^{(\delta+1/2)^{-k}}$ for some $s_{n,i}, t_{n,i}$ with sufficiently small $\eps>0$. Then, there is a function $\chi_{s,t}:\proj^k\to\Real$ with $0\leq\chi_{s,t}\leq 1$, such that $\chi_{s,t}\equiv 1$ on $V_t\setminus {E_s}$, $\supp (\chi_{s,t})\subseteq V_{t^{(\delta+1/2)^{-1}}}\setminus E_{\frac{2}{3}s}$, and $\norm{\chi_{s,t}}_\dsh\leq c_\chi\max\set{1, 9s^{-2}}$, where $c_\chi>0$ is a constant independent of $s$, $t$, and $\eps$, $i$, $n$ in $s_{n,i}$, $t_{n,i}$.
\end{lem}

\begin{proof}
We choose a smooth convex increasing function $\chi:\Real\cup\set{-\infty}\to[0, \infty)$ such that $\chi(x)=0$ on $[-\infty, -1]$ and $\chi(x)=x$ for $x\geq 1$ as in Lemma ~\ref{lem:cvxincftn}. Then, $\max\set{x, 0}\leq\chi(x)\leq\max\set{x, 0}+1$. We define:
\begin{eqnarray*}
&&\phi_{s,t}:=-\chi(\varphi_V-\log t-A-1),\\
&&\tilde{\chi}_{s,t}:=\chi(\phi_{s,t}+1)\,\, \textrm{ and}\\
&&\chi_{s,t}:=\chi(2(\tilde{\chi}_{s,t}-\chi^{E_{\frac{2}{3}s}, \frac{1}{3}s})-1),
\end{eqnarray*}
where $\varphi_V$ is the q-psh function in Lemma \ref{lem:lojasiewicz} with our $V$, $\delta$ and $E_V=V\cap E$, and $\chi^{E_{\frac{2}{3}s}, \frac{1}{3}s}$ is the function in Lemma ~\ref{lem:smoothing} with $K=E_{\frac{2}{3}s}$ and $\theta=\frac{1}{3}s$.

For $z\in V_t$, from Lemma \ref{lem:lojasiewicz}, we have
\begin{displaymath}
\varphi_V(z)-\log t-A-1\leq\log \dist(z, V)+A-\log t-A-1\leq -1
\end{displaymath}
So, $\phi_{s,t}=0$ on $V_t$. Since the support of $\chi^{E_{\frac{2}{3}s}, \frac{1}{3}s}$ is in ${E_s}$, we have $\chi_{s,t}=1$ over $V_t\setminus {E_s}$.

For $z\in\proj^k\setminus( V_{t^{(\delta+1/2)^{-1}}}\cup E_{\frac{2}{3}s})$, from Lemma \ref{lem:lojasiewicz}, we have
\begin{eqnarray*}
&&\varphi_V-\log t-A-1\\
&&\geq\delta\log \dist(z, V)+C\log \dist(z, V\cap E)-A-\log t-A-1\\
&&\geq-\frac{1}{2\delta+1}\log (2{t_{n,i}}^{(\delta+1/2)^{-k}})+C\log \Big(\frac{1}{2}(\frac{2}{3})^ks_{n,i}\Big)+C\log\frac{2}{3}-2A-1\\
&&\geq-C(40k^2\delta)^{4k}\log\eps-\frac{1}{2\delta+1}\log 2+C\log\frac{1}{2}\Big(\frac{2}{3}\Big)^{k+1}-2A-1\geq 2
\end{eqnarray*}
since $\eps$ is sufficiently small. So, we have $\phi_{s,t}\leq -2$ and $\tilde{\chi}_{s,t}=0$ on $\proj^k\setminus( V_{t^{(\delta+1/2)^{-1}}}\cup E_{\frac{2}{3}s})$. Since $0\leq\tilde{\chi}_{s,t}\leq 1$ and $\chi^{E_{\frac{2}{3}s}, \frac{1}{3}s}=1$ over $E_{\frac{2}{3}s}$, we have $\chi_{s,t}=0$ outside $V_{t^{(\delta+1/2)^{-1}}}\setminus E_{\frac{2}{3}s}$. This proves the support part of the statement.

Since $0\leq\chi_{s,t}\leq 1$, the $\cali{L}^1$-norm of $\chi_{s,t}$ is uniformly bounded with respect to $s$, $t$, and $\eps$, $i$, $n$ in $s_{n,i}$, $t_{n,i}$. Thus, Lemma ~\ref{lem:cvxincftn} together with $\varphi$ being q-psh and $\norm{\chi^{E_{\frac{2}{3}s}, \frac{1}{3}s}}_{\cali{C}^2}\lesssim 9s^{-2}$ from Lemma \ref{lem:smoothing} prove that the DSH norm is bounded by $c_\chi\max\set{1, 9s^{-2}}$ for some constant $c_\chi>0$ independent of $s$, $t$, and $\eps$, $i$, $n$ in $s_{n,i}$, $t_{n,i}$. This completes the proof.
\end{proof}

We discuss the H\"{o}lder continuity of the quasi-potentials of $f_*(\omega)$ outside $E$.
\begin{defn}
Let $H, \nu>0$. A continuous map $g:\proj^k\to\Real$ is said to be $(H, \nu)$-H\"{o}lder continuous in a subset $U\subset\proj^k$ if every pair $x, y\in U$ satisfies
\begin{displaymath}
\abs{g(x)-g(y)}\leq H \dist(x,y)^\nu.
\end{displaymath}
\end{defn}

\begin{lem}\label{lem:Holder} Assume that $s$ is sufficiently small. The quasi-potentials of $f_*(\omega)$ are $(c_Hs^{-N_4}, \delta^{-1})$-H\"{o}lder continuous in ${E_s}^c$ with $c_H>0$ a constant independent of $s$, where $N_4$ is the constant in Corollary \ref{cor:lojasiewicz}.
\end{lem}

\begin{proof}

The H\"{o}lder continuity of the quasi-potentials of $f_*(\omega)$ over $\proj^k$ is directly from Lemma 5.4.8 in \cite{superpotentials}.

We now prove the $(c_Hs^{-N_4}, \delta^{-1})$-H\"{o}lder continuity of $v$ in ${E_s}^c$. Let $v$ be a quasi-potential of $f_*(\omega)$. Let $x\in{E_s}^c\subset\proj^k$. Consider a ball $B_x$ centered at $x$ and of radius 
$\frac{s}{2}$. If $y\notin B_x$, then
\begin{displaymath}
\abs{v(x)-v(y)}\leq 2\max_{\proj^k}v\leq \frac{2\max_{\proj^k}v}{(s/2)^{\delta^{-1}}}\dist(x, y)^{\delta^{-1}}.
\end{displaymath}
We consider the case where $y\in B_x$. On $B_x$, $v$ can be written as 
\begin{displaymath}
v(z)=\sum_{w\in f^{-1}(z)}u(w)\,\,\,\,\,\textrm{ for }z\in B_x,
\end{displaymath}
where $u$ is a smooth quasi-potential of $\omega$ and the points in $f^{-1}(z)$ are repeated according to their multiplicity.

By Corollary ~\ref{cor:lojasiewicz}, we can write, for $z$ and $z'$ in $B_x$,
\begin{displaymath}
f^{-1}(z)=\set{w^1, \cdots, w^{d^k}}\,\,\,\,\,\textrm{ and }\,\,\,\,\,f^{-1}(z')=\set{{w^1}', \cdots, {w^{d^k}}'},
\end{displaymath}
so that $\dist(w^j, {w^j}')\lesssim (\frac{s}{2})^{-N_4}\dist(z, z')^{\delta^{-1}}$. Hence,
\begin{displaymath}
\abs{v(x)-v(y)}\lesssim d^k\norm{u}_{\cali{C}^1}\max_{1\leq i\leq d^k} \dist(x^i, y^i)\lesssim s^{-N_4} \dist(x, y)^{\delta^{-1}},
\end{displaymath}
where $f(x^i)=x$ and $f(y^i)=y$ for $i=1, ..., d^k$ as above.
This
proves the lemma. The constant $c_H$ being independent of $s$ is clear from our argument. 
\end{proof}

Proposition \ref{prop:estimate_off_E} below is a modification of Proposition 2.3.6 in ~\cite{superpotentials}. Since we can apply the same proof of Proposition 2.3.6 in ~\cite{superpotentials} except Lemma 2.3.8, it suffices to modify Lemma 2.3.8 to Lemma \ref{lem:estimate_off_E} and prove Lemma \ref{lem:estimate_off_E} in order to show Proposition \ref{prop:estimate_off_E}. For convenience, we quote Lemma 2.3.7, Lemma 2.3.9 and Lemma 2.3.10 of ~\cite{superpotentials} after the proof of Lemma \ref{lem:estimate_off_E}.
\begin{prop}\label{prop:estimate_off_E} 
Let $s, t$ be two positive real numbers such that $\frac{1}{2}s_{n,i}\leq s\leq 2s_{n,i}$ and $\frac{1}{2}t_{n,i}\leq t\leq 2t_{n,i}$ for some $s_{n,i}, t_{n,i}$ with sufficiently small $\eps>0$. Let $R\in\cali{C}_p$ and $U$ the Green quasi-potential of a current $R$. Then, we have:
\begin{displaymath}
\abs{\int_{V_t\setminus({E_s}\cup V)}U\wedge f_*(\omega)^{k-p+1}}\leq c_Vt^\beta s^{-2-N_4},
\end{displaymath}
where $N_4$ is the constant in Corollary ~\ref{cor:lojasiewicz}, $\beta:=(20k^2(\delta+1/2))^{-k}\delta^{-k+p-1}$, and $c_V>0$ is a constant independent of $s$, $t$, $R$ and $\eps$, $i$, $n$ in $s_{n,i}$, $t_{n,i}$.
\end{prop}

Consider $M>0$ and define $\eta_M:=\min\set{0, M+\eta}$ where $\eta$ is from Proposition \ref{prop:kernel}. We have $\norm{\eta_M}_\dsh$ is uniformly bounded with respect to $M$
and $\eta_M-M\leq\eta$. Define $K_M:=-M\Theta$ and $K_M':=\eta_M\Theta$ where $\Theta$ is from Proposition \ref{prop:kernel}. Then, $K_M$ is negative closed and we have $K_M+K_M'\leq K$. Define
\begin{displaymath}
U_M(z):=\int_\xi R(\xi)\wedge K_M(z, \xi)\,\,\,\textrm{ and }\,\,\,U_M'(z):=\int_\xi R(\xi)\wedge K_M'(z, \xi).
\end{displaymath}
The form $U_M$ is negative closed and of mass $\simeq M$ and $U_M+U_M'\leq U$. Choose $M:=t^{-\beta}$. Note that $U$ is negative and that $\Theta$ has singularities of order $\dist(z, \xi)^{2-2k}$.
\medskip

We modify Lemma 2.3.8 in \cite{superpotentials}. From here to Lemma \ref{lem:2310}, the inequalities $\lesssim$ and $\gtrsim$ are up to a constant multiple independent of $s$, $t$, $R$ and $\eps$, $i$, $n$ in $s_{n,i}$, $t_{n,i}$. By continuity,
 we may assume that $R$ and $U$ are smooth. We also have that $U_M$ is smooth.

\begin{lem}\label{lem:estimate_off_E} 
Let $l$ be an integer such that $0\leq l\leq k-p+1$. Let $s, t$ be two positive real numbers such that $\frac{1}{2}(\frac{3}{2})^{k-p-l+1}s_{n,i}\leq s\leq 2s_{n,i}$ and $\frac{1}{2}t_{n,i}\leq t\leq 2(t_{n,i})^{(\frac{1}{\delta+1/2})^{k-p-l+1}}$ for some $s_{n,i}, t_{n,i}$ with sufficiently small $\eps>0$. Then, we have
\begin{displaymath}
\abs{\int_{V_t\setminus ({E_s}\cup V)}U_M\wedge(\frac{1}{d^{k-1}}f_*(\omega))^l\wedge\omega^{k-p-l+1}}\lesssim t^{\beta_l}s^{-2-N_4},
\end{displaymath}
where $\beta_l=(20k^2(\delta+1/2))^{-l}\delta^{-l}$.
\end{lem}

\begin{proof} We use an induction argument on $l$. The case $l=0$ follows from Lemma 2.3.7 in \cite{superpotentials} (or see Lemma \ref{lem:237}). Assume that the estimate is true for $l-1$. Take $\chi_{s,t}$ in Lemma \ref{lem:cut-off}. Then,

\begin{eqnarray*}
&&-\int_{V_{t}\setminus E_{s}}U_M\wedge (\frac{1}{d^{k-1}}f_*(\omega))^l\wedge\omega^{k-p-l+1}\\
&&\leq-\int\chi_{s,t} U_M\wedge (\frac{1}{d^{k-1}}f_*(\omega))^l\wedge\omega^{k-p-l+1}\\
&&=\int-\chi_{s,t}U_M\wedge (\frac{1}{d^{k-1}}f_*(\omega))^{l-1}\wedge\omega^{k-p-l+2}\\
&&\hspace{.5in}+\int-\chi_{s,t}U_M\wedge (\frac{1}{d^{k-1}}f_*(\omega))^{l-1}\wedge dd^cu\wedge\omega^{k-p-l+1},
\end{eqnarray*}
where $\frac{1}{d^{k-1}}f_*(\omega)=\omega+dd^cu$ and $\sup_{\proj^k}u=0$. The bound of the first integral is from the induction hypothesis with the replacement of $t$ by ${t}^{\frac{1}{\delta+1/2}}$ and $s$ by $\frac{2}{3}s$.

We compute the bound of the second integral. Lemma ~\ref{lem:Holder} implies that $u$ is $(c_H(\frac{s}{2})^{-N_4}, \delta^{-1})$-H\"{o}lder continuous in ${E_\frac{s}{2}}^c$. By Proposition ~\ref{prop:216} and Lemma ~\ref{lem:218}, for small ${\eps_\mathrm{reg}}>0$ so that $\eps_\mathrm{reg}\ll s$, we can find a smooth function $u_{\eps_\mathrm{reg}}$ by regularizing $u$ by automorphisms of $\proj^k$. In the remaining of the proof, the constants related to the inequalities $\lesssim$ and $\gtrsim$ are also independent of $\eps_{\mathrm{reg}}$. Proposition ~\ref{prop:216} gives $\norm{u_{\eps_\mathrm{reg}}}_{\cali{C}^2}\lesssim {\eps_\mathrm{reg}}^{-2k^2-4k-2}$. 
Due to the H\"{o}lder continuity of $u$ in ${E_\frac{s}{2}}^c$ and Lemma \ref{lem:218}, we have
\begin{displaymath}
\norm{u-u_{\eps_\mathrm{reg}}}_{\infty, {E_\frac{2s}{3}}^c}\lesssim s^{-N_4}{\eps_\mathrm{reg}}^{\delta^{-1}}.
\end{displaymath}
Here, ${\eps_\mathrm{reg}}$ will be determined later. Then, the second integral can be split as follows:
\begin{eqnarray*}
&&\int-\chi_{s,t}U_M\wedge (\frac{1}{d^{k-1}}f_*(\omega))^{l-1}\wedge dd^cu\wedge\omega^{k-p-l+1}\\
&&= \int-\chi_{s,t}U_M\wedge (\frac{1}{d^{k-1}}f_*(\omega))^{l-1}\wedge dd^cu_{\eps_\mathrm{reg}}\wedge\omega^{k-p-l+1}\\
&&\hspace{.5in}+\int-(u-u_{\eps_\mathrm{reg}})dd^c\chi_{s,t}\wedge U_M\wedge (\frac{1}{d^{k-1}}f_*(\omega))^{l-1}\wedge\omega^{k-p-l+1}.
\end{eqnarray*}
From $\norm{u_{\eps_\mathrm{reg}}}_{\cali{C}^2}$ and the induction hypothesis together with the replacement of $t$ by ${t}^{\frac{1}{\delta+1/2}}$ and $s$ by $\frac{2}{3}s$, the first integral $\lesssim{\eps_\mathrm{reg}}^{-2k^2-4k-2}{t}^{\delta^{-1}\beta_{l-1}}{s}^{-2-N_4}$. 
The bound of the second integral is computed cohomologically. From the DSH-norm of $\chi_{s,t}$ and the mass of $U_M$ being of the same order of $M={t}^{-\beta}$, we have the second integral $\lesssim s^{-N_4}{\eps_\mathrm{reg}}^{\delta^{-1}}s^{-2}t^{-\beta}$. Take ${\eps_\mathrm{reg}}:={t}^{\delta^{-1}(2k^2+4k+2+\delta^{-1})^{-1}\beta_{l-1}}$. Note that due to the difference of the shrinking speed of $t_{n, i}$ and $s_{n, i}$ as $\eps\to 0$, we can always find such $\eps_{\mathrm{reg}}$ satisfying the above discussion for all sufficiently small $\eps>0$. Then,
\begin{displaymath}
1-\frac{2k^2+4k+2}{2k^2+4k+2+\delta^{-1}}\geq\frac{\delta^{-1}}{10k^2}.
\end{displaymath}
In result, we have
\begin{eqnarray*}
&&{t}^{\delta^{-1}\beta_{l-1}}{\eps_\mathrm{reg}}^{-2k^2-4k-2}\leq {t}^{\delta^{-1}\beta_{l-1}(10k^2)^{-1}\delta^{-1}}\leq {t}^{\beta_l}\\
&&{t}^{-\beta}{\eps_\mathrm{reg}}^{\delta^{-1}}\leq{t}^{-\beta}{t}^{(10k^2\delta)^{-1}\beta_{l-1}\delta^{-1}}\leq{t}^{-\beta}{t}^{2\beta_l}\leq{t}^{\beta_l}.
\end{eqnarray*}
This completes the induction and proves the lemma.
\end{proof}

Assume that $t$ is sufficiently small as in Proposition \ref{prop:estimate_off_E}.
\begin{lem}[See Lemma 2.3.7 in \cite{superpotentials}]\label{lem:237} $\abs{\int_{V_t}U_M\wedge\omega^{k-p+1}}\lesssim t$.
\end{lem}

\begin{lem}[See Lemma 2.3.9 in \cite{superpotentials}]\label{lem:239} $\norm{U'_M}\lesssim e^{-\frac{1}{2}M}$.
\end{lem}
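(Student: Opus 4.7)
The strategy is to exploit the very small support of $\eta_M$ and the controlled singularity of $\Theta$. By Lemma \ref{lem:233}, the function $\eta$ is DSH on $\proj^k\times\proj^k$, smooth off $D$, with $\eta-\log dist(\cdot,D)$ bounded, say $|\eta-\log dist|\le A_0$. Hence $\eta_M=\min\set{0,M+\eta}$ vanishes outside the tubular neighbourhood
\[
W_M:=\set{(z,\xi):dist(z,\xi)\le e^{-M+A_0}}
\]
of $D$, and on $W_M$ one has $|\eta_M|\le|\eta|\lesssim 1+|\log dist(z,\xi)|$. Combined with the singularity bound $\norm{\Theta}\lesssim dist^{2-2k}$ inherited from the construction of the kernel $K$ underlying Proposition \ref{prop:kernel}, the form $K_M'=\eta_M\Theta$ is supported in $W_M$ and satisfies $|K_M'|\lesssim (M+|\log dist|)\cdot dist^{2-2k}$ there when its coefficients are measured in a fixed atlas.

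The main step is the uniform pointwise estimate on the $(k-p,k-p)$-form
\[
\Psi(\xi):=\int_z K_M'(z,\xi)\wedge\omega^{k-p+1}(z).
\]
After decomposing $\Theta$ by bidegree, only the components of $\Theta$ whose $z$-part has bidegree $(p-1,p-1)$ survive the wedge with $\omega^{k-p+1}(z)$, and on each such component the $z$-integration reduces to a standard polar computation. Writing $\rho:=|z-\xi|$ in a local chart and bounding the coefficient-wise integrand by $(M+|\log\rho|)\rho^{2-2k}$ against the Euclidean volume element $\rho^{2k-1}\,d\rho\wedge d\sigma$, one obtains
\[
\norm{\Psi(\xi)}\lesssim\int_0^{e^{-M+A_0}}(M+|\log\rho|)\,\rho\,d\rho\lesssim Me^{-2M}
\]
uniformly in $\xi\in\proj^k$.

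Fubini together with positivity then closes the argument: since $\eta_M\le 0$ and $\Theta\ge 0$, the form $\Psi$ is negative, and
\[
\norm{U_M'}=-\int_\xi R(\xi)\wedge\Psi(\xi)\le\norm{R}\cdot\sup_\xi\norm{\Psi(\xi)}\lesssim Me^{-2M}\le e^{-M/2}
\]
for $M$ large enough, i.e.\ for $t$ small enough since $M=t^{-\beta}$ with $\beta>0$. The main obstacle is the bidegree bookkeeping in the pointwise bound on $\Psi$: one must verify that each coefficient of the $(k-p,k-p)$-form $\int_z\Theta(z,\xi)\wedge\omega^{k-p+1}(z)$ really inherits the scalar singularity $dist^{2-2k}$ of $\Theta$, which requires the explicit form of $\Theta$ in the construction of the kernel $K$. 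Once this is granted, the remainder is the two-line polar-coordinate computation above.
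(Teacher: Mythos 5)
Your argument is correct and, as far as one can tell from the paper (which merely cites Lemma~2.3.9 of \cite{superpotentials}), it is the natural and essentially the expected approach: use $\eta-\log dist(\cdot,D)$ bounded to confine $\operatorname{supp}\eta_M$ to the tube $\{dist(\cdot,D)\le e^{-M+A_0}\}$, multiply by the $dist^{2-2k}$ singularity of $\Theta$, and integrate in $z$ in polar coordinates to obtain a uniform pointwise bound $\lesssim Me^{-2M}$ on the $(k-p,k-p)$-form $\Psi(\xi)$, after which Fubini and the positivity of $R$ and $-\Psi$ give the mass bound. Two small remarks. First, the pointwise estimate passes from ``coefficients of $-\Psi$ bounded by $CMe^{-2M}$'' to ``$-\Psi\le c\,CMe^{-2M}\,\omega^{k-p}$'' via the standard linear-algebra fact that a positive $(q,q)$-form with coefficients bounded in a fixed atlas is dominated by a constant multiple of $\omega^{q}$; it is worth saying this explicitly since you implicitly invoke it when bounding $-\int R\wedge\Psi$ by $\|R\|\sup\|\Psi\|$. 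Second, the stated bound $e^{-M/2}$ is far from sharp compared to your $Me^{-2M}$, which merely reflects that Dinh--Sibony did not need, and hence did not record, the stronger rate; nothing in your argument is lost by this slack.
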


\begin{lem}[See Lemma 2.3.10 in \cite{superpotentials}]\label{lem:2310} For every integer $l$ with $0\leq l\leq k-p+1$, we have $\abs{\int U'_M\wedge f_*(\omega)^l\wedge\omega^{k-p-l+1}}\lesssim e^{-\frac{1}{2}(10k^2)^{-l}d^{-kl}M}$.
\end{lem}
Notice that $e^{-\frac{1}{2}(10k^2)^{-l}d^{-kl}M}<e^{-\frac{1}{2}(10k^2)^{-k}d^{-k^2}M}<M^{-1}=t^\beta$ for a sufficiently small $t>0$ and for every integer $l$ with $0\leq l\leq k-p+1$.
\medskip

In the same way as in the proof of Lemma ~\ref{lem:estimate_off_E}, we obtain the following lemma.
\begin{lem}\label{lem:estimate_off_E_omega}
Let $l$ be an integer such that $0\leq l\leq k-p+1$. Let $s, t$ be two positive real numbers as in Lemma ~\ref{lem:estimate_off_E}. Then,
\begin{displaymath}
\abs{\int_{V_t\setminus({E_s}\cup V)}\omega^{k-l}\wedge f_*(\omega)^l}\lesssim t^{\beta_l}s^{-2-N_4},
\end{displaymath}
where $\beta_l=(20k^2(\delta+1/2))^{-l}\delta^{-l}$.
\end{lem}

Also, concerning $R$ and $U$ in Proposition \ref{prop:estimate_off_E}, the proofs of Lemma \ref{lem:estimate_off_E} and Lemma \ref{lem:2310} use only the mass of $U_M$ and the DSH norm of $\eta_M$. Note that $\norm{\tau_\zeta}_{\cali{C}^2}$ is uniformly bounded for $\norm{\zeta}_A<1$. So, we can apply the same proofs of Lemma \ref{lem:estimate_off_E} and Lemma \ref{lem:2310} to $(\tau_\zeta)_*(U_M)$ and $(\tau_\zeta)_*(U_M')$, respectively for $\norm{\zeta}_A<1$. Lemma \ref{lem:237} and Lemma \ref{lem:239} are still true when $U_M$ and $U_M'$ are replaced by $(\tau_\zeta)_*(U_M)$ and $(\tau_\zeta)_*(U_M')$, respectively for $\norm{\zeta}_A<1$ as well. Hence, we obtain the following:
\begin{prop}\label{prop:estimate_off_EAut}
Let $s, t$ be two positive real numbers as in Proposition \ref{prop:estimate_off_E}. Let $R\in\cali{C}_p$ and $U$ the Green quasi-potential of a current $R$. Then, we have:
\begin{displaymath}
\abs{\int_{V_t\setminus({E_s}\cup V)}(\tau_\zeta)_*(U)\wedge f_*(\omega)^{k-p+1}}\leq c_Vt^\beta s^{-2-N_4},
\end{displaymath}
where $N_4$ is the constant in Corollary ~\ref{cor:lojasiewicz}, $\beta:=(20k^2(\delta+1/2))^{-k}\delta^{-k+p-1}$, and $c_V>0$ is a constant independent of $s$, $t$, $R$ and $\eps$, $i$, $n$ in $s_{n,i}$, $t_{n,i}$ and $\zeta$ with $\norm{\zeta}_A<1$.
\end{prop}



Now we prove the first one of our claims for the proof of Proposiiton \ref{prop:mainthm2} using Proposition \ref{prop:estimate_off_E}. The point is that $t_{n, i}\ll s_{n, i}$ and $t_{n, i}\ll \eps_{n, i-1}$. In the statement and the proof of Lemma \ref{lem:1st_estimate}, the inequalities $\lesssim$ and $\gtrsim$ are up to a constant multiple independent of $\eps, i, n$ and $R$.
\begin{lem}\label{lem:1st_estimate}Let $\eps>0$ be sufficiently small. Then, 
\begin{displaymath}
\agl{V_{n-i}', \Lambda(R_{n,i-1})-R_{n,i}}_{W_{n,i}\setminus V}\gtrsim-\eps^{ni}
\end{displaymath}
\end{lem}

\begin{proof}[The proof of Lemma \ref{lem:1st_estimate}]
Since $V_{n-i}'$ is negative, $\agl{V_{n-i}', \Lambda(R_{n,i-1})-R_{n,i}}_{W_{n,i}\setminus V}\geq\agl{V_{n-i}', \Lambda(R_{n,i-1})}_{W_{n,i}\setminus V}$. Thus, it suffices to prove $\abs{\agl{V_{n-i}', \Lambda(R_{n,i-1})}_{W_{n,i}\setminus V}}\lesssim\eps^{ni}$.

\begin{eqnarray*}
\abs{\agl{V_{n-i}', \Lambda(R_{n,i-1})}_{W_{n,i}\setminus V}}&\lesssim&\norm{R_{n, i-1}}_\infty\abs{\agl{V_{n-i}', \Lambda(\omega^{k-p+1})}_{W_{n,i}\setminus V}}\\
&\lesssim&\norm{R_{n, i-1}}_\infty\abs{\agl{V_{n-i}', f_*(\omega)^{k-p+1}}_{W_{n,i}\setminus V}}\\
&\lesssim&{\eps_{n,i-1}}^{-4k^2}{t_{n,i}}^\beta {s_{n,i}}^{-2-N_4}\\
&\lesssim&\eps^{n[C(2+N_4)(40k^2\delta)^{6k(i-1)}(\frac{(40k^2\delta)^{6k}}{10k(40k^2\delta)^k\delta^k}-4k^2)-i]}\lesssim\eps^{ni},
\end{eqnarray*}
where $C>1$ is in Lemma ~\ref{lem:lojasiewicz}. In the above, the second inequality is from the current inequality $f_*(\omega^p)\leq (f_*(\omega))^p$ and the third inequality from Proposition ~\ref{prop:216}, Proposition ~\ref{prop:estimate_off_E}, and Lemma ~\ref{lem:estimate_off_E_omega}.
\end{proof}

\section{The Proof of $\agl{V_{n-i}', \Lambda(R_{n,i-1})-R_{n,i}}_{W_{n,i}'}\gtrsim-\eps^{ni}$}\label{sec:3}
We prove the second one of our claims for the proof of Proposition \ref{prop:mainthm2} in the same way as in Lemma 5.4.7 in ~\cite{superpotentials}. The following is from Lemma 5.4.7 in \cite{superpotentials}.

\begin{lem}\label{lem:Calpha_estimate_Wi2}
Let $\alpha\geq 0$. For all sufficiently small $t>0$, we have
\begin{displaymath}
\norm{\Lambda(R)}_{\cali{C}^\alpha({V_{t}}^c)}\lesssim\norm{R}_{\cali{C}^\alpha}t^{-(4+\alpha)k},
\end{displaymath}
for any smooth form $R$ of bidegree $(p, p)$ with $0\leq p\leq k$. The inequality is up to a constant multiple independent of $t$ and $R$.
\end{lem}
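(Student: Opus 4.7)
The plan is to decompose $\Lambda(R)=d^{-(p-1)}f_*(R)$ on $V_t^c$ as a sum over the $d^k$ holomorphic local inverse branches of $f$, and then estimate each branch via Cramer's rule combined with a Lojasiewicz-type lower bound on $\abs{J_f}$. For $z\in V_t^c$, every $w\in f^{-1}(z)$ lies outside the critical set $C_f$ of $f$ (otherwise $z=f(w)\in V$), and Lemma \ref{lem:3.1} moreover gives $dist(w,C_f)\gtrsim t$. Hence on a suitable neighborhood of $z$ one has disjoint holomorphic local inverses $g_1,\ldots,g_{d^k}$ of $f$ with $f_*(R)(z)=\sum_j g_j^*(R)(z)$ there. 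Since $\abs{J_f}$ is smooth on $\proj^k$ and vanishes to finite order along $C_f$, a standard global Lojasiewicz inequality yields constants $c_0>0$ and $N_0\in\mathbb{N}$ with $\abs{J_f(w)}\geq c_0\, dist(w,C_f)^{N_0}$ throughout $\proj^k$, so $\abs{J_f\circ g_j}\gtrsim t^{N_0}$ on $V_t^c$.

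Next I would control $\norm{g_j}_{\cali{C}^{[\alpha]+1}(V_t^c)}$. The inverse function theorem gives $Dg_j=(Df)^{-1}\circ g_j$, and Cramer's rule expresses the entries of $(Df)^{-1}$ as polynomials in the entries of $Df$ divided by $J_f$. Iterated chain rule differentiation then yields, for each integer $\ell$ with $\ell\leq[\alpha]+1$,
\[\norm{g_j}_{\cali{C}^\ell(V_t^c)}\;\lesssim\;\frac{P_\ell(\norm{f}_{\cali{C}^\ell})}{\bigl(\inf_{V_t^c}\abs{J_f\circ g_j}\bigr)^{M_\ell}}\;\lesssim\;t^{-N_0 M_\ell},\]
with integers $M_\ell$ depending only on $k$ and $\ell$ and $P_\ell$ a polynomial. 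Since $R$ is a smooth $(p,p)$-form, the coefficients of $g_j^*(R)$ are polynomials in the coefficients of $R\circ g_j$ and the first derivatives of $g_j$, of total degree at most $2p\leq 2k$ in the latter. A standard composition estimate in non-integer H\"older spaces then gives
\[\norm{g_j^*(R)}_{\cali{C}^\alpha(V_t^c)}\;\lesssim\;\norm{R}_{\cali{C}^\alpha}\,\norm{g_j}_{\cali{C}^{[\alpha]+1}(V_t^c)}^{\,2p+\alpha}\;\lesssim\;\norm{R}_{\cali{C}^\alpha}\,t^{-(4+\alpha)k},\]
provided $N_0$ and the $M_\ell$ are chosen once and for all large enough. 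Summing over the $d^k$ branches and multiplying by $d^{-(p-1)}$ contributes only a bounded factor, so the claim follows.

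The genuine work is bookkeeping: one must track that the combined exponent coming from the Lojasiewicz exponent $N_0$, the Cramer-rule exponent $M_{[\alpha]+1}$, and the pullback factor $2p+\alpha\leq 2k+\alpha$ does not exceed $(4+\alpha)k$, uniformly in $p$ and $\alpha$. Everything else --- the inverse function theorem, Cramer's rule, composition in H\"older spaces, and absorption of $R$-independent constants --- is classical. Note in particular that closedness of $R$ plays no role here; only the bidegree $(p,p)$ and the smoothness of $R$ are used.
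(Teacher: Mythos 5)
Your approach takes a genuinely different route, but it has a gap that is not mere bookkeeping: it cannot produce the exponent $(4+\alpha)k$. The Lojasiewicz exponent $N_0$ in $\abs{J_f(w)}\geq c_0\,dist(w,C_f)^{N_0}$ is at least the maximal order of vanishing of $J_f$ along the critical set; for a degree-$d$ endomorphism this is typically of order $d-1$ or larger, and it is determined by $f$, not a parameter you get to make small (your phrase ``provided $N_0$ and the $M_\ell$ are chosen once and for all large enough'' runs in the wrong direction --- you need them small). Your Cramer/composition estimate then yields $\norm{g_j^*R}_{\cali{C}^\alpha(V_t^c)}\lesssim\norm{R}_{\cali{C}^\alpha}t^{-N_0 M_{[\alpha]+1}(2p+\alpha)}$; for this to sit under $t^{-(4+\alpha)k}$ one would need roughly $N_0\leq 2$, which fails already for simple examples. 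The exponent is not cosmetic here: in Section 8 the factor $t_{n,i}^{-4k}$ is absorbed by $\eps_{n,i}$ precisely because $4k<10k$ under the choice $t_{n,i}=\eps_{n,i}^{(10k)^{-1}}$; a Lojasiewicz-sized exponent would not be absorbed and the chain of estimates would collapse.

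The paper's proof sidesteps the Jacobian entirely. Because $\proj^k$ is compact, the local inverse branches $g_j:B_{t/2}(a)\to\proj^k$ are, in a fixed finite atlas, uniformly bounded holomorphic maps, with bound independent of $t$ and $a$. The Cauchy integral formula on the concentric ball $B_{t/4}(a)$ then gives $\abs{D^n g_j}\lesssim t^{-n}$ directly --- no Lojasiewicz exponent, no Cramer's rule, and the power of $t$ is exactly the order of differentiation. Feeding this into the pullback formula (a $(p,p)$-form picks up at most $2p\leq 2k$ first derivatives of $g_j$, plus $\alpha$ more from the H\"{o}lder seminorm) yields an exponent of at most $2k+2+\alpha\leq(4+\alpha)k$ for $k\geq 1$, as required. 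The idea your proof is missing is to derive the derivative bounds on $g_j$ from uniform boundedness of the branches combined with a Cauchy estimate on a ball of radius comparable to $t$, rather than from a pointwise lower bound on $\abs{J_f}$.
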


\begin{proof}
First, we fix a finite atlas for $\proj^k$. 
%
%
Without loss of generality, we may assume that $t>0$ is sufficiently small.
Consider $a\in {V_t}^c$. We denote by $B_a(t/2)$ the ball centered at $a\in V_t^c$ and of radius $t/2$. Then, there exist $d^k$ injective holomorphic maps $g_j:B_a(t/2)\to\proj^k$ such that $f\circ g_j=\id$ on $B_a(t/2)$. 
Such $g_j$'s are uniformly bounded by a uniform constant with respect to the finite atlas.
In particular, the constant is independent of $t$ and $a$. From the Cauchy-integral formula, we know that all the derivatives of order $n$ of $g_j$ on $B_{t/4}$ are $\lesssim t^{-n}$. On $B_a(t/2)$, we have
\begin{displaymath}
\Lambda(R)=d^{-p+1}\sum^{d^k}_{j=1}g_j^*(R).
\end{displaymath}
For fixed local real coordinates $(x_1, ..., x_{2k})$, $R$ is a combination with smooth coefficients of $dx_{j_1}\wedge\cdots\wedge dx_{j_{2k-2p+2}}$. Hence, the estimate on the derivatives of $g_j$ implies that
\begin{displaymath}
\norm{g_j^*(R)}_{\cali{C}^\alpha(B_{t/4})}\lesssim\norm{R}_{\cali{C}^\alpha}t^{-2k+2p-2-\alpha}\lesssim\norm{R}_{\cali{C}^\alpha}t^{-(4+\alpha)k}.
\end{displaymath}
It follows that
\begin{displaymath}
\norm{\Lambda(R)}_{\cali{C}^\alpha({V_{(3/4)t}}^c)}\lesssim\norm{R}_{\cali{C}^\alpha}t^{-(4+\alpha)k}.
\end{displaymath}
This implies the lemma.
\end{proof}

We prove the second one of our claims for the proof of Proposition \ref{prop:mainthm2}. In the following lemma and its proof, the inequalities $\lesssim$ and $\gtrsim$ are up to a constant multiple independent of $\eps, i, n$ and $R$.
\begin{lem}\label{lem:3rd_estimate} Let $\eps>0$ be sufficiently small. Then,
\begin{displaymath}
\abs{\langle V_{n-i}', \Lambda(R_{n,i-1})-R_{n,i}\rangle_{W'_{n,i}}} \lesssim \eps^{ni}
\end{displaymath}
\end{lem}
\begin{proof}

Let $\tau_{{\eps_{n,i}}\zeta}$ be an automorphism in $\Aut(\proj^k)$ such that $\norm{\zeta}_A<1$. Since $(t_{n,i})^{10k}=\eps_{n,i}$, we may assume that $\tau_{{\eps_{n,i}}\zeta}(W_{n,i}')\subseteq {V_{{t_{n,i}}/2}}^c$ for all $\zeta$ with $\norm{\zeta}_A<1$ and for all sufficiently small $\eps>0$. With Lemma ~\ref{lem:Calpha_estimate_Wi2} for $\alpha=1$ and Lemma ~\ref{lem:218}, we have 
\begin{eqnarray*}
&&\norm{(\tau_{{\eps_{n,i}}\zeta})_*(\Lambda(R_{n,i-1}))-\Lambda(R_{n,i-1})}_{\infty,W_{n,i}'}\\
&&\lesssim\norm{\Lambda(R_{n,i-1})}_{\cali{C}^1({V_{{t_{n,i}}/2}}^c)}\dist(\tau_{{\eps_{n,i}}\zeta}, \id)
\lesssim\norm{R_{n,i-1}}_{\cali{C}^1}{t_{n,i}}^{-5k}{\eps_{n,i}}.
\end{eqnarray*}

From Proposition ~\ref{prop:216}, $\norm{R_{n,i-1}}_{{\C}^1}\lesssim {\eps_{n,i-1}}^{-2k^2-4k-1} \lesssim {\eps_{n,i-1}}^{-5k^2}$. So, we have $\norm{\Lambda(R_{n,i-1})-R_{n,i}}_{\infty,W_{n,i}'}\lesssim {t_{n,i}}^{-5k}{\eps_{n,i}}{\eps_{n,i-1}}^{-5k^2}$. Since $V_{n-i}'$ is negative and has bounded mass independent of $n$ and $i$, we have
\begin{displaymath}
\abs{\langle V_{n-i}', \Lambda(R_{n,i-1})-R_{n,i}\rangle_{W'_{n,i}}} \lesssim {t_{n,i}}^{-5k}{\eps_{n,i}}{\eps_{n,i-1}}^{-5k^2}={\eps_{n,i}}^\frac{1}{2}{\eps_{n,i-1}}^{-5k^2}\leq \eps^{ni}.
\end{displaymath}
\end{proof}

\section{The Proof of $\agl{V'_{n-i}, \Lambda(R_{n,i-1})-R_{n,i}}_{W_{n,i}''\setminus V}\gtrsim-\eps^{ni}$}\label{sec:2}
In this section, we prove Lemma \ref{lem:2nd_estimate} below, the third one of our claims for the proof of Proposition \ref{prop:mainthm2}. In the rest of the paper, the inequalities $\lesssim$ and $\gtrsim$ are up to a constant multiple independent of $\eps, i, n$ and $R$.

\begin{lem}\label{lem:2nd_estimate} Let $\eps>0$ be sufficiently small. Then,
\begin{displaymath}
\abs{\agl{V_{n-i}',\Lambda(R_{n,i-1})-R_{n,i}}_{W_{n,i}''\setminus V}}\lesssim(1+\norm{S}_{\infty, E_{\eps}})\eps^{ni}
\end{displaymath}
\end{lem}

For this estimate, we will use change of coordinates and Fubini's theorem to find an upper bound for $\abs{\agl{V_{n-i}', \Lambda(R_{n,i-1})-R_{n,i}}_{W_{n,i}''\setminus V}}$ in terms of the mass of $\Lambda(R_{n,i-1})$ and the pointwise difference of a smooth form and its regularization by automorphisms.

\begin{proof}
In order to handle boundary of $W_{n,i}''$, we consider three neighborhoods of $E$. Notice that $s_{n,i}\gg\eps_{n,i}$. Let $C_\Aut>0$ be a constant such that for $x\in\proj^k$, for $\zeta$ with $\norm{\zeta}_A<1$ and for all sufficiently small $\epsilon>0$, $\dist(\tau_{\epsilon \zeta}(x),x)< C_{\Aut}\epsilon$ is true. Define
\begin{itemize}
\item $W_{n,i,0}'':=$ the $(s_{n,i}-3C_\Aut\eps_{n,i})$-neighborhood of $E$,

\item $W_{n,i,1}'':=$ the $(s_{n,i}-C_\Aut\eps_{n,i})$-neighborhood of $E$, and

\item $W_{n,i,2}':=$ the $(s_{n,i}+C_\Aut\eps_{n,i})$-neighborhood of $E$.

\end{itemize}

We estimate $\agl{V_{n-i}',\Lambda(R_{n,i-1})-R_{n,i}}_{W_{n,i}''\setminus V}$. Note that $V_{n-i}'$ is smooth in $W_{n,i,2}''$.
\begin{eqnarray*}
&&\agl{V_{n-i}',\Lambda(R_{n,i-1})-R_{n,i}}_{W_{n,i}''\setminus V}=\agl{V_{n-i}',\Lambda(R_{n,i-1})-R_{n,i}}_{W_{n,i}''}\\
&&=\int_{W_{n,i}''}V_{n-i}'\wedge\Lambda(R_{n,i-1})-\int_{W_{n,i}''}\int_{\set{\norm{\zeta}_A<1}}V_{n-i}'\wedge(\tau_{\eps_{n,i}\zeta})^*\Lambda(R_{n,i-1})d\rho(\zeta).
\end{eqnarray*}

Since $\Lambda(R_{n,i-1})$ has $\cali{L}^1$-coefficients, we use Fubini's Theorem:
\begin{equation*}
=\int_{W_{n,i}''}V_{n-i}'\wedge\Lambda(R_{n,i-1})-\int_{\set{\norm{\zeta}_A<1}}\int_{W_{n,i}''}V_{n-i}'\wedge(\tau_{\eps_{n,i}\zeta})^*\Lambda(R_{n,i-1})d\rho(\zeta).
\end{equation*}

Since each $\tau_{\eps_{n,i}\zeta}$ is an automorphism, by use of change of coordinates, we have
\begin{displaymath}
=\int_{W_{n,i}''}V_{n-i}'\wedge\Lambda(R_{n,i-1})-\int_{\set{\norm{\zeta}_A<1}}\int_{\tau_{\eps_{n,i}\zeta}(W_{n,i}'')}(\tau_{\eps_{n,i}\zeta})_*(V_{n-i}')\wedge\Lambda(R_{n,i-1})d\rho(\zeta).\\
\end{displaymath}

We have $W_{n,i,1}''\subseteq W_{n,i}''$ and $W_{n,i,1}''\subseteq \tau_{\eps_{n,i}\zeta}(W_{n,i}'')$ for all $\zeta$ with $\norm{\zeta}_A<1$. Then, the last integral can be written as the following sum:

\begin{eqnarray}
&&\label{eqn:1st}\int_{W_{n,i,1}''}V_{n-i}'\wedge\Lambda(R_{n,i-1})-\int_{\set{\norm{\zeta}_A<1}}\int_{W_{n,i,1}''}(\tau_{\eps_{n,i}\zeta})_*(V_{n-i}')\wedge\Lambda(R_{n,i-1})d\rho(\zeta)\\
&&\label{eqn:2nd}\hspace{.5in}+\int_{W_{n,i}''\setminus W_{n,i,1}''}V_{n-i}'\wedge\Lambda(R_{n,i-1})\\
&&\label{eqn:3rd}\hspace{.5in}-\int_{\set{\norm{\zeta}_A<1}}\int_{\tau_{\eps_{n,i}\zeta}(W_{n,i}'')\setminus W_{n,i,1}''}(\tau_{\eps_{n,i}\zeta})_*(V_{n-i}')\wedge \Lambda(R_{n,i-1})d\rho(\zeta).
\end{eqnarray}

For these computations, we need the following lemma.
\begin{lem}\label{lem:derivativeWi1}
Let $i,n\in\mathbb{N}$ be integers such that $0\leq i\leq n$ and $\eps>0$ sufficiently small. Then,
\begin{displaymath}
\norm{V_{n-i}'}_{\cali{C}^1(W_{n,i,2}'')}\lesssim {s_{n,i}}^{-2k}(1+\norm{f}_{\cali{C}^1})^{p(n-i)}(1+\norm{S}_{\infty, E_{\eps}}).
\end{displaymath}
\end{lem}

\begin{proof}
From our settings in Section \ref{sec:settings}, $L^{n-i}(S)$ is smooth over $\frac{1}{2\sqrt{\eps}}W_{n,i}''$. Observe that in this estimate, we do not require the closedness of $L^{n-i}(S)$. Consider a cut-off function $\chi^{E_{\frac{1}{4\sqrt{\eps}}s_{n,i}},s_{n,i}}$ in Lemma \ref{lem:smoothing}, denoted by $\chi$ in this proof. Write $L^{n-i}(S)=\chi L^{n-i}(S)+(1-\chi)L^{n-i}(S)$. Then, $\chi L^{n-i}(S)$ is a positive smooth $(p,p)$-form on $\proj^k$, $(1-\chi) L^{n-i}(S)$ is a positive $(p,p)$-current with $\supp((1-\chi)L^{n-i}(S))\cap E_{\frac{1}{4\sqrt{\eps}}s_{n,i}}=\emptyset$ and
\begin{displaymath}
U_{n-i}=\int_{\xi\neq z}[\chi L^{n-i}(S)](\xi)\wedge K(z, \xi)+\int_{\xi\neq z}[(1-\chi) L^{n-i}(S)](\xi)\wedge K(z, \xi).
\end{displaymath}

As in the proof of Lemma 2.3.5 in \cite{superpotentials}, the first term is smooth and we have:
\begin{eqnarray*}
&&\norm{\int_{\xi\neq z}[\chi L^{n-i}(S)](\xi)\wedge K(z, \xi)}_{\cali{C}^1}\\
&&\lesssim\norm{[\chi L^{n-i}(S)](\xi)}_\infty\cdot\norm{\int_{\xi\neq z}\omega^p(\xi)\wedge K(z, \xi)}_{\cali{C}^1}\lesssim{\norm{f}_{\cali{C}^1}}^{p(n-i)}\norm{S}_{\infty, E_\eps}.
\end{eqnarray*}

We consider the other part: $\int_{\xi\neq z}[(1-\chi) L^{n-i}(S)](\xi)\wedge K(z, \xi)$. For notational convenience, we denote $(1-\chi) L^{n-i}(S)$ by $S_{n-i}$ in the rest of the proof. From our settings, we have that $\dist(E, \supp(S_{n-i}))\geq\frac{1}{4\sqrt{\eps}}s_{n,i}$. So, consider $z\in W_{n,i,2}''$. Then, $\dist(z, {O_{n-i}}^c)\geq(\frac{1}{4\sqrt{\eps}}-1-C_\Aut)s_{n,i}$, and
\begin{equation}\label{eqn:1-chi}
\int_{\xi\neq z}S_{n-i}(\xi)\wedge K(z, \xi)=\int_{\xi\not\in B_z((\frac{1}{4\sqrt{\eps}}-1-C_\Aut)s_{n,i})}S_{n-i}(\xi)\wedge K(z, \xi),
\end{equation}
where $B_z((\frac{1}{4\sqrt{\eps}}-1-C_\Aut)s_{n,i})$ is the ball centered at $z$ and of radius $(\frac{1}{4\sqrt{\eps}}-1-C_\Aut)s_{n,i}$. 
For each $\xi\not\in B_z((\frac{1}{4\sqrt{\eps}}-1-C_\Aut)s_{n,i})$, $K(z, \xi)$ is smooth. 
Hence, $\int_{\xi\neq z}S_{n-i}(\xi)\wedge K(z, \xi)$ is smooth.

We estimate the $\cali{C}_1$-norm of $\int_{\xi\neq z}S_{n-i}(\xi)\wedge K(z, \xi)$ in $W_{n,i,2}''$. 
Since $\nabla K$ has singularities of order $(z-\xi)^{1-2k}$ from Proposition \ref{prop:kernel} and the mass of $S_{n-i}$ is bounded by the mass of $L^{n-i}(S)$, which equals $1$, we have
\begin{displaymath}
\norm{\int_{\xi\neq z}S_{n-i}(\xi)\wedge K(z, \xi)}_{\cali{C}^1}\lesssim((\frac{1}{4\sqrt{\eps}}-1-C_\Aut)s_{n,i})^{1-2k}\leq s_{n,i}^{-2k},
\end{displaymath}
from Equality \ref{eqn:1-chi}.

Since $T^{k-p+1}$ has H\"{o}lder continuous super-potentials, $c_{n-i}$ such that $V_{n-i}'=U_{n-i}-c_{n-i}\omega^{p-1}$ is uniformly bounded with respect to $i, n$. Hence, the above estimate and this relationship prove the lemma.
\end{proof}
  
\subsection*{The Computation of ~\ref{eqn:1st}}
From Lemma ~\ref{lem:derivativeWi1}, $V_{n-i}'$ is smooth over $W_{n,i}''$. Then,
\begin{eqnarray*}
&&\abs{~\ref{eqn:1st}}=\abs{\int_{\set{\norm{\zeta}_A<1}}\int_{W_{n,i,1}''}(V_{n-i}'-(\tau_{\eps_{n,i}\zeta})_*(V_{n-i}'))\wedge\Lambda(R_{n,i-1})d\rho(\zeta)}\\
&& \lesssim
\norm{V_{n-i}'-(\tau_{\eps_{n,i}\zeta})_*(V_{n-i}')}_{\infty, W_{n,i,1}''}\int_{\set{\norm{\zeta}_A<1}}\int_{W_{n,i,1}''}\omega^{p-1}\wedge\Lambda(R_{n,i-1})d\rho(\zeta)\\
&&\,\,\,\,\, \lesssim C_\Aut\norm{V_{n-i}'}_{\cali{C}^1(W_{n,i,1}'')}\eps_{n,i}\int_{\set{\norm{\zeta}_A<1}}\int_{W_{n,i,1}''}\omega^{p-1}\wedge\Lambda(R_{n,i-1})d\rho(\zeta)\\
&&\,\,\,\,\,\lesssim C_\Aut\norm{V_{n-i}'}_{\cali{C}^1(W_{n,i,1}'')}\eps_{n,i}\norm{\Lambda(R_{n,i-1})}=C_\Aut\norm{V_{n-i}'}_{\C^1(W_{n,i,1}'')}\eps_{n,i}.
\end{eqnarray*}
From Lemma ~\ref{lem:derivativeWi1},
we have
\begin{displaymath}
\abs{~\ref{eqn:1st}}\lesssim {s_{n,i}}^{-2k}(1+\norm{f}_{\cali{C}^1})^{p(n-i)}(1+\norm{S}_{\infty, E_{\eps}})\eps_{n,i}.
\end{displaymath}

\subsection*{The Computation of ~\ref{eqn:2nd}}
We split the integral into two integrals as follows:
\begin{eqnarray*}
&&\int_{W_{n,i}''\setminus W_{n,i,1}''}V_{n-i}'\wedge\Lambda(R_{n,i-1})\\
&&=\int_{(W_{n,i}''\setminus W_{n,i,1}'')\cap V_{t_{n,i}}}V_{n-i}'\wedge\Lambda(R_{n,i-1})+\int_{(W_{n,i}''\setminus W_{n,i,1}'')\setminus V_{t_{n,i}}}V_{n-i}'\wedge\Lambda(R_{n,i-1}).
\end{eqnarray*}
We consider the first integral. Since $V_{n-i}'$ is negative and $\Lambda(R_{n,i-1})$ is positive,
\begin{eqnarray*}
&&0\geq\int_{(W_{n,i}''\setminus W_{n,i,1}'')\cap V_{t_{n,i}}}V_{n-i}'\wedge\Lambda(R_{n,i-1})\geq\int_{V_{t_{n,i}}\setminus W_{n,i,1}''}V_{n-i}'\wedge\Lambda(R_{n,i-1})\\
&&=\int_{V_{t_{n,i}}\setminus W_{n,i,1}''}U_{n-i}\wedge\Lambda(R_{n,i-1})-\int_{V_{t_{n,i}}\setminus W_{n,i,1}''}c_{n-i}\omega^{k-p}\wedge\Lambda(R_{n,i-1}).
\end{eqnarray*}
Since $f_*(\omega^p)\leq (f_*(\omega))^p$ in the current sense, the above is:
\begin{eqnarray*}
&&\gtrsim\norm{R_{n,i-1}}_\infty\left(\int_{V_{t_{n,i}}\setminus W_{n,i,1}''}U_{n-i}\wedge\Lambda(\omega^p)-\int_{V_{t_{n,i}}\setminus W_{n,i,1}''}\abs{c_{n-i}}\omega^{k-p}\wedge\Lambda(\omega^p)\right)\\
&&\gtrsim\norm{R_{n,i-1}}_\infty\left(\int_{V_{t_{n,i}}\setminus W_{n,i,1}''}U_{n-i}\wedge f_*(\omega)^p
-\int_{V_{t_{n,i}}\setminus W_{n,i,1}''}\abs{c_{n-i}}\omega^{k-p}\wedge f_*(\omega)^p\right)\\
&&\gtrsim-\norm{R_{n,i-1}}_\infty {t_{n,i}}^\beta {s_{n,i}}^{-2-N_4}\gtrsim-{\eps_{n,i-1}}^{-4k^2}{t_{n,i}}^\beta {s_{n,i}}^{-2-N_4}.
\end{eqnarray*}
The second last inequality is due to Proposition ~\ref{prop:estimate_off_E} and Lemma ~\ref{lem:estimate_off_E_omega} together with $c_{n-i}$ uniformly bounded. The estimate of $\norm{R_{n,i-1}}_\infty$ in the last inequality is from Proposition ~\ref{prop:216}.

We estimate the second integral. Since $(W_{n,i}''\setminus W_{n,i,1}'')\setminus V_{t_{n,i}}\subset W_{n,i}''\cap {V_{t_{n,i}}}^c$, from Lemma ~\ref{lem:derivativeWi1} and Lemma ~\ref{lem:Calpha_estimate_Wi2}, we have
\begin{eqnarray*}
&&\abs{\int_{(W_{n,i}''\setminus W_{n,i,1}'')\setminus V_{t_{n,i}}}V_{n-i}'\wedge\Lambda(R_{n,i-1})}\\
&&\lesssim\norm{V_{n-i}'}_{\infty, W_{n,i}''}\norm{\Lambda(R_{n,i-1})}_{\infty, {V_{t_{n,i}}}^c}\cdot\textrm{ the volume of }(W_{n,i}''\setminus W_{n,i,1}'')\setminus V_{t_{n,i}}\\
&&\lesssim {s_{n,i}}^{-2k}(1+\norm{f}_{\cali{C}^1})^{p(n-i)}(1+\norm{S}_{\infty, E_{\eps}}){\eps_{n,i-1}}^{-4k^2}{t_{n,i}}^{-4k}\eps_{n,i}
\end{eqnarray*}
From the first and the second integrals, since ${t_{n,i}}^{-4k}\eps_{n,i}\leq t^\beta$, we have
\begin{eqnarray*}
\abs{~\ref{eqn:2nd}}
\lesssim {s_{n,i}}^{-2k(2+N_4)}(1+\norm{f}_{\cali{C}^1})^{p(n-i)}(1+\norm{S}_{\infty, E_{\eps}}){\eps_{n,i-1}}^{-4k^2}{t_{n,i}}^\beta.
\end{eqnarray*}

\subsection*{The Computation of ~\ref{eqn:3rd}}
We consider
\begin{displaymath}
\int_{\set{\norm{\zeta}_A<1}}\int_{\tau_{\eps_{n,i}\zeta}(W_{n,i}'')\setminus W_{n,i,1}''}(\tau_{\eps_{n,i}\zeta})_*(V_{n-i}')\wedge \Lambda(R_{n,i-1})d\rho(\zeta).
\end{displaymath}
As in the computation of ~\ref{eqn:2nd}, we split the integral into two parts:
\begin{eqnarray*}
&&=\int_{\set{\norm{\zeta}_A<1}}\int_{(\tau_{\eps_{n,i}\zeta}(W_{n,i}'')\setminus W_{n,i,1}'')\cap V_{t_{n,i}}}(\tau_{\eps_{n,i}\zeta})_*(V_{n-i}')\wedge \Lambda(R_{n,i-1})d\rho(\zeta)\\
&&\hspace{.5in}+\int_{\set{\norm{\zeta}_A<1}}\int_{(\tau_{\eps_{n,i}\zeta}(W_{n,i}'')\setminus W_{n,i,1}'')\setminus V_{t_{n,i}}}(\tau_{\eps_{n,i}\zeta})_*(V_{n-i}')\wedge \Lambda(R_{n,i-1})d\rho(\zeta).
\end{eqnarray*}

We compute the first integral. Since $V_{n-i}'$ is negative and $\Lambda(R_{n,i-1})$ is positive,
\begin{eqnarray*}
&&0\geq\int_{\set{\norm{\zeta}_A<1}}\int_{(\tau_{\eps_{n,i}\zeta}(W_{n,i}'')\setminus W_{n,i,1}'')\cap V_{t_{n,i}}}(\tau_{\eps_{n,i}\zeta})_*(V_{n-i}')\wedge\Lambda(R_{n,i-1})d\rho(\zeta)\\
&&\geq\int_{\set{\norm{\zeta}_A<1}}\int_{V_{t_{n,i}}\setminus W_{n,i,1}''}(\tau_{\eps_{n,i}\zeta})_*(V_{n-i}')\wedge\Lambda(R_{n,i-1})d\rho(\zeta)\\
&&=\int_{\set{\norm{\zeta}_A<1}}\int_{V_{t_{n,i}}\setminus W_{n,i,1}''}(\tau_{\eps_{n,i}\zeta})_*(U_{n-i})\wedge\Lambda(R_{n,i-1})d\rho(\zeta)\\
&&\hspace{.5in}-\int_{\set{\norm{\zeta}_A<1}}\int_{V_{t_{n,i}}\setminus W_{n,i,1}''}(\tau_{\eps_{n,i}\zeta})_*(c_{n-i}\omega^{k-p})\wedge\Lambda(R_{n,i-1})d\rho(\zeta).
\end{eqnarray*}
Since $f_*(\omega^p)\leq (f_*(\omega))^p$ in the current sense, the above is:
\begin{eqnarray*}
&&\gtrsim\norm{R_{n,i-1}}_\infty\left(\int_{\set{\norm{\zeta}_A<1}}\int_{V_{t_{n,i}}\setminus W_{n,i,1}''}(\tau_{\eps_{n,i}\zeta})_*(U_{n-i})\wedge\Lambda(\omega^p)d\rho(\zeta)\right.\\
&&\hspace{.5in}\left.-\int_{\set{\norm{\zeta}_A<1}}\int_{V_{t_{n,i}}\setminus W_{n,i,1}''}(\tau_{\eps_{n,i}\zeta})_*(\abs{c_{n-i}}\omega^{k-p})\wedge\Lambda(\omega^p)d\rho(\zeta)\right)\\
&&\gtrsim\norm{R_{n,i-1}}_\infty\left(\int_{\set{\norm{\zeta}_A<1}}\int_{V_{t_{n,i}}\setminus W_{n,i,1}''}(\tau_{\eps_{n,i}\zeta})_*(U_{n-i})\wedge f_*(\omega)^pd\rho(\zeta)\right.\\
&&\hspace{.5in}\left.-\int_{\set{\norm{\zeta}_A<1}}\int_{V_{t_{n,i}}\setminus W_{n,i,1}''}(\tau_{\eps_{n,i}\zeta})_*(\abs{c_{n-i}}\omega^{k-p})\wedge f_*(\omega)^pd\rho(\zeta)\right)\\
&&\gtrsim-\norm{R_{n,i-1}}_\infty {t_{n,i}}^\beta {s_{n,i}}^{-2-N_4}\gtrsim-{\eps_{n,i-1}}^{-4k^2}{t_{n,i}}^\beta {s_{n,i}}^{-2-N_4}.
\end{eqnarray*}
The second last inequality is due to Proposition ~\ref{prop:estimate_off_EAut} and Lemma ~\ref{lem:estimate_off_E_omega} together with $c_{n-i}$ uniformly bounded. The estimate of $\norm{R_{n,i-1}}_\infty$ in the last inequality is from Proposition ~\ref{prop:216}.

We compute the second integral. Note that $V_{n-i}'$ is negative and $\Lambda(R_{n,i-1})$ is positive. From Lemma ~\ref{lem:218} and $V_{n-i}'$ being smooth, we have for the second integral
\begin{eqnarray*}
&&\abs{\int_{\set{\norm{\zeta}_A<1}}\int_{(\tau_{\eps_{n,i}\zeta}(W_{n,i}'')\setminus W_{n,i,1}'')\setminus V_{t_{n,i}}}(\tau_{\eps_{n,i}\zeta})_*(V_{n-i}')\wedge \Lambda(R_{n,i-1})d\rho(\zeta)}\\
&&\lesssim\int_{\set{\norm{\zeta}_A<1}}\norm{(\tau_{\eps_{n,i}\zeta})_*(V_{n-i}')}_{\infty, \tau_{\eps_{n,i}\zeta}(W_{n,i}'')}\\
&&\hspace{1in}\cdot\int_{(\tau_{\eps_{n,i}\zeta}(W_{n,i}'')\setminus W_{n,i,1}'')\setminus V_{t_{n,i}}}\omega^{k-p}\wedge \Lambda(R_{n,i-1})d\rho(\zeta)\\
&&\lesssim\norm{V_{n-i}'}_{\infty, W_{n,i,2}''}\norm{\Lambda(R_{n,i-1})}_{\infty, V_{t_{n,i}}^c}\cdot\textrm{ the volume of }(W_{n,i,2}''\setminus W_{n,i,1}'')\setminus V_{t_{n,i}}\\
&&\lesssim {s_{n,i}}^{-2k}(1+\norm{f}_{\cali{C}^1})^{p(n-i)}(1+\norm{S}_{\infty, E_{\eps}}){\eps_{n,i-1}}^{-4k^2}{t_{n,i}}^{-4k}\eps_{n,i}.
\end{eqnarray*}
In the second last inequality, we have $\tau_{\eps_{n,i}\zeta}(W_{n,i}'')\subseteq W_{n,i,2}''$ for all $\zeta$ with $\norm{\zeta}_A<1$; in the last inequality, the bound of $\norm{\Lambda(R_{n,i-1})}_{\infty, {V_{t_{n,i}}}^c}$ is from Lemma ~\ref{lem:Calpha_estimate_Wi2} and $\norm{V_{n-i}'}_{\infty, W_{n,i,2}''}$ can be computed in the same way as in Lemma ~\ref{lem:derivativeWi1}.

From the first and the second integrals, since ${t_{n,i}}^{-4k}\eps_{n,i}\leq t^\beta$, we have
\begin{eqnarray*}
\abs{~\ref{eqn:3rd}}
\lesssim {s_{n,i}}^{-2k(2+N_4)}(1+\norm{f}_{\cali{C}^1})^{p(n-i)}(1+\norm{S}_{\infty, E_{\eps}}){\eps_{n,i-1}}^{-4k^2}{t_{n,i}}^\beta.
\end{eqnarray*}

We finish the proof.
From the computations of ~\ref{eqn:1st}, ~\ref{eqn:2nd}, and ~\ref{eqn:3rd}, we have
\begin{eqnarray*}
&&\abs{\agl{V_{n-i}',\Lambda(R_{n,i-1})-R_{n,i}}_{W_{n,i}''\setminus V}}\\
&&\lesssim {s_{n,i}}^{-2k(2+N_4)}(1+\norm{f}_{\cali{C}^1})^{p(n-i)}(1+\norm{S}_{\infty, E_{\eps}}){\eps_{n,i-1}}^{-4k^2}{t_{n,i}}^\beta\\
&&\leq(1+\norm{f}_{\cali{C}^1})^{p(n-i)}(1+\norm{S}_{\infty, E_{\eps}})\\
&&\hspace{.5in}\cdot\eps^{-2nik(2+N_4)}\eps^{-4k^2nC(2+N_4)(40k^2\delta)^{6k(i-1)}}\eps^{nC(2+N_4)\frac{(40k^2\delta)^{6ki}}{10k(40k^2\delta^2)^k}}\\
&&\leq(1+\norm{f}_{\cali{C}^1})^{p(n-i)}(1+\norm{S}_{\infty, E_{\eps}})\eps^{n(2+N_4)[C(40k^2\delta)^{6k(i-1)}(\frac{(40k^2\delta)^{6k}}{10k(40k^2\delta)^k\delta^k}-4k^2)-2ik]}\\
&&\leq (1+\norm{S}_{\infty, E_{\eps}})\eps^{ni}
\end{eqnarray*}
since $\eps>0$ is sufficiently small.
\end{proof}

\section{The Proof of $d^{-n}\V_S(R_{n,n})\gtrsim nd^{-\frac{n}{4}}\log\eps$}\label{sec:4}
In this section, we prove our last claim for the proof of Proposition \ref{prop:mainthm2}. We estimate $d^{-n}\cali{V}_S(R_{n,n})$ in the same way as Lemma 5.4.11 in ~\cite{superpotentials}.

\begin{prop}[See Lemma 3.2.10 in ~\cite{superpotentials}]\label{prop:3210}
Let $W\subset\proj^k$ be an open subset and $K\subset W$ be a compact set. Let $S$ be a positive closed $(p, p)$-current of mass $1$ with support in $K$ and $R$ be a current in $\cali{C}_{k-p+1}$. Assume that the restriction of $R$ to $W$ is a bounded form. Then the super-potential $\cali{U}_S$ of mean $0$ of $S$ satisfies
\begin{displaymath}
\abs{\cali{U}_S(R)}\leq c_\mathrm{bdd}(1+\log^+\norm{R}_{\infty, W}),
\end{displaymath}
where $c_\mathrm{bdd}>0$ is a constant independent of $S$ and $R$, and $\log^+:=\max\set{\log, 0}$.
\end{prop}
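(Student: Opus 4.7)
The plan is to reduce the estimate to a uniform bound on the Green quasi-potential $U_R$ of $R$ over the support of $S$. By the construction of super-potentials and Remark~\ref{rem:Greenqpotential}, one has $\cali{U}_S(R) = \langle S, U_R\rangle$ up to an absolute additive constant coming from the mean normalization. Since $S$ is positive of mass $1$ with support in the compact set $K\subset W$, it will suffice to prove the pointwise bound $|U_R(z)|\leq c(1+\log^+\norm{R}_{\infty,W})$ uniformly for $z\in K$, and then pair with $S$.

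For each $z\in K$, use the integral representation $U_R(z)=\int_\xi R(\xi)\wedge K(z,\xi)$ with the kernel of Proposition~\ref{prop:kernel}, and split the $\xi$-integration at a scale $r>0$ chosen small enough that the ball $B_r(z)$ sits inside $W$. On the exterior piece $\xi\in W^c$, the distance $dist(K,W^c)$ is positive, so $K(z,\cdot)$ is smoothly bounded and the mass bound on $R$ makes the contribution $O(1)$. On the near-diagonal ball $B_r(z)$, use the pointwise bound $R\leq M\omega^{k-p+1}$ with $M:=\norm{R}_{\infty,W}$ together with the singular kernel estimate $\norm{K}_\infty\lesssim dist^{2-2p}|\log dist|$; this produces a contribution of order $Mr^{2k-2p+2}|\log r|$. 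On the intermediate annular piece $W\setminus B_r(z)$, a similar estimate gives a contribution controlled by $M$ times an integral of the kernel away from the diagonal, which is bounded independently of $r$. Balancing at the scale $r\sim M^{-1/(2k)}$ --- the natural scale at which a uniform density-$M$ distribution of total mass $1$ would be concentrated --- produces the desired bound $c(1+\log^+M)$.

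The main obstacle is to make the intermediate estimate sharp enough to yield exactly $\log^+M$ rather than a polynomial factor when $p>1$: a crude sup estimate of $K$ on the intermediate region, combined only with the total mass of $R$, produces a term of order $r^{2-2p}|\log r|$, whose balance with the near term would give $M^{(p-1)/k}\log M$. To recover the logarithmic dependence one must exploit the closedness of $R$: passing to the symmetric representation $\cali{U}_S(R)=\langle U_S,R\rangle$, integrating by parts against a local quasi-potential of $R$ on $W$, and invoking the $L^r$-integrability ($r>1$) of the Green quasi-potential $U_S$ from Theorem~\ref{thm:Greenqpotential}. This yields a Chern--Levine--Nirenberg type inequality in which the $L^\infty$-bound on $R|_W$ enters only through its logarithm, closing the estimate.
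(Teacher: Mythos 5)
Your first approach — a direct pointwise estimate of the Green quasi-potential $U_R$ over $K$ — cannot produce a logarithmic bound, and you correctly sense this. With the kernel singularity $\norm{K}_\infty\lesssim dist^{2-2k}\abs{\log dist}$ (the exponent in Proposition~\ref{prop:kernel} should read $2-2k$, not $2-2p$; this is evidently a typographical slip, as the later use of $\nabla K\sim dist^{1-2k}$ confirms), the near-diagonal piece gives $\lesssim Mr^2\abs{\log r}$ and the intermediate annulus gives $O(M)$ (using $R\leq M$) or $O(r^{2-2k}\abs{\log r})$ (using the mass of $R$). No choice of $r$ reconciles these into $O(\log M)$: one always ends with a positive power of $M$. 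Your proposed remedy — pass to $\agl{U_S,R}$ and invoke the $\cali{L}^r$-bound of Theorem~\ref{thm:Greenqpotential} — also fails, and this is the substantive gap. That theorem only gives $\norm{U_S}_{\cali{L}^r}\leq c_r$ for $r<k/(k-1)$; by H\"older against $\norm{R}_{\cali{L}^{r'}(W)}\lesssim M^{1-1/r'}$ this yields at best $M^{1/k+\eps}$, again polynomial. The logarithmic gain in the $p=1$ case comes from the \emph{exponential} integrability of quasi-psh functions (Skoda-type estimates); for $p>1$ the Green quasi-potential is only a $(p-1,p-1)$-current with small-$r$ $\cali{L}^r$-coefficients, and no analogous uniform exponential or unbounded-$r$ growth is available, so a Chern--Levine--Nirenberg argument based on $\cali{L}^r$-bounds alone does not close.

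The idea you are missing is the kernel factorization already quoted in this paper (Lemma~\ref{lem:233} and the $K_M$, $K_M'$ decomposition preceding Lemma~\ref{lem:237}): up to a bounded form, $K\approx\eta\Theta$ where $\eta$ is a negative DSH function with $\eta-\log dist(\cdot,D)$ bounded and $\Theta$ is a closed Newton-type $(k-1,k-1)$-form. Truncate at level $L:=\log^+\norm{R}_{\infty,W}$ by writing $K\gtrsim K_L+K_L'$ with $K_L=-L\Theta$ and $K_L'=\eta_L\Theta$, $\eta_L=\min\set{0,L+\eta}$. The pairing $\agl{S,\int R\wedge K_L}$ is a pairing with a negative \emph{closed} current of mass $\simeq L$, hence is controlled cohomologically by $\lesssim L$. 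The pairing $\agl{S,\int R\wedge K_L'}$ lives (in $(z,\xi)$) on the region $\set{\eta<-L}\approx\set{dist(z,\xi)\lesssim e^{-L}}$; since $z\in\supp S\subset K$ and $e^{-L}\ll dist(K,W^c)$, one has $\xi\in W$ there and may use $R\leq M\omega^{k-p+1}$, giving $\lesssim M\int_0^{e^{-L}}\rho\abs{\log\rho}\,d\rho\lesssim MLe^{-2L}\lesssim 1$. This is where the hypothesis $\supp S\subset K\subset W$ enters essentially, and it explains the logarithmic form of the final bound. Without this decomposition — or an equivalent device that isolates the logarithmic factor $\eta$ from the Newton-type factor $\Theta$ — the direct approach cannot deliver the statement.
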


We prove the last claim for the proof of Proposition \ref{prop:mainthm2}.
\begin{lem}\label{lem:tail} Let $\eps>0$ be sufficiently small. Then,
\begin{displaymath}
d^{-n}\abs{\V_S(R_{n,n})}\lesssim nd^{-\frac{n}{4}}(-\log\eps).
\end{displaymath}
\end{lem}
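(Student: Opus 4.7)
The plan is to reduce the bound on $\mathcal{V}_S(R_{n,n})$ to a logarithmic bound on $\norm{R_{n,n}}_\infty$, and then exploit the explicit super-exponential form of the regularization parameter $\varepsilon_{n,n}$ together with the degree inequality $(40k^2\delta)^{8k}<d$ engineered at the start of Section~\ref{sec:settings}.

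First, I would invoke Lemma~\ref{lem:546}, which guarantees that $\mathcal{U}_S-\mathcal{V}_S$ is bounded by a constant independent of $S$. So it suffices to estimate $d^{-n}\lvert\mathcal{U}_S(R_{n,n})\rvert$ up to an additive $O(d^{-n})$ term. By construction $R_{n,n}$ is the $\varepsilon_{n,n}$-regularization of $\Lambda(R_{n,n-1})$; in particular it is a smooth (hence bounded) $(k-p+1,k-p+1)$-form on the compact manifold $\proj^k$. Therefore Proposition~\ref{prop:3210} applied with $W=K=\proj^k$ gives
\[
\lvert\mathcal{U}_S(R_{n,n})\rvert\le c\bigl(1+\log^+\norm{R_{n,n}}_{\infty}\bigr),
\]
for a constant $c>0$ independent of $S$. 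The point here is that it is the regularity of the test object $R_{n,n}$, not of $S$, that drives this inequality, so the (possibly very singular) $S$ causes no trouble.

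Second, I would control $\norm{R_{n,n}}_\infty$ via Proposition~\ref{prop:216}. An easy induction shows that the mass of $R_{n,i}$ is preserved at $1$ (both $\Lambda$, acting on positive closed $(k-p+1,k-p+1)$-currents, and the $\theta$-regularization preserve mass). Applying Proposition~\ref{prop:216} to the final regularization step then gives $\norm{R_{n,n}}_{\infty}\lesssim\varepsilon_{n,n}^{-2k^2-4k}$, whence
\[
\log^+\norm{R_{n,n}}_\infty\lesssim -\log\varepsilon_{n,n}= nC(2+N_2)(40k^2\delta)^{6kn}(-\log\varepsilon).
\]

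The payoff is then immediate. By the choice of iterate in Section~\ref{sec:settings} we have $(40k^2\delta)^{8k}<d$, so $(40k^2\delta)^{6kn}<d^{3n/4}$. Combining the pieces,
\[
d^{-n}\lvert\mathcal{V}_S(R_{n,n})\rvert\lesssim d^{-n}\bigl(1+\log^+\norm{R_{n,n}}_\infty\bigr)\lesssim d^{-n}\cdot n\cdot d^{3n/4}(-\log\varepsilon)= nd^{-n/4}(-\log\varepsilon),
\]
which is the claimed bound. I do not anticipate a genuine obstacle: the entire argument is an arithmetic assembly of three previously established ingredients (Lemma~\ref{lem:546}, Propositions~\ref{prop:3210} and~\ref{prop:216}) with the degree condition. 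The only point requiring a brief comment is that Proposition~\ref{prop:3210} remains applicable despite the singularity of $S$, which is handled by using the global smoothness of the regularized $R_{n,n}$ to take $W=\proj^k$.
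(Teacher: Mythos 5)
Your proof is correct and follows essentially the same route as the paper: bound $\norm{R_{n,n}}_\infty$ via Proposition~\ref{prop:216} and the explicit form of $\eps_{n,n}$, apply Proposition~\ref{prop:3210} with $W=K=\proj^k$, and use the degree condition $(40k^2\delta)^{8k}<d$ to absorb $(40k^2\delta)^{6kn}$ into $d^{3n/4}$. The only difference is that you explicitly invoke Lemma~\ref{lem:546} to pass from $\cali{V}_S$ to $\cali{U}_S$ before applying Proposition~\ref{prop:3210}, a small step which the paper leaves implicit by applying the proposition directly to the dynamical super-potential.
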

\begin{proof}
From Proposition ~\ref{prop:216}, we have $\norm{R_{n,n}}_\infty\lesssim{\eps_{n,n}}^{-4k^2}$. We apply Proposition ~\ref{prop:3210} to $W=K=\proj^k$, $\cali{U}=\cali{V}_S$ and $R=R_{n,n}$. Then,
\begin{eqnarray*}
d^{-n}\abs{\V_S(R_{n,n})}&\lesssim& d^{-n}\log\left({\eps_{n,n}}^{-4k^2}\right)\\
&=&d^{-n}\cdot[nC(2+N_4)(40k^2\delta)^{6kn}]\cdot 4k^2\cdot(-\log\eps)
\lesssim nd^{-\frac{n}{4}}(-\log\eps).
\end{eqnarray*} The last inequality is from our assumption on $\delta$ in Section ~\ref{sec:settings}.
This proves the lemma.
\end{proof}

\section{Examples and Remarks}\label{sec:exam}
The following example is a case where Theorem ~\ref{thm:mainthm} is applicable but Theorem ~\ref{thm:544} is not. We use a map considered by Forn{\ae}ss-Sibony in ~\cite{criticallyfinitemap}.

\begin{exam}\label{example}
Consider a holomorphic endmorphism $F:\proj^5\to\proj^5$ defined by
\begin{displaymath}
F([z:w:t:\alpha:\beta:\gamma])=[(z-2w)^2:z^2:(z-2t)^2:(\alpha-2\beta)^2:\alpha^2:(\alpha-2\gamma)^2].
\end{displaymath}
The critical set $\cali{C}_F$ of the map $F$ is $\set{z=2w}\cup\set{z=0}\cup\set{z=2t}\cup\set{\alpha=2\beta}\cup\set{\alpha=0}\cup\set{\alpha=2\gamma}$. This map is a post-critically finite map. Let $\cali{P}_F$ denote the post-critical set. We have the following orbits of the set of critical points of $F$:
\begin{eqnarray*}
&&\set{z=0}\to \set{w=0}\to \set{z=w}\circlearrowleft\\
&&\set{z=2w}\to \set{z=0}\to \set{w=0}\to \set{z=w}\circlearrowleft\\
&&\set{z=2t}\to \set{t=0}\to \set{w=t}\rightleftarrows \set{z=t}\\
&&\set{\alpha=0}\to \set{\beta=0}\to \set{\alpha=\beta}\circlearrowleft\\
&&\set{\alpha=2\beta}\to \set{\alpha=0}\to \set{\beta=0}\to \set{\alpha=\beta}\circlearrowleft\\
&&\set{\alpha=2\gamma}\to \set{\gamma=0}\to \set{\beta=\gamma}\rightleftarrows \set{\alpha=\gamma}
\end{eqnarray*}
Thus, $\cali{P}_F=\cali{C}_F\cup\set{w=0}\cup\set{t=0}\cup\set{z=w}\cup\set{w=t}\cup\set{z=t}\cup\set{\beta=0}\cup\set{\gamma=0}\cup\set{\alpha=\beta}\cup\set{\beta=\gamma}\cup\set{\alpha=\gamma}$ is a finite union of projective linear subspaces. We first consider $\mu'_-$. We claim that $\mu'_-\equiv 1$ and the set $E'_\mu$ is empty. The chain rule proves that the set of critical set of $F^n$ is $\cup_{i=0}^{n-1} F^{-i}(\cali{C}_F)$ and therefore, the set of the critical values of $F^n$ is $\cup_{i=1}^n F^i(\cali{C}_F)\subseteq \cali{P}_F$, which is bounded with respect to inclusion independently of $n\in\mathbb{N}$. This implies our claim.

Hence, we only consider $E_\lambda$. We explicitly compute $\mu_-$ for $F$. Note that $\set{[0: 0: t: 0: 0: \gamma]}$ is a complete invariant projective linear subspace for $F$. Therefore the multiplicity of $F^n$ at each point of $\set{[0: 0: t: 0: 0: \gamma]}$ is $16^n$, which means $\mu_-=16$ for the set.

Next, we claim that the forward orbit of each point outside $\set{z=0, w=0}\cup\set{\alpha=0, \beta=0}$ can visit $\cali{C}_F$ at most $4$ times. Observe that $\set{z=2w}$ and $\set{\alpha=2\beta}$ maps into $\set{z=0}$ and $\set{\alpha=0}$ under $F$, respectively. After the first visit to $\set{z=0}\cup\set{z=2t}\cup\set{\alpha=0}\cup\set{\alpha=2\gamma}$ in $\cali{C}_F$, the second visit to $\cali{C}_F$ should take place on the intersection of $\cali{C}_F$ and $\set{w=0}\cup\set{t=0}\cup\set{z=w}\cup\set{w=t}\cup\set{z=t}\cup\set{\beta=0}\cup\set{\gamma=0}\cup\set{\alpha=\beta}\cup\set{\beta=\gamma}\cup\set{\alpha=\gamma}$.
After the second visit to $\cali{C}_F$, the images of those intersection points afterwards are completely determined as follows:
\begin{eqnarray*}
&&\set{z=w=0}\circlearrowleft\\
&&\set{z=t=0}\to \set{w=t=0}\to \set{z=w=t}\circlearrowleft\\
&&\set{z=0, w=t}\to \set{z=t, w=0}\to \set{z=w=t}\circlearrowleft\\
&&\set{z=2w, t=0}\to \set{z=0, w=t}\to \set{z=t, w=0}\to \set{z=w=t}\circlearrowleft\\
&&\set{z=2w=2t}\to \set{z=t=0}\to \set{w=t=0}\to \set{z=w=t}\circlearrowleft\\
&&\set{z=2w=t}\to \set{z=0, w=t}\to \set{z=t, w=0}\to \set{z=w=t}\circlearrowleft\\
&&\set{z=2t, w=0}\to \set{z=w=4t}\circlearrowleft\\
&&\set{z=w=2t}\to \set{z=w, t=0}\to \set{z=w=t}\circlearrowleft
\end{eqnarray*}
and the same is true when $z,w,t$ are replaced by $\alpha, \beta, \gamma$.
These cases prove our claim. Therefore, the multiplicity of $F^n$ at a point outside $\set{z=0, w=0}\cup\set{\alpha=0, \beta=0}$ is uniformly bounded with respect to the choice of the point and $n\in\mathbb{N}$. This implies that $\mu_-=1$ for those points. In the same way, we obtain $\mu_-=4$ for the remaining case. Summarizing it, we have
\begin{displaymath}
\mu_-=\left\lbrace
\begin{array}{l}
16\textrm{ for }\set{[0: 0: t: 0: 0: \gamma]}\\
4\textrm{ for }[\set{z=0, w=0}\cup\set{\alpha=0, \beta=0}]\setminus\set{z=w=\alpha=\beta=0}\\
1\textrm{ elsewhere}.
\end{array}\right.
\end{displaymath}
It is not too difficult to see that $E=\set{z=0, w=0}\cup\set{\alpha=0, \beta=0}$. Consider the current of integration over the projective linear subspace defined by $\set{z=w=3t, \alpha=\beta=3\gamma}$. We normalize it by its mass and call the normalized one $S$. Note that $S$ is a positive closed $(4, 4)$-current. Theorem ~\ref{thm:mainthm} is applicable to $S$ in order to verify the convergence towards $T^4$. However, on $E$, the hypothesis of Lemma 5.4.5 in \cite{superpotentials} is violated. Indeed, it is clear that there is no $N\in\mathbb{N}$ such that $(20\cdot 5^2 4^N)^{8\cdot 5}<32^N$. Thus, we cannot use Theorem ~\ref{thm:544} to $S$. 
\end{exam}

\begin{rem}
By applying the same proof as in Theorem \ref{thm:mainthm} with minor changes, we can show that the speed of convergence in Corollary 5.5.6 in \cite{superpotentials} is exponential. Indeed, the hyperplane at infinity becomes $\Phi_1$ and the set $I_-$ becomes $E$.
\end{rem}

\begin{rem}
The convergence in Theorem \ref{thm:mainthm} is not uniform with respect to $S\in\cali{C}_p$ in general. It is rather pointwise convergence for $S\in\cali{C}_p$ smooth on $E$. Indeed, in the estimate in Section \ref{sec:2}, the sup-norm of $S$ near $E$ comes into the computations. However, if $f$ has as its $E$ the empty set, then the estimate in Section \ref{sec:2} does not appear. Then, the mass of $S$ only matters and the proof becomes the same as in Theorem 5.4.4 in \cite{superpotentials}. It means that for $f$ with the empty set as its $E$, we have uniform convergence with respect to $S\in\cali{C}_p$. The author would like to thank the referee for his comments on this uniformity issue.
\end{rem}
\newpage

\begin{thebibliography}{10}

\bibitem{BD}
Jean-Yves Briend and Julien Duval.
\newblock Deux caract\'{e}risations de la measure d'equilibre d'un
  endomorphisme de $p^k(\cplx)$.
\newblock {\em Publ. Math. Inst. Hautes \'{E}tudes Sci.}, 93:145--159, 2001.

\bibitem{HansBrolin}
Hans Brolin.
\newblock Invariant sets under iteration of rational functions.
\newblock {\em Ark. Mat.}, 6:103--144, 1965.

\bibitem{Demailly}
Jean-Pierre Demailly.
\newblock Complex analytic geometry.
\newblock available at www.fourier.ujf-grenoble.fr/$\sim$demailly.

\bibitem{analyticmulticocycle}
Tien-Cuong Dinh.
\newblock Analytic multiplicative cocycles over holomorphic dynamical systems.
\newblock {\em Complex Var. Elliptic Equ.}, 54:243--251, 2009.

\bibitem{DSHreference}
Tien-Cuong Dinh and Nessim Sibony.
\newblock Distribution des valeurs de transformations m\'{e}romorphes et
  applications.
\newblock {\em Comment. Math. Helv.}, 81:221--258, 2006.

\bibitem{pullbackbyholo}
Tien-Cuong Dinh and Nessim Sibony.
\newblock Pull-back of currents by holomorphic maps.
\newblock {\em Manuscripta Math.}, 123:357--371, 2007.

\bibitem{equidist_holo}
Tien-Cuong Dinh and Nessim Sibony.
\newblock Equidistribution towards the {G}reen current for holomorphic maps.
\newblock {\em Ann. Sci. \'{E}cole Norm. Sup.}, 41:307--336, 2008.

\bibitem{superpotentials}
Tien-Cuong Dinh and Nessim Sibony.
\newblock Super-potentials of currents, intersection theory and dynamics.
\newblock {\em Acta Math.}, 203:1--82, 2009.

\bibitem{dynamicsinscv}
Tien-Cuong Dinh and Nessim Sibony.
\newblock Dynamics in several complex variables: Endomorphisms of projective
  spaces and polynomial-like mappings.
\newblock In {\em Holomorphic Dynamical Systems, Cetraro, Italy, July 7-12,
  2008}, Lecture Notes in Mathematics, pages 165--294. Springer-Verlag Berlin
  Heidelberg, 2010.

\bibitem{equidistributionspeed}
Tien-Cuong Dinh and Nessim Sibony.
\newblock Equidistribution speed for endomorphisms of projective spaces.
\newblock {\em Math. Ann.}, 347:613--626, 2010.

\bibitem{favre}
Charles Favre.
\newblock {N}ote on pull-back and {L}elong number of currents.
\newblock {\em Bull. Soc. Math. France}, 127(3):445--458, 1999.

\bibitem{favre0}
Charles Favre.
\newblock {\em Dynamique des applications rationelles}.
\newblock PhD thesis, Universit\'{e} de Paris-Sud, Orsay, 2000.

\bibitem{favre1}
Charles Favre.
\newblock Multiplicity of holomorphic functions.
\newblock {\em Math. Ann.}, 316(2):355--378, 2000.

\bibitem{brolinthm}
Charles Favre and Mattias Jonsson.
\newblock {B}rolin's theorem for curves in two complex dimensions.
\newblock {\em Ann. Inst. Fourier (Grenoble)}, 53(5):1461--1501, 2003.

\bibitem{eigenvaluations}
Charles Favre and Mattias Jonsson.
\newblock Eigenvaluations.
\newblock {\em Ann. Sci. \'{E}cole Norm. Sup.}, 40(2):309--349, 2007.

\bibitem{federer}
Herbert Federer.
\newblock {\em Geometric Measure Theory}.
\newblock Springer-Verlag New York Inc., 1969.

\bibitem{criticallyfinitemap}
John~Erik Forn{\ae}ss and Nessim Sibony.
\newblock Critically finite rational maps on $\proj^2$.
\newblock {\em Contemporary Mathematics}, 137:245--260, 1992.

\bibitem{cplxpotentialtheory}
John~Erik Forn{\ae}ss and Nessim Sibony.
\newblock Complx dynamics in higher dimensions.
\newblock In {\em Complex potential theory (Montreal, PQ, 1993)}, volume 439,
  pages 131--186. Kluwer Acad. Publ., Dordrecht, 1994.

\bibitem{cplxdynII}
John~Erik Forn{\ae}ss and Nessim Sibony.
\newblock Complex dynamics in higher dimension {II}.
\newblock In {\em Modern methods in complex analysis (Princeton, NJ, 1992)},
  volume 137 of {\em Ann. Math. Stud.}, pages 135--182. Princeton University
  Press, 1995.

\bibitem{FLM}
Alexandre Freire, Artur Lopes, and Ricardo Ma\~{n}\'{e}.
\newblock An invariant measure for rational maps.
\newblock {\em Bol. Soc. Brasil. Mat.}, 14(1):45--62, •.

\bibitem{Gignac}
William Gignac.
\newblock Measures and dynamics on {N}oetherian spaces.
\newblock to appear in the Journal of Geometric Analysis, arXiv:1202.0793.

\bibitem{Guedj}
Vincent Guedj.
\newblock Equidistribution towards the {G}reen current.
\newblock {\em Bull. Soc. Math. France}, 131(3):359--372, 2003.

\bibitem{cplxanalvar}
Robert~C. Gunning.
\newblock {\em Lectures on Complex Analytic Varieties: the Local
  Parametrization Theorem}.
\newblock Preliminary Informal Notes of University Courses and Seminars in
  Mathematics. Princeton University Press, 1970.

\bibitem{lelong}
Pierre Lelong.
\newblock {\em Fonctions Plurisousharmoniques et Formes Diff\'{e}rentielles
  Positives}.
\newblock Dunod, 1968.

\bibitem{Lyubich}
Mikhail Ljubich.
\newblock Entropy properties of rational endomorphisms of the {R}iemann sphere.
\newblock {\em Ergodic Theory Dynam. Systems}, 3(3):351--385, 1983.

\bibitem{idealsofdiffftns}
Bernard Malgrange.
\newblock {\em Ideals of Differentiable Functions}.
\newblock Oxford University Press, 1966.

\bibitem{ueda}
S.~Morosawa, Y.~Nishimura, M.~Taniguchi, and T.~Ueda.
\newblock {\em Holomorphic Dynamics}.
\newblock Cambridge University Press, 2000.

\bibitem{Parra1}
Rodrigo Parra.
\newblock The {J}acobian cocycle and equidistribution towards the {G}reen
  current.
\newblock arXiv:1103.4633.

\bibitem{Parra}
Rodrigo Parra.
\newblock {\em Currents and equidistribution in holomorphic dynamics}.
\newblock PhD thesis, Univ. of Michigan, 2011.

\bibitem{RS}
Alexander Russakovskii and Bernard Shiffman.
\newblock Value distribution for sequences of rational mappings and complex
  dynamics.
\newblock {\em Indiana Univ. Math. J.}, 46(3):897--932, 1997.

\bibitem{sibony}
Nessim Sibony.
\newblock Dynamique des applications rationnelles de $\proj^k$.
\newblock {\em Panoramas $\&$ Synth\`{e}ses}, 8:97--185, 1999.

\bibitem{Taflin}
Johan Taflin.
\newblock Equidistribution speed towards the {G}reen current for endomorphisms
  of $\proj^k$.
\newblock {\em Adv. Math.}, 227(5):2059--2081, 2011.

\end{thebibliography}

\end{document}